
\documentclass{amsart}

% The following AMS packages are automatically loaded with
% the amsart documentclass:
\usepackage{amsmath}
\usepackage{amssymb}
\usepackage{amsthm}

\newcommand{\theoremref}[1]{\hyperref[#1]{Theorem~\ref*{#1}}}
\newcommand{\claimref}[1]{\hyperref[#1]{Claim~\ref*{#1}}}
\newcommand{\situationref}[1]{\hyperref[#1]{Situation~\ref*{#1}}}
\newcommand{\lemmaref}[1]{\hyperref[#1]{Lemma~\ref*{#1}}}
\newcommand{\definitionref}[1]{\hyperref[#1]{Definition~\ref*{#1}}}
\newcommand{\propositionref}[1]{\hyperref[#1]{Proposition~\ref*{#1}}}
\newcommand{\conjectureref}[1]{\hyperref[#1]{Conjecture~\ref*{#1}}}
\newcommand{\corollaryref}[1]{\hyperref[#1]{Corollary~\ref*{#1}}}
\newcommand{\exerciseref}[1]{\hyperref[#1]{Exercise~\ref*{#1}}}

\usepackage[mathscr]{euscript}

% For commutative diagrams you can use
% \usepackage{amscd}
\usepackage[all]{xy}

% We use 2cell for 2-commutative diagrams.
\xyoption{2cell}
\UseAllTwocells

% To put source file link in headers.
% Change "template.tex" to "this_filename.tex"
% \usepackage{fancyhdr}
% \pagestyle{fancy}
% \lhead{}
% \chead{}
% \rhead{Source file: \url{template.tex}}
% \lfoot{}
% \cfoot{\thepage}
% \rfoot{}
% \renewcommand{\headrulewidth}{0pt}
% \renewcommand{\footrulewidth}{0pt}
% \renewcommand{\headheight}{12pt}

\usepackage{multicol}

% For cross-file-references
\usepackage{xr-hyper}

% Package for hypertext links:
\usepackage{hyperref}
\usepackage[usenames,dvipsnames]{xcolor}
\hypersetup{colorlinks=true,citecolor=NavyBlue,linkcolor=BrickRed,urlcolor=Orange}

% Theorem environments.
%

\numberwithin{equation}{section}

\theoremstyle{plain}
\newtheorem{theorem}[equation]{Theorem}
\newtheorem{proposition}[equation]{Proposition}
\newtheorem{lemma}[equation]{Lemma}

\theoremstyle{definition}
\newtheorem{definition}[equation]{Definition}

\theoremstyle{remark}
\newtheorem{remark}[equation]{Remark}

\newtheorem{notation}[equation]{Notation}

% Macros

\usepackage{tikz}
\usepackage{tikz-cd}
%\tikzset{commutative diagrams/arrow style=math font}
\usetikzlibrary{matrix,arrows}
\usetikzlibrary{decorations.pathmorphing}

\renewcommand{\P}{\mathbb{P}}

\DeclareMathOperator{\Pic}{Pic}

\DeclareMathOperator{\Proj}{Proj}

\usepackage{tikz}
\usepackage{amscd}

\newcommand{\git}{\mathbin{
  \mathchoice{/\mkern-6mu/}% \displaystyle
    {/\mkern-6mu/}% \textstyle
    {/\mkern-5mu/}% \scriptstyle
    {/\mkern-5mu/}}}% \scriptscriptstyle

\def\Pic{\operatorname{Pic}}

%%%%%%%%%%%%%%%%%%%%%%%%%%%%%%%%%%%%%%%
\newcommand{\bfP}{\mathbf{P}}
\newcommand{\bP}{\mathbb{P}}

\newcommand{\bZ}{\mathbb{Z}}
\newcommand{\bQ}{\mathbb{Q}}

\newcommand{\bC}{\mathbb{C}}

\newcommand{\Aut}{\mathrm{Aut}}

\newcommand{\Res}{\mathrm{Res}}

\newcommand{\spec}{\mathrm{Spec}\;} 
\newcommand{\proj}{\mathrm{Proj}\;} 
\newcommand{\Bl}{\mathrm{Bl}}

%%% Groups
     %%% Deligne Torus

\newcommand{\SL}{\mathrm{SL}}

%%% Period Domains
 %%% Cx. Ball
 %%% Siegel upper half-space
\newcommand{\calD}{\mathcal{D}} %%% classifying space HS
\newcommand{\calH}{\mathcal{H}} %%% Hermitian symmetric domain

\newcommand{\calP}{\mathcal{P}}

%%% Line Bundles
\newcommand{\calO}{\mathcal{O}}
\newcommand{\calL}{\mathcal{L}}

 %%% Normal bundle
  %%% Tangent bundle

%%% Families of Varieties
\newcommand{\calC}{\mathcal{C}}

\newcommand{\calM}{\mathcal{M}}
\newcommand{\calY}{\mathcal{Y}}

\newcommand{\calU}{\mathcal{U}}

%%% VHS

%%% Lie Algebras

\begin{document}

\title[]{On the moduli space of pairs consisting of a cubic threefold and a hyperplane}
\author[]{Radu Laza} 
\address{Mathematics Department, Stony Brook University, Stony Brook, NY 11794-3651}
\email{radu.laza@stonybrook.edu}
\author[]{Gregory Pearlstein}
\address{Department of Mathematics, Texas A\&M University, College Station, TX 77843-3368}
\email{gpearl@math.tamu.edu}
\author[]{Zheng Zhang}
\address{Department of Mathematics, University of Colorado, Boulder, CO 80309-0395}
\email{zheng.zhang-2@coloradu.edu}
\date{\today}
\thanks{The authors acknowledge partial support from NSF Grants DMS-1254812, DMS-1361143 (Laza) and DMS-1361120 (Pearlstein and Zhang).}

\bibliographystyle{amsalpha}
\maketitle

\begin{abstract}
We study the moduli space of pairs $(X,H)$ consisting of a cubic threefold $X$ and a hyperplane $H$ in $\bP^4$. The interest in this moduli comes from two sources: the study of certain weighted hypersurfaces whose middle cohomology admit Hodge structures of $K3$ type and, on the other hand, the study of the singularity $O_{16}$ (the cone over a cubic surface). In this paper, we give a Hodge theoretic construction of the moduli space of cubic pairs by relating $(X,H)$ to certain ``lattice polarized" cubic fourfolds $Y$. A period map for the pairs $(X,H)$ is then defined using the periods of the cubic fourfolds $Y$. The main result is that the period map induces an isomorphism between a GIT model for the pairs $(X,H)$ and the Baily-Borel compactification of some locally symmetric domain of type IV. 
\end{abstract}

\section*{Introduction}
It is an interesting problem to study (weighted) hypersurfaces whose middle cohomology admit Hodge structures of $K3$ type (a Hodge structure is of $K3$ type if it is an effective weight $2$ Hodge structure with $h^{2,0}=1$) up to Tate twist. In 1979, Reid listed all $95$ families of $K3$ weighted hypersurfaces (see for example \cite{Reid_weightedk3}). The next case is to consider quasi-smooth hypersurfaces of degree $d$ in a weighted projective space $\bP(w_0,w_1,w_2,w_3,w_4,w_5)$. By Griffiths residue calculus, if $d = \frac12(w_0+\dots+w_5)$ then the middle cohomology of the weighted fourfolds are of $K3$ type (see Appendix \ref{wps} for more discussions on $K3$ type varieties). In particular, when $w_0 = \dots = w_5 =1$ one obtains cubic fourfolds. These weighted Fano hypersurfaces are of interest to us because they might be used to construct hyper-K\"ahler manifolds. %(roughly, we hope to construct a holomorphic symplectic form using the unique $(3,1)$-form). 
For instance, cubic fourfolds have been a rich source for producing hyper-K\"ahler manifolds (e.g. \cite{BD_fano}, \cite{LLSvS_twistcubic} and \cite{LSV_hyperkahler}). 

Families of $K3$ type weighted fourfolds which contain a Fermat member can be easily classified (c.f. \theoremref{quasi-k3-4fold} and Table \ref{wps-table}). There are $17$ cases most of which have essentially appeared in Reid's list. In this paper we consider one of the new cases, namely, weighted degree $6$ hypersurfaces in $\bP(1,2,2,2,2,3)$. A quasi-smooth hypersurface $Z$ of degree $6$ in $\bP(1,2,2,2,2,3)$ is a double cover of $\bP^4$ branched along a smooth cubic threefold $X$ and a hyperplane $H$. The isomorphism class of $Z$ is determined by the projective equivalence class of the branching data $X$ and $H$. Thus, we are interested in the moduli space of pairs $(X, H)$ where $X$ is a cubic threefold and $H$ is a hyperplane in $\bP^4$. This moduli space is also interesting from the perspective of singularity theory. By the theory of Pinkham \cite{Pinkham_Gm} the study of deformations of the singularity $O_{16}$ (which comes from the affine cone over a cubic surface) is essentially reduced to the study of the moduli of cubic pairs $(X,H)$. The strategy has been successfully applied to unimodal singularities by Pinkham, Looijenga and Brieskorn, and to the minimally elliptic surface singularity $N_{16}$ by the first author (see \cite{Laza_n16} and references therein). The singularity $O_{16}$ is the simplest genuinely threefold singularity (i.e. not obtained by suspending a surface singularity). The understanding of the deformations of $O_{16}$ is essential to any attempt of studying deformations of higher modality singularities. Note also that there is a close relation between the deformations of $N_{16}$ and $O_{16}$. We will discuss more about $O_{16}$ elsewhere. 

We study the moduli of pairs $(X,H)$ consisting of a cubic threefold $X$ and a hyperplane $H$ via a period map. To construct periods for pairs $(X,H)$, we use a variation of the construction by Allcock, Carlson and Toledo \cite{ACT_cubicsurf, ACT_cubic3} which allows us to encode a pair $(X,H)$ as a cubic fourfold $Y$. A cubic fourfold $Y$ coming from a cubic pair $(X,H)$ is characterized by the geometric property that it admits an involution fixing a hyperplane section, or equivalently, it has an Eckardt point (i.e. $Y$ contains a cone over a cubic surface as a hyperplane section and we call the vertex an Eckardt point). Note that our construction works for cubic pairs of any dimension. A smooth cubic fourfold $Y$ admitting an Eckardt point contains at least $27$ planes (generated by the $27$ lines on the cubic surface and the Eckardt point). The Hodge classes corresponding to these planes generate a saturated sublattice $M \subset H^4(Y, \bZ) \cap H^{2,2}(Y)$. In addition to the hyperplane class, the lattice $M$ contains a (scaled) $E_6$ lattice induced from the primitive cohomology of the cubic surface $X \cap H$. The cubic fourfolds $Y$ are characterized Hodge theoretically as ``$M$-polarized" cubic fourfolds (analogous to lattice polarized $K3$ surfaces defined by Dolgachev \cite{Dolgachev_latticek3}). 

\begin{remark}
After the completion of this work, we have learned that $M$-polarized cubic fourfolds as above (or equivalently cubics with an Eckardt point) are an interesting testing ground for various natural rationality conjectures for cubic fourfolds (see \cite{Laza_maxirrat}).
\end{remark}

Using periods of the cubic fourfolds $Y$ we get a period map $\calP_0$ for cubic pairs $(X,H)$. It is well known that the periods of cubic fourfolds behave similarly to the periods of $K3$ surfaces. The Torelli theorem for cubic fourfolds was proven by Voisin \cite{Voisin_cubic}, ``lattice polarized" cubic fourfolds were used by Hassett \cite{Hassett_cubic}, and the image of the period map was described by Looijenga \cite{Looijenga_cubic} and the first author \cite{Laza_cubicgit, Laza_cubic}. Using Voisin's Torelli theorem and lattice theory, we first prove that the moduli space of cubic pairs $(X,H)$ is birational to a certain locally symmetric domain of type IV. More precisely, we let $T=M_{H^4(Y, \bZ)}^{\perp}$. Denote a connected component of the period domain for weight $4$ Hodge structures on $T$ with Hodge numbers $[0,1,14,1,0]$ by $\calD_M$. Also set $O^+(T) \subset O(T)$ to be the group of isometries of $T$ stabilizing $\calD_M$.

\begin{theorem}[= \theoremref{P0 injective}] \label{mainthm1}
Let $\calM_0$ be the moduli of pairs $(X,H)$ consisting of a cubic threefold $X$ and a hyperplane $H$ in $\bP^4$ such that the associated cubic fourfold $Y$ is smooth. The period map $\calP_0: \calM_0 \rightarrow \calD_M/O^+(T)$ defined via periods of the cubic fourfolds $Y$ is an isomorphism onto the image.
%Moreover, the image of $\calP_0$ is the complement of a union of two Heegner divisors in $\calD_M/O^+(T)$.  
\end{theorem}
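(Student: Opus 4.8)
The plan is to reduce the statement to Voisin's global Torelli theorem for cubic fourfolds \cite{Voisin_cubic} together with lattice theory, exploiting the fact that $\calP_0$ factors through the assignment $(X,H) \mapsto Y$. Since $\calM_0$ and $\calD_M/O^+(T)$ are both $14$-dimensional (the cubics-plus-hyperplane count $34+4-24=14$ on the source side, and $\dim \calD_M = n = 14$ for the signature $(2,14)$ lattice $T$ of rank $16$ on the target side), it suffices to prove two things: that $\calP_0$ is injective, and that it is a local isomorphism. An injective local isomorphism of complex analytic spaces of the same dimension is an open immersion, hence an isomorphism onto its image.

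For injectivity, suppose $(X,H)$ and $(X',H')$ have smooth associated cubic fourfolds $Y,Y'$ with $\calP_0(X,H)=\calP_0(X',H')$. By definition of the quotient $\calD_M/O^+(T)$, after choosing markings this produces an element $g \in O^+(T)$ that is a Hodge isometry between the Hodge structures on $T \subset H^4(Y,\bZ)$ and $T \subset H^4(Y',\bZ)$. The first step is to extend $g$ to a Hodge isometry $\tilde g \colon H^4(Y,\bZ) \to H^4(Y',\bZ)$ carrying $M$ to $M$. Because $M$ consists entirely of $(2,2)$-classes, \emph{any} isometry of $M$ is automatically a Hodge isometry, so I have complete freedom on the $M$-factor; the only obstruction to gluing $g$ with a suitable isometry of $M$ into an isometry of the overlattice $H^4(Y,\bZ)$ lives in the discriminant groups, and I would discharge it via Nikulin's theory of discriminant forms, concretely by checking that $O(M) \to O(A_M)$ is surjective so that the action of $g$ on $A_T \cong A_M$ can be matched. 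The resulting $\tilde g$ fixes the square of the hyperplane class $h^2 \in M$, hence respects the polarization, so Voisin's theorem produces an isomorphism $f \colon Y \xrightarrow{\sim} Y'$ with $f^* = \tilde g$.

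It remains to see that $f$ identifies the two pairs. Here I would use that $(X,H)$ is encoded in $Y$ by the Eckardt point, equivalently by the involution $\iota_Y$ whose invariant and anti-invariant sublattices are commensurable with $M$ and $T$ respectively. Since $\tilde g = f^*$ respects the splitting $M \oplus T$, the pullback $f$ conjugates $\iota_Y$ to $\iota_{Y'}$ and hence carries the Eckardt point of $Y$ to that of $Y'$; restricting to the fixed hyperplane sections and their vertices then recovers an isomorphism $(X,H) \cong (X',H')$. A point requiring genuine care is the possibility that $Y$ carries more than one Eckardt point, which could a priori yield distinct pairs with the same period; I would rule this out by showing that $\iota_Y$ is determined purely Hodge-theoretically by the sublattice $M$, so that any remaining ambiguity is absorbed by an automorphism of $Y$ inducing an isomorphism of pairs.

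Finally, for the local isomorphism property I would invoke infinitesimal Torelli for cubic fourfolds: the period map for cubics is a local isomorphism onto the type IV period domain, and restricting to the $M$-polarized locus together with Griffiths transversality shows that $d\calP_0$ is injective, hence an isomorphism between the $14$-dimensional tangent spaces. I expect the main obstacle to be injectivity, and within it the interplay between the lattice-theoretic extension of $g$ across $M$ and the geometric recovery of the pair from the Eckardt structure of $Y$. The cleanest route is to pin down $\iota_Y$ via the splitting $M \oplus T$; once that characterization is in place, both the extension step and the uniqueness-of-Eckardt-point issue become transparent.
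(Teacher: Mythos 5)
Your overall architecture (extend the isometry across $M$, apply Torelli for cubic fourfolds, recover the pair from the involution, finish with local Torelli) is the paper's, but two of your steps have genuine gaps. The first is in the gluing step. You propose to match the action of $g$ on $A_T \cong A_M$ by an element of $O(M)$, justified by ``checking that $O(M) \to O(A_M)$ is surjective,'' and then assert that ``the resulting $\tilde g$ fixes the square of the hyperplane class $h^2$.'' That last assertion is a non sequitur: if $g_M \in O(M)$ is an arbitrary isometry matching the discriminant action, then $\tilde g(h^2)=g_M(h^2)$, which need not equal $h^2$, and without $\tilde g(h^2)=h^2$ you cannot invoke the (polarized) Torelli theorem at all. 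What is actually needed is that the image of the \emph{stabilizer} $O(M)_{h^2}$ under $O(M)\to O(b_M)$ contains the image of $O^+(T) \to O(b_T)\cong O(b_M)$; this is precisely the content of \lemmaref{Me6} (the image of $O(M)_{h^2}$ is $O(q_T)$, and $O(M)_{h^2}\cong W(E_6)$), a nontrivial computation with the $27$ lines and the $E_6$ root system --- note also that $M$ is odd, so the matching must be carried out at the level of discriminant \emph{bilinear} forms, with $O(q_T)\subset O(b_T)$ entering exactly because $T$ is even. Surjectivity of $O(M)\to O(A_M)$, even if true, does not give this.

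The second gap is your use of ``Voisin's theorem produces an isomorphism $f$ with $f^*=\tilde g$.'' The theorem as stated (\theoremref{Voisin torelli}) is the weak form: the period map $\calC_0 \to \calD/\Gamma^+$ is an open immersion, so one obtains \emph{some} isomorphism $f\colon Y \to Y'$, with no control on $f^*$. Your next step --- that $f$ conjugates $\iota_Y$ to $\iota_{Y'}$ --- requires $f^*$ to respect the $M\oplus T$ splitting, so you are implicitly invoking the strong Torelli theorem ($f$ inducing a prescribed Hodge isometry), together with faithfulness of $\Aut(Y)\to O(H^4(Y,\bZ))$, which you acknowledge but defer. Both statements are true and citable, but neither is available from the paper's stated input, and for special $Y$ (algebraic lattice strictly larger than $M$) the weak form genuinely does not suffice to force $f^*$ to preserve $M$. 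The paper sidesteps both issues with a more elementary device: for generic $Y$ in this family the Eckardt point is unique (by the cited classification of star points), so \emph{any} isomorphism $\beta\colon Y\to Y'$ must carry $p$ to $p'$, hence conjugates the projection involutions and matches the branch data $X+H$ with $X'+H'$; this yields \emph{generic} injectivity, which combined with the local isomorphism of \propositionref{dP0 injective} already gives an isomorphism onto the image. If you want injectivity at every point, as your write-up claims, you must either supply the strong Torelli citation and the faithfulness argument, or retreat to the paper's generic statement.
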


Next we consider the problem of compactifying the period map $\calP_0$. There is a natural compactification for the moduli of cubic pairs $(X,H)$ using geometric invariant theory (GIT). Specifically, we take the GIT quotient of $ \bP H^0(\bP^4, \calO(3)) \times \bP H^0(\bP^4, \calO(1))$ with respect to the action of $\SL(5, \bC)$. Note that our GIT construction depends on a parameter $t$ which corresponds to the choice of a linearization (see for example \cite{Laza_n16}). Write $\calM(t):=\bP H^0(\bP^4, \calO(3)) \times \bP H^0(\bP^4, \calO(1)) \git_t \SL(5,\bC)$. The natural question for us is how does $\calM(t)$ compare to the Baily-Borel compactification of $\calD_M/O^+(T)$ (for some related examples see \cite{Shah_deg2k3}, \cite{LS_cubic3fold}, \cite{Looijenga_cubic}, \cite{Laza_cubic} and \cite{ACT_cubic3}). 
Note that $\calM(t)$ is not empty when $0 \leq t \leq \frac34$ (as $t$ increases, $X$ becomes more singular while transversality for $X \cap H$ becomes better). Also, $\calM(t)$ and $\calM(t')$ only differ in codimension $2$ for $t \neq t'$. Our second result is that the GIT moduli $\calM(\frac13)$ at $t=\frac13$ is isomorphic to the Baily-Borel compactification of $\calD_M/O^+(T)$. 

\begin{theorem}[= \theoremref{gitbb}] \label{mainthm2}
The period map $\calP_0: \calM_0 \rightarrow \calD_M/O^+(T)$ extends to an isomorphism $\calM(\frac13) \cong (\calD_M/O^+(T))^*$ where $(\calD_M/O^+(T))^*$ denotes the Baily-Borel compactification of $\calD_M/O^+(T)$. 
\end{theorem}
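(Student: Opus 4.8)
The plan is to upgrade the isomorphism of \theoremref{mainthm1} onto its image to an isomorphism of compactifications by a two-sided comparison: first identify the image of $\calP_0$ and establish surjectivity of the extended map, then prove the extended map is a morphism and conclude it is an isomorphism. Since $\calM_0$ is isomorphic onto its image in $\calD_M/O^+(T)$, and both sides are normal quasi-projective varieties of the same dimension, the essential content is matching up the boundaries. I would begin by recording the structure of the GIT quotient $\calM(t)$ near $t=\tfrac13$: the choice $t=\tfrac13$ is the distinguished wall at which the linearization is symmetric with respect to the construction associating $(X,H)$ to the cubic fourfold $Y$, so that GIT-(semi)stability of the pair matches the GIT-(semi)stability of $Y$ as a cubic fourfold (whose analysis is due to the first author and Looijenga). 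I would make this matching precise by comparing the Hilbert--Mumford numerical criterion for the $\SL(5)$-action on $\bP H^0(\bP^4,\calO(3))\times\bP H^0(\bP^4,\calO(1))$ at $t=\tfrac13$ with the criterion for the $\SL(6)$-action on cubic fourfolds restricted to the locus of cubics with an Eckardt point.

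The first main step is to show that $\calP_0$ extends to a morphism $\calP:\calM(\tfrac13)\to(\calD_M/O^+(T))^*$. For this I would invoke the extension theorem for period maps on locally symmetric varieties of type IV (in the style of Borel's extension theorem together with the removal-of-singularities results of Looijenga and the first author for cubic fourfolds): the period map on the open stratum is defined and algebraic, and because the target is a Baily--Borel compactification, any locally liftable map with the appropriate growth extends across the boundary. The content here is to check that the boundary strata of $\calM(\tfrac13)$ parametrizing the strictly semistable and singular pairs are sent to the cusps and the lower-dimensional boundary components of $(\calD_M/O^+(T))^*$; concretely this is a limiting-Hodge-structure computation for the degenerating cubic fourfolds $Y$ attached to the boundary pairs $(X,H)$.

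The second main step is surjectivity together with injectivity of the extended map. Surjectivity onto the interior is \theoremref{mainthm1}; I would extend it to the boundary by showing every cusp of $(\calD_M/O^+(T))^*$ is the image of a boundary point, matching the combinatorics of rational boundary components (totally isotropic sublattices of $T$ up to $O^+(T)$) with the GIT boundary strata of $\calM(\tfrac13)$. Injectivity on the boundary follows from the analysis of limit mixed Hodge structures, which distinguishes inequivalent boundary pairs by their associated nilpotent orbits. Finally, since $\calP$ is a bijective morphism between normal projective varieties and the target is normal (being a Baily--Borel compactification), Zariski's main theorem upgrades it to an isomorphism.

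The hard part will be the boundary matching in both the extension and surjectivity steps: one must simultaneously control the GIT stability analysis at the precise value $t=\tfrac13$ and the Hodge-theoretic degenerations, and verify that these two stratifications are in bijection in a way compatible with $\calP_0$. In particular, confirming that no boundary component is lost or split --- i.e. that the discriminant and the strictly semistable locus on the GIT side map exactly onto the union of boundary divisors and cusps on the Hodge-theoretic side --- is where the choice $t=\tfrac13$ is genuinely used, and is the step most likely to require delicate case analysis. I expect this to draw essentially on the parallel results already established for cubic fourfolds by Looijenga and the first author, transported across the Eckardt-point construction.
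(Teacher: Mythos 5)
Your proposal has two genuine gaps, and it also misses the mechanism by which the paper actually pins down $t=\tfrac13$.

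First, the claim ``surjectivity onto the interior is \theoremref{mainthm1}'' is false. \theoremref{mainthm1} only says $\calP_0$ is an isomorphism onto its \emph{image}, and that image is a proper open subset of $\calD_M/O^+(T)$: the complement of the two Heegner divisors $H_n\cup H_t$. Establishing that the rest of the interior is hit (by nodal pairs and tangent pairs) is real work: one needs the Looijenga--Laza description of the image of the period map for cubic fourfolds (complement of $\calH_\infty\cup\calH_\Delta$), the lattice computation that $\calH_\infty\cap\calD_M=\emptyset$ (the paper's \lemmaref{H_infty empty}), the identification of $\calH_\Delta\cap\calD_M$ with $H_n\cup H_t$ via limit mixed Hodge structures (\propositionref{geoHeegner}), and the converse statement that any $M$-polarized Hodge structure comes from a cubic with an Eckardt point (\propositionref{Hodge Eckardt}). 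None of this appears in your outline, and without it your bijectivity-plus-Zariski's-main-theorem strategy has nothing to run on. Second, your extension step over the full GIT boundary is not automatic: Borel-type extension requires a smooth source (or a resolution, with a descent argument you do not supply), and in closely analogous situations (degree $2$ $K3$ surfaces, cubic surfaces) the GIT quotient is \emph{not} isomorphic to the Baily--Borel compactification, so a morphism $\calM(t)\to(\calD_M/O^+(T))^*$ extending $\calP_0$ simply does not exist for general $t$. That it exists at $t=\tfrac13$ is a consequence of the theorem, not an available input.

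The paper's route avoids your boundary-matching program entirely, following Looijenga's framework: it restricts to the open locus $\calM$ of pairs with $X$ at worst nodal and $H$ at worst simply tangent (stable for all $t\in(0,\tfrac34)$), extends $\calP_0$ there by finite-monodromy/removable-singularity arguments, and shows both $\calM\subset\calM(t)$ and $\calP(\calM)\subset\calD_M/O^+(T)$ have complements of codimension at least $2$. Then both compactifications are recovered as $\Proj$ of rings of sections, so it suffices to match polarizations on these open sets: the Borcherds relation $\lambda(O^+(T))\sim H_n+2H_t$ (\propositionref{BorcherdsO+}), combined with Benoist's divisor classes $\Sigma=\calO(80,0)$ and $V=\calO(32,24)$ in $\Pic(\bfP)$, shows $\lambda$ pulls back to $\calO(144,48)$, of slope exactly $\tfrac13$. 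This — not any matching of Hilbert--Mumford criteria between pairs and cubic fourfolds, which you propose but which would at best be heuristic — is what singles out $t=\tfrac13$, and it simultaneously yields the isomorphism of graded section rings, with no analysis of cusps, strictly semistable orbits, or nilpotent orbits needed.
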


To prove this theorem, we apply a general framework developed by Looijenga \cite{Looijenga_ball, Looijenga_typeiv} (see also \cite{Looijenga_vancouver} and \cite{LS_openperiod}) for comparing GIT compactifications to appropriate compactifications of period spaces. Specifically, we consider cubic pairs $(X,H)$ with $X$ at worst nodal and $H$ at worst simply tangent to $X$. Such pairs are stable for any $0 < t < \frac34$. Denote the corresponding moduli by $\calM$ (N.B. $\calM_0 \subset \calM$ and the complement of $\calM$ in $\calM(t)$ has codimension at least $2$). The period map $\calP_0$ extends to $\calP: \calM \rightarrow \calD_M/O^+(T)$. Using the work of Looijenga \cite{Looijenga_cubic} and the first author \cite{Laza_cubicgit, Laza_cubic}, we are able to show that the complement of the image $\calP(\calM)$ in $\calD_M/O^+(T)$ has codimension at least $2$. Indeed, the image of the period map for smooth cubic fourfolds is the complement of a union of the hyperplane arrangements $\calH_{\Delta}$ (which corresponds to the limiting mixed Hodge structures of cubic fourfolds with simple (A-D-E) singularities, see \cite[\S 4.2]{Hassett_cubic}) and $\calH_{\infty}$ (which parameterizes the limiting mixed Hodge structures of certain determinantal cubic fourfolds, see \cite[\S 4.4]{Hassett_cubic}). The hyperplanes in $\calH_{\infty}$ do not intersect with the subdomain $\calD_M$. The intersection of the hyperplane arrangement $\calH_{\Delta}$ with $\calD_M$ gives two Heegner divisors $H_n$ (the nodal Heegner divisor) and $H_t$ (the tangential Heegner divisor) in $\calD_M/O^+(T)$. Geometrically, they correspond respectively to the divisors in $\calM$ parameterizing pairs $(X,H)$ with $X$ nodal or $H$ simply tangent to $X$. To complete the proof, we need to select  $t \in (0,\frac34)$ such that the natural polarization on the GIT quotient $\calM(t)$ matches with the polarization on the Baily-Borel compactification $(\calD_M/O^+(T))^*$ (recall that $\calM$ is an open subset with boundary of codimension $\ge 2$ in both). The polarization on the period domain side is given by the Hodge bundle $\lambda(O^+(T))$. Since the Picard rank of $\calD_M/O^+(T)$ is $2$, all we need to do is to express $\lambda(O^+(T))$ as a linear combination of the Heegner divisors $H_n$ and $H_t$ (and then pull it back to the moduli side). This is by now standard, we call it a Borcherds' relation, and follows by similar computations to those in \cite{LOG_quartick3} (see also \cite{Kondo_kodaira2}, \cite{GHS_kodaira} and \cite{TV-A_kodaira}).  

\begin{remark}
Consider $2$-dimensional cubic pairs $(S,D)$ consisting of a cubic surface $S$ and a hyperplane $D$. Following \cite{ACT_cubicsurf} we associate a cubic threefold $X$ (which is the triple cyclic cover of $\bP^3$ branched along $S$ and hence admits an automorphism of order $3$) to $S$. We can also view $D$ as a hyperplane in $\bP^4$. Our construction in this paper for $(X, D)$ seems to give a complex ball uniformization for the moduli space of $(S,D)$. Specifically, the associated cubic fourfold $Y$ is not only ``lattice polarized" but also admits an automorphism of order $3$; the corresponding Mumford-Tate subdomain is a $7$-dimensional ball. 
\end{remark}

\begin{remark}
We point out that after the appearance of our manuscript, Chenglong Yu and Zhiwei Zheng \cite{YZ_symcubic} have made a systematic study of the moduli space of symmetric cubic fourfolds (i.e. cubics with specified automorphism group). Since cubics with Eckardt points can be characterized as cubics admitting an involution that fixes a hyperplane, they fit into the loc. cit. framework. While some overlap between our results and those of  Yu--Zheng exist (see esp.  \cite[Prop. 6.5]{YZ_symcubic}), the focus of the two papers is essentially complementary. 
\end{remark}

\subsection*{Acknowledgement} The first author wishes to thank I. Dolgachev, R. Friedman, E. Looijenga and K. O'Grady for some discussions on topics related to this paper. The second author and the third author are grateful to E. Izadi for many valuable suggestions. We would also like to thank P. Gallardo, B. Hassett, Zhiyuan Li, J. Martinez-Garcia, Zhiyu Tian and A. V\'arilly-Alvarado for useful discussions. 

\section{Associate a cubic fourfold to a cubic threefold and a hyperplane} \label{associated cubic4}
In this section we construct a smooth cubic fourfold $Y$ from a pair $(X,H)$ consisting of a smooth cubic threefold $X$ and a hyperplane $H$ intersecting $X$ transversely. Moreover, we characterize the cubic fourfold $Y$ geometrically by the fact that it contains a cone over a smooth cubic surface (more precisely, $Y$ contains an Eckardt point) and admits a certain involution. The construction is inspired by the work of Allcock, Carlson and Toledo \cite{ACT_cubic3}. Given a smooth cubic threefold in $\bP^3$ they consider the triple cyclic cover of $\bP^4$ branched along the cubic threefold. In our case we take the double cover $Z$ of $\bP^4$ branched over $X + H$. Note that $Z$ is isomorphic to a quasi-smooth hypersurface of degree $6$ in $\bP(1,2,2,2,2,3)$. The cubic fourfold $Y$ is obtained as a birational modification of the double cover $Z$ (blow up the ramification locus over $X \cap H$ and then blow down the strict transform of the preimage of $H$ in $Z$). Because our construction works for cubic pairs of any dimension, we shall present it in that generality. 

Let $X \subset \bP^{n}$ ($n \geq 3$) be a smooth cubic $(n-1)$-fold cut out by a cubic homogeneous polynomial $f(y_0, \dots, y_n) = 0$. Let $H$ be a hyperplane intersecting $X$ transversely. Assume the equation of $H$ is $l(y_0, \dots, y_n) = 0$. Note that the intersection $X \cap H$ is a smooth cubic $(n-2)$-fold in $H \cong \bP^{n-1}$. We consider the cubic $n$-fold $Y \subset \bP^{n+1}$ defined by 
\begin{equation} \label{Y eqn}
f(y_0, \dots, y_n) + l(y_0, \dots, y_n)y_{n+1}^2 = 0
\end{equation}
where $y_{n+1}$ is a new variable. (In what follows we view $X$ and $H$ as subvarieties of the hyperplane $(y_{n+1}=0) \cong \bP^n$.)

\begin{remark}
Geometrically, the cubic $n$-fold $Y$ can be constructed as follows (see \propositionref{Y projection}). Take the double cover $Z$ of $\bP^n$ ramified over the union $X+H$. After blowing up $Z$ along the reduced inverse image of $X \cap H$ and then blowing down the strict transform of the inverse image of $H$ in $Z$, we obtain $Y$.
\end{remark}

\begin{lemma} \label{Y smooth}
Notations as above. The cubic $n$-fold $Y$ defined by $f(y_0, \dots, y_n) + l(y_0, \dots, y_n)y_{n+1}^2 = 0$ is smooth.
\end{lemma}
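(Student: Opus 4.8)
The plan is to verify the Jacobian criterion directly for the homogeneous polynomial
\[
F = f(y_0,\dots,y_n) + l(y_0,\dots,y_n)\,y_{n+1}^2
\]
on $\bP^{n+1}$. A point $p=[y_0:\cdots:y_{n+1}]$ is singular on $Y$ precisely when all partials of $F$ vanish at $p$ (the equation $F(p)=0$ then follows from Euler's relation, $F$ being homogeneous). Computing,
\[
\frac{\partial F}{\partial y_i} = \frac{\partial f}{\partial y_i} + \frac{\partial l}{\partial y_i}\,y_{n+1}^2 \quad (0\le i\le n), \qquad \frac{\partial F}{\partial y_{n+1}} = 2\,l\,y_{n+1},
\]
where each $\partial l/\partial y_i$ is the constant coefficient of $y_i$ in the linear form $l$. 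The argument then splits according to whether the new coordinate $y_{n+1}$ vanishes at $p$.

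If $y_{n+1}=0$, then necessarily $(y_0,\dots,y_n)\ne 0$, and the vanishing of the first $n+1$ partials reduces to $\partial f/\partial y_i(y_0,\dots,y_n)=0$ for all $i$. By Euler's relation for the cubic $f$ this forces $f(y_0,\dots,y_n)=0$ as well, so $q:=[y_0:\cdots:y_n]$ would be a singular point of $X=\{f=0\}$, contradicting the smoothness of $X$.

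If instead $y_{n+1}\ne 0$, then $\partial F/\partial y_{n+1}=0$ forces $l(y_0,\dots,y_n)=0$, and the remaining equations read $\partial f/\partial y_i(q) = -y_{n+1}^2\,\partial l/\partial y_i$ for $0\le i\le n$. One first checks $(y_0,\dots,y_n)\ne 0$: otherwise $p=[0:\cdots:0:1]$ and the equations give $\partial l/\partial y_i=0$ for all $i$, i.e. $l\equiv 0$, which is impossible for a hyperplane. Thus $q$ is a genuine point of $\bP^n$. Feeding the gradient identity into Euler's relation for $f$ and using Euler's relation for $l$ together with $l(q)=0$ yields $3f(q) = -y_{n+1}^2\,l(q)=0$, so $q\in X\cap H$. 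But the displayed identity says $\nabla f(q) = -y_{n+1}^2\,\nabla l$ is a nonzero scalar multiple of $\nabla l$, i.e. the differentials $df$ and $dl$ are linearly dependent at $q$. This contradicts the transversality of $X$ and $H$ along $X\cap H$, which is exactly the statement that $df$ and $dl$ are everywhere independent on $X\cap H$. Hence $Y$ has no singular points.

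The computation itself is routine; the step carrying the geometric content, and the one to set up carefully, is the case $y_{n+1}\ne 0$. There the two hypotheses conspire: the Euler relations are what place the candidate singular point on $X\cap H$, and only then does the gradient-proportionality relation $\nabla f(q)\parallel\nabla l$ collide with the transversality assumption. I would state transversality precisely as the linear independence of $df$ and $dl$ at each point of $X\cap H$ (equivalently $T_qX\ne T_qH$), so that the contradiction is immediate. The harmless but easily overlooked point is excluding the vertex $[0:\cdots:0:1]$ before passing to the point $q\in\bP^n$.
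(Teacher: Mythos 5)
Your proof is correct and follows essentially the same route as the paper's: the Jacobian criterion with a case split on whether $y_{n+1}$ vanishes, using smoothness of $X$ in the first case and transversality of $X\cap H$ (linear independence of $df$ and $dl$, i.e.\ the rank-$2$ condition on the matrix of partials) in the second. Your treatment is slightly more explicit in places --- spelling out the Euler relations and excluding the vertex $[0:\cdots:0:1]$ by deducing $l\equiv 0$, where the paper simply notes that the vertex is a smooth point of $Y$ --- but these are presentational differences, not a different argument.
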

\begin{proof}
Set $F(y_0, \dots, y_{n+1}) = f(y_0, \dots, y_n) + l(y_0, \dots, y_n)y_{n+1}^2$. A singular point is a common zero point for both $F$ and the vector $(\frac{\partial F}{\partial y_0}, \dots, \frac{\partial F}{\partial y_i}, \dots, \frac{\partial F}{\partial y_n}, \frac{\partial F}{\partial y_{n+1}})$ which is $(\frac{\partial f}{\partial y_0} + \frac{\partial l}{\partial y_0}y_{n+1}^2, \dots, \frac{\partial f}{\partial y_i} + \frac{\partial l}{\partial y_i}y_{n+1}^2, \dots, \frac{\partial f}{\partial y_n} + \frac{\partial l}{\partial y_n}y_{n+1}^2, 2ly_{n+1})$. Suppose $ly_{n+1}=0$. There are two possibilities: either $y_{n+1} = 0$ or $l(y_0, \dots, y_n)=0$. If $y_{n+1}=0$ (and hence $f(y_0, \dots, y_n)=0$) then the vector must be nonzero because $X$ is smooth. Now suppose $l(y_0, \dots, y_n) = 0$ (which implies that $f(y_0, \dots, y_n)=0$). Notice that $[0,\dots,0,1]$ is a smooth point of $Y$. Since $H$ intersects $X$ transversely, the following matrix has rank $2$ for every point on $X \cap H$ (in particular, $X \cap H$ is smooth): 
\[
\begin{pmatrix}
    \frac{\partial f}{\partial y_0}  & \dots & \frac{\partial f}{\partial y_i} & \dots & \frac{\partial f}{\partial y_n} \\
    \frac{\partial l}{\partial y_0}  & \dots & \frac{\partial l}{\partial y_i} & \dots & \frac{\partial l}{\partial y_n} \\ 
   \end{pmatrix}.
\]
As a result, the vector $(\frac{\partial F}{\partial y_0}, \dots, \frac{\partial F}{\partial y_n}, \frac{\partial F}{\partial y_{n+1}})$ is nonzero. 
\end{proof}

In a similar way one can prove the converse of \lemmaref{Y smooth}.
\begin{lemma} \label{(X,H) smooth}
Consider a cubic $n$-fold $Y \subset \bP^{n+1}$ with equation $$f(y_0, \dots, y_n) + l(y_0, \dots, y_n)y_{n+1}^2 = 0.$$ Let $X$ (resp. $H$) be the variety cut out by the equations $f(y_0, \dots, y_n)=y_{n+1}=0$ (resp. $l(y_0, \dots, y_n)=y_{n+1}=0$). If $Y$ is smooth, then $X$ is smooth. Moreover, the linear subspace $H$ intersects $X$ transversely. 
\end{lemma}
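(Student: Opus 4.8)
The plan is to argue by contraposition, mirroring the proof of \lemmaref{Y smooth}. Writing $F = f(y_0, \dots, y_n) + l(y_0, \dots, y_n) y_{n+1}^2$, I record the gradient
\[
\left( \frac{\partial F}{\partial y_0}, \dots, \frac{\partial F}{\partial y_n}, \frac{\partial F}{\partial y_{n+1}} \right) = \left( \frac{\partial f}{\partial y_0} + \frac{\partial l}{\partial y_0} y_{n+1}^2, \dots, \frac{\partial f}{\partial y_n} + \frac{\partial l}{\partial y_n} y_{n+1}^2, \; 2 l y_{n+1} \right),
\]
and recall that, since $F$ is homogeneous, a point of $Y$ is singular precisely when this gradient vanishes there (by Euler's relation the vanishing of all partials already forces $F=0$). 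So it suffices to manufacture, out of a failure of smoothness of $X$ or of transversality, a nonzero common zero of all these partials.

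First I would show that $X$ is smooth. If it were not, there would be a point $a = [a_0 : \dots : a_n]$ with $f(a) = 0$ and $\frac{\partial f}{\partial y_i}(a) = 0$ for all $i \le n$. Then $[a_0 : \dots : a_n : 0] \in \bP^{n+1}$ lies on $Y$ (as $F(a, 0) = f(a) = 0$), and setting $y_{n+1} = 0$ kills every term involving $l$, so all partials of $F$ vanish at this point. This contradicts the smoothness of $Y$, forcing $X$ to be smooth.

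Next, assuming $X$ smooth, I would establish transversality by the same device. A failure of transversality at some $a \in X \cap H$ means the $2 \times (n+1)$ matrix of partials of $f$ and $l$ has rank $\le 1$ at $a$. Here the gradient $\nabla f(a)$ is nonzero because $X$ is smooth, and $\nabla l$ is the nonzero constant vector of coefficients of the linear form $l$; hence rank $\le 1$ forces $\nabla f(a) = \lambda\, \nabla l(a)$ for some scalar $\lambda$, necessarily $\lambda \neq 0$. Working over $\bC$, choose $c$ with $c^2 = -\lambda$. At $[a_0 : \dots : a_n : c]$ each of the first $n+1$ partials becomes $(\lambda + c^2)\frac{\partial l}{\partial y_i}(a) = 0$, while $\frac{\partial F}{\partial y_{n+1}} = 2 l(a) c = 0$ since $a \in H$, and $F(a,c) = f(a) + l(a) c^2 = 0$. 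Thus $[a : c]$ is a singular point of $Y$, again a contradiction, so $H$ meets $X$ transversely.

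The arguments are short; the only point that needs care is the transversality step, where one must use both that $X$ is smooth (to know $\nabla f(a) \ne 0$) and that $l$ is a genuine nonzero linear form (to know $\nabla l \ne 0$), so that rank $\le 1$ really yields the proportionality $\nabla f(a) = \lambda\, \nabla l(a)$ rather than merely one row lying in the span of the other; and that the ground field is algebraically closed, so the equation $c^2 = -\lambda$ is solvable. I expect no further obstacles.
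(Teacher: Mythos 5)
Your proof is correct and is essentially the argument the paper intends: the paper dismisses this lemma in one line (``In a similar way one can prove the converse of \lemmaref{Y smooth}''), i.e., by the same Jacobian computation, and your contrapositive write-up is exactly that computation run in reverse --- manufacturing the singular point $[a_0:\dots:a_n:0]$ of $Y$ from a singular point of $X$, and the singular point $[a_0:\dots:a_n:c]$ with $c^2=-\lambda$ from a tangency $\nabla f(a)=\lambda\,\nabla l$. The two points needing care (that $\nabla f(a)\neq 0$, $\nabla l\neq 0$ so rank $\le 1$ forces proportionality, and that $c$ exists over $\bC$) are correctly identified and handled, so there is no gap.
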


Now we study the geometry of the smooth cubic $n$-fold $Y$. From the equation it is easy to see that $Y$ contains the point $p=[0,\dots,0,1]$. Furthermore, the hyperplane section of $Y$ defined by $l(y_0, \dots, y_n)=0$ is a cone over the smooth cubic $(n-2)$- fold $X \cap H$ with vertex $p$. As a generalization of an Eckardt point for a cubic surface (see for example \cite[Chap. 9]{Dolgachev_cag}), we call $p$ an Eckardt point of $Y$. The cubic $n$-fold $Y$ coming from a cubic $(n-1)$-fold $X$ and a hyperplane $H$ is characterized geometrically by the fact that it contains an Eckardt point.

\begin{definition} 
A point $p$ on a cubic hypersurface $V \subset \mathbb{P}^{n+1}$ is called an \emph{Eckardt point} if the following two conditions hold:
\begin{enumerate}
\item $p$ is a smooth point of $V$;
\item $p$ is a point of multiplicity $3$ for the $(n-1)$-dimensional cubic $V \cap T_pV$ (where $T_pY$ denotes the projective tangent space at $p \in Y$).  
\end{enumerate} 
\end{definition}

Eckardt points are also called star points by other authors (cf. \cite{CC_star}). If $V$ is smooth, then the second condition is equivalent to saying that $(V \cap T_pV) \subset T_pY$ is a cone with vertex $p$ over a $(n-2)$-dimensional cubic hypersurface. It is also easy to prove the following lemma.

\begin{lemma} \label{Eckardt eqn}
If a cubic $n$-fold $V$ contains an Eckardt point, then we can choose coordinates such that the equation defining $V$ is $G(y_0, \dots,y_n) = g(y_0,\dots,y_n)+y_0y_{n+1}^2$ where $g$ is a homogeneous cubic polynomial.
\end{lemma}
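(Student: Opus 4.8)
The plan is to work directly with the defining cubic $F$ after placing the Eckardt point at a standard location, and to read off the two conditions in the definition of an Eckardt point as vanishing statements on the coefficients of $F$, organized by their degree in a distinguished variable. First I would choose homogeneous coordinates $y_0, \dots, y_{n+1}$ on $\bP^{n+1}$ so that the Eckardt point is $p = [0:\dots:0:1]$, and expand the defining equation by powers of the last variable:
\begin{equation} \label{Fexpand}
F = y_{n+1}^3 B_0 + y_{n+1}^2 B_1 + y_{n+1} B_2 + B_3,
\end{equation}
where each $B_k = B_k(y_0, \dots, y_n)$ is homogeneous of degree $3-k$. Evaluating at $p$ gives $F(p) = B_0$, so the condition $p \in V$ forces the constant $B_0$ to vanish and removes the $y_{n+1}^3$ term.

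Next I would extract condition (1), smoothness of $p$, from the gradient of $F$ at $p$. A direct computation shows $\partial F/\partial y_{n+1}(p) = 0$ and $\partial F/\partial y_i(p)$ equals the coefficient of $y_i$ in the linear form $B_1$, for $0 \le i \le n$. Hence the projective tangent space is $T_pV = \{B_1 = 0\}$, and smoothness is exactly the statement that $B_1$ is a nonzero linear form. After a linear change of the coordinates $y_0, \dots, y_n$ I may normalize $B_1 = y_0$, so $T_pV = \{y_0 = 0\}$ and $F = y_0 y_{n+1}^2 + y_{n+1} B_2 + B_3$.

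The key step is translating condition (2) into a divisibility statement on $B_2$. Restricting $F$ to $T_pV = \{y_0 = 0\}$ gives the equation $y_{n+1}\,B_2(0, y_1, \dots, y_n) + B_3(0, y_1, \dots, y_n)$ of the cubic $V \cap T_pV$ in the $\bP^n$ with coordinates $[y_1 : \dots : y_n : y_{n+1}]$, in which $p = [0:\dots:0:1]$. Dehomogenizing at $y_{n+1}=1$, the terms of degree $2$ and $3$ near $p$ are $B_2(0, y_1, \dots, y_n)$ and $B_3(0, y_1, \dots, y_n)$ respectively; multiplicity $3$ at $p$ is precisely the vanishing of the degree-$2$ part, i.e. $B_2(0, y_1, \dots, y_n) = 0$. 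Since $B_2$ is a quadratic form, this says every monomial of $B_2$ is divisible by $y_0$, so $B_2 = y_0 L$ for a linear form $L(y_0, \dots, y_n)$.

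Finally I would complete the square in $y_{n+1}$. Substituting $y_{n+1} \mapsto y_{n+1} - \tfrac12 L$ — a legitimate linear change of coordinates since $L$ is linear in $y_0, \dots, y_n$, and permissible over $\bC$ — turns $y_0 y_{n+1}^2 + y_0 L\, y_{n+1} + B_3$ into $y_0 y_{n+1}^2 + g$, where $g = B_3 - \tfrac14 y_0 L^2$ is homogeneous of degree $3$ in $y_0, \dots, y_n$ alone. This is exactly the asserted normal form $G = g(y_0, \dots, y_n) + y_0 y_{n+1}^2$. I expect the main subtlety to be the correct interpretation of condition (2): identifying the multiplicity of $V \cap T_pV$ at $p$ with the vanishing of the degree-$2$ part in the chart centered at $p$, and thereby with the divisibility $y_0 \mid B_2$. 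The completion of the square is then a routine verification, modulo noting that it uses $\operatorname{char} \ne 2$.
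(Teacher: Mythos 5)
Your proof is correct. The paper gives no proof of this lemma (it is prefaced only by ``It is also easy to prove the following lemma''), and your argument --- expanding $F$ in powers of $y_{n+1}$, translating $p \in V$, smoothness, and the multiplicity-$3$ condition into $B_0 = 0$, $B_1 \neq 0$ (normalized to $y_0$), and $y_0 \mid B_2$, then completing the square in $y_{n+1}$ --- is exactly the standard computation the authors intend. One slip in the write-up: the coefficient $B_k$ of $y_{n+1}^{3-k}$ is homogeneous of degree $k$, not $3-k$; your subsequent usage ($B_0$ constant, $B_1$ linear, $B_2$ quadratic, $B_3$ cubic) is the correct one.
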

%\begin{proof}
%Under an appropriate choice of coordinates, we can assume that the Eckardt point is $p=[0,\dots,0,1]$ and that the tangent hyperplane $T_pV$ is given by $(y_0=0)$. This means that the monomials $y_{n+1}^2y_1,  \dots, y_{n+1}^2y_n, y_{n+1}^3$ do not occur, thus we can write $$G(y_0,\dots,y_{n+1})=y_0 y_{n+1} l(y_0,\dots,y_{n+1})+y_{n+1} q(y_1,\dots,y_n)+c(y_0,\dots,y_n).$$   
%The condition of multiplicity $3$ for the Eckardt point $p \in (Y \cap T_pV)\subset T_pV$ translates into $q(y_1,\dots,y_n) = 0$ and the smoothness condition allows us to assume that $l(y_0,\dots,y_{n+1})=y_{n+1}+2l'(y_0,\dots,y_n)$. Thus we have 
%$$G(y_0,\dots,y_{n+1})=y_0(y_{n+1}+l'(y_0,\dots,y_n))^2+(c(y_0,\dots,y_n)-y_0l'(y_0,\dots,y_n)^2)$$ 
%and then an obvious change of coordinates ($y_{n+1}+l'(y_0,\dots,y_n) \mapsto y_{n+1}$) completes the proof.
%\end{proof}

We give a second geometric characterization of $Y$. There is a natural involution
\begin{equation} \label{sigma}
\sigma: [y_0, \dots, y_n, y_{n+1}] \mapsto [y_0, \dots, y_n, -y_{n+1}]
\end{equation} 
acting on the smooth cubic $n$-fold $Y=(f(y_0, \dots, y_n) + l(y_0, \dots, y_n)y_{n+1}^2 = 0)$. Clearly $\sigma$ fixes the hyperplane $(y_{n+1}=0)$ pointwise and also the point $p=[0,\dots,0,1]$. Conversely, we have the following lemma. Note that the fixed locus of an involution of $\bP^{n+1}$ is the union of two subspaces of dimensions $k$ and $n-k$.

\begin{lemma}\label{leminvo}
Let $V \subset \bP^{n+1}$ be a smooth cubic $n$-fold. Suppose $\tau$ is an involution of $\bP^{n+1}$ preserving $V$ and the fixed locus of $\tau$ consists of a hyperplane and a point $p$ which belongs to $V$. Then there exist coordinates such that $V$ is cut out by the equation $f(y_0,\dots,y_n)+l(y_0,\dots,y_n)y_{n+1}^2=0$ where $f$ (resp. $l$) is a cubic (resp. linear) homogeneous polynomial. 
\end{lemma}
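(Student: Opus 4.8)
The plan is to normalize $\tau$ to the standard reflection $[y_0,\dots,y_n,y_{n+1}]\mapsto[y_0,\dots,y_n,-y_{n+1}]$ by a linear change of coordinates, and then translate the condition $\tau(V)=V$ into a constraint on the defining cubic, which will essentially read off the normal form. First I would lift $\tau$ to a linear automorphism $\tilde\tau$ of $\bC^{n+2}$. Since $\tau$ has order two in $\PGL_{n+2}$, the square $\tilde\tau^2$ is a scalar, and after rescaling the lift we may assume $\tilde\tau^2=\id$; hence $\tilde\tau$ is diagonalizable with eigenvalues in $\{+1,-1\}$, and the fixed locus of $\tau$ is the disjoint union $\bP(E_+)\sqcup\bP(E_-)$ of the projectivized eigenspaces. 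The hypothesis that this locus is a hyperplane together with a point forces $\{\dim E_+,\dim E_-\}=\{n+1,1\}$. (Replacing $\tilde\tau$ by $-\tilde\tau$ interchanges $E_+$ and $E_-$ without changing $\tau$, so I may take $p$ to span the one-dimensional eigenspace.) Choosing coordinates adapted to the eigenspace decomposition, with $p=[0,\dots,0,1]$ and the fixed hyperplane equal to $(y_{n+1}=0)$, I arrange that $\tau$ acts by $y_{n+1}\mapsto -y_{n+1}$.

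Next, write $V=\{F=0\}$ for a cubic form $F$. Because $V$ is smooth it is reduced and irreducible, so $F$ is determined up to a scalar, and $\tau(V)=V$ gives $\tau^*F=\lambda F$ for some scalar $\lambda$; applying $\tau^*$ twice yields $\lambda^2=1$. Expanding $F=\sum_{i=0}^{3}F_i(y_0,\dots,y_n)\,y_{n+1}^{\,i}$ with $F_i$ homogeneous of degree $3-i$, the relation $\tau^*F=\sum_i(-1)^iF_i\,y_{n+1}^{\,i}=\lambda F$ separates monomials by their parity in $y_{n+1}$. When $\lambda=+1$ this forces $F_1=F_3=0$, leaving $F=F_0+F_2\,y_{n+1}^2$; setting $f=F_0$ (cubic) and $l=F_2$ (linear) gives exactly the asserted normal form, and one checks directly that $p\in V$ automatically.

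The one step that requires an idea rather than bookkeeping is excluding the anti-invariant case $\lambda=-1$, which is where the smoothness hypothesis enters. Here the parity analysis forces $F_0=F_2=0$, so $F=F_1\,y_{n+1}+F_3\,y_{n+1}^3=y_{n+1}\,(F_1+F_3\,y_{n+1}^2)$ factors off the linear form $y_{n+1}$. Thus $V$ would be reducible, containing the hyperplane $(y_{n+1}=0)$ as a component; since that hyperplane meets the residual quadric in $\bP^{n+1}$ (with $n\ge 3$), the gradient of $F=y_{n+1}\,Q$ vanishes identically along $\{y_{n+1}=Q=0\}$, so $V$ is singular there, contradicting smoothness. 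Hence $\lambda=-1$ cannot occur, $\lambda=+1$, and the argument concludes as above. I expect this ruling-out of the anti-invariant (reducible) case to be the crux; the rest is coordinate normalization and comparison of coefficients. It is worth noting that the hypothesis $p\in V$ is in fact superfluous once smoothness is assumed, since smoothness alone already forces the invariant case in which $p\in V$ holds automatically.
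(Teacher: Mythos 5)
Your proof is correct and follows essentially the same route as the paper's: normalize $\tau$ to the standard reflection $y_{n+1}\mapsto -y_{n+1}$, expand the defining cubic in powers of $y_{n+1}$, and use invariance to kill the odd-degree terms. If anything, yours is more complete than the paper's two-line argument, which uses $p\in V$ to eliminate the $y_{n+1}^3$ coefficient and tacitly assumes $\tau^*F=F$; you instead rule out the anti-invariant case $\tau^*F=-F$ (reducibility contradicting smoothness) and correctly note that $p\in V$ is then automatic.
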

\begin{proof}
Suppose $V$ admits a projective isomorphism of order $2$ with one isolated fixed point $p$ on $V$. Choose coordinates on $\P^5$ such that $\tau$ is given by $[y_0, \dots, y_n, y_{n+1}] \mapsto [y_0, \dots, y_n, -y_{n+1}]$. The fixed point is $p=[0,\dots,0,1]$. Then $V$ has equation $y_{n+1}^2l(y_0,\dots,y_n)+y_{n+1} q(y_1,\dots,y_n)+f(y_0,\dots,y_n)=0$. Because $\tau$ preserves $V$ we have $q=0$.   
\end{proof}

Now let us consider the projection of the smooth cubic $n$-fold $Y$ from the Eckardt point $p=[0, \dots, 0, 1]$ to the hyperplane $(y_{n+1}=0)$. Specifically we have a rational map $$\pi: Y \dashrightarrow \bP^{n}, \,\,\, [y_0,\dots ,y_n, y_{n+1}] \mapsto [y_0,\dots, y_n,0]$$ which has degree $2$ generically. Clearly $\sigma$ is the involution relative to $\pi$. To resolve the indeterminacy locus let us blow up $Y$ at $p$. Note that we view $X$ and $H$ as subvarieties of the linear subspace $(y_{n+1}=0) \cong \bP^{n}$. 

\begin{proposition} \label{Y projection}
Notations as above. After blowing up the point $p=[0,\dots,0,1]$ we obtain a morphism $\pi: \mathrm{Bl}_pY \rightarrow \bP^{n}$ which is also the blowup $\Bl_{X \cap H}Z \rightarrow Z$ of the double cover $Z$ of $\bP^{n}$ ramified over $X+H$ along the reduced inverse image of $X \cap H$.   
\begin{equation*}
\begin{tikzcd}
\Bl_{p}Y \arrow{rr}{\cong} \arrow[swap]{dr}{\pi}  \arrow[loop left]{l}{\sigma}
               &&\Bl_{X \cap H}Z \arrow{dl}{}\\
& \bP^{n} &
\end{tikzcd}
\end{equation*}
\end{proposition}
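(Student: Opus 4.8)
The plan is to realize the stated isomorphism through an explicit birational map between $Y$ and $Z$, and then to identify both $\Bl_p Y$ and $\Bl_{X\cap H}Z$ as the minimal resolution of the same singular double cover. First I would write $Z$ concretely as the hypersurface $\{w^2 = -f(y)\,l(y)\}$ in the weighted projective space $\bP(1,\dots,1,2)$ (coordinates $y_0,\dots,y_n$ of weight $1$ and $w$ of weight $2$), so that $Z\to\bP^{n}$ is the double cover branched along $X+H$. The key object is the map
\[
\phi: Y \dashrightarrow Z, \qquad [y_0:\dots:y_n:y_{n+1}] \mapsto [y_0:\dots:y_n: y_{n+1}\, l(y_0,\dots,y_n)].
\]
Using the defining equation $f + l\,y_{n+1}^2 = 0$ of $Y$ one checks that $\phi$ lands in $Z$ (indeed $w^2 = y_{n+1}^2 l^2 = l\cdot(-f)$), that it intertwines the involution $\sigma$ of \eqref{sigma} with the deck transformation $w\mapsto -w$, and that it commutes with the two projections to $\bP^{n}$. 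A rational inverse is given by $[y_0:\dots:y_n:w]\mapsto[y_0\, l(y):\dots:y_n\, l(y):w]$, so $\phi$ is birational; moreover its only indeterminacy point is the Eckardt point $p=[0:\dots:0:1]$, since that is the unique point of $Y$ at which all of $y_0,\dots,y_n$ vanish.

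Next I would resolve $\phi$ by blowing up $p$. Working in the affine chart $y_{n+1}=1$, where $Y$ becomes $\{l(y)+f(y)=0\}$ with $p$ the origin, and introducing blow-up coordinates $y_i = y_0 v_i$, a direct computation shows that $\phi$ extends to a morphism $\tilde\phi:\Bl_p Y \to Z$ given in these coordinates by $[1:v_1:\dots:v_n:-y_0\,\bar f(v)]$; in particular a single blow-up suffices. It then remains to identify $\tilde\phi$ with the blow-down $\Bl_{X\cap H}Z \to Z$. For this I would analyze the singularities of $Z$: in local coordinates in which $X=\{a=0\}$ and $H=\{b=0\}$ the double cover reads $w^2=ab$, so $Z$ has transverse $A_1$ singularities exactly along the reduced preimage of $X\cap H$, and these are resolved by blowing up $X\cap H$, the exceptional fibre over each point being a single $\bP^1$.

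The heart of the argument --- and the step I expect to be the main obstacle --- is to match $\tilde\phi$ with this resolution over $X\cap H$. The point is that for $\xi\in X\cap H$ one has $f(\xi)=l(\xi)=0$, so the entire line $\ell_\xi$ through $p$ in the direction $\xi$ lies on $Y$; its strict transform $\tilde\ell_\xi\cong\bP^1$ in $\Bl_p Y$ is contracted by $\tilde\phi$ to the singular point $[\xi:0]$ of $Z$, while over $\bP^{n}\setminus(X\cap H)$ the map $\tilde\phi$ is an isomorphism. Thus $\tilde\phi:\Bl_p Y\to Z$ is a resolution of the $A_1$ locus whose exceptional fibres are reduced, irreducible $\bP^1$'s, i.e.\ the minimal resolution. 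Since the minimal resolution of a transverse $A_1$ singularity is unique and coincides with the blow-up of the singular locus, $\tilde\phi$ factors through an isomorphism $\Bl_p Y \xrightarrow{\ \cong\ } \Bl_{X\cap H}Z$ over $Z$; concretely, one invokes the universal property of the blow-up once the pullback of $I_{X\cap H}$ has been checked to be invertible, and then concludes by Zariski's main theorem that the resulting birational morphism between these two smooth models is an isomorphism. By construction this isomorphism respects $\sigma$ and the projections to $\bP^{n}$, which is exactly the commutative diagram asserted. The delicate points are the scheme-theoretic fibre computation over $X\cap H$ (guaranteeing reduced irreducible $\bP^1$ fibres) and the verification that one blow-up, rather than an iterated sequence, already resolves the indeterminacy; both are settled by the local coordinate computations above.
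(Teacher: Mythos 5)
Your proposal is correct in substance but takes a genuinely different route from the paper's proof. The paper never writes down the birational map $Y \dashrightarrow Z$ explicitly: it blows up $p$, studies the projection $\pi: \Bl_p Y \rightarrow \bP^n$ directly (showing the exceptional divisor maps onto $H$, that $\pi$ is a double cover branched along $(X+H)\setminus(X\cap H)$ away from $X\cap H$, and that the fibers over $X\cap H$ are $\bP^1$'s), then invokes the universal property of the blowup of the \emph{base}, $\Bl_{X\cap H}\bP^n \rightarrow \bP^n$, to factor $\pi$, identifies $\Bl_p Y$ as the double cover of $\Bl_{X\cap H}\bP^n$ branched along the strict transforms $X'+H'$, and finally cites \cite[\S3]{CvS_defcover} to identify that double cover with $\Bl_{X\cap H}Z$. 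You instead work with the explicit map $[y:y_{n+1}]\mapsto[y:y_{n+1}l(y)]$ into $Z\subset\bP(1,\dots,1,2)$ and apply the universal property of the blowup of the \emph{cover}, $\Bl_{X\cap H}Z \rightarrow Z$. Your route is more self-contained (no external reference on blowups of double covers needed) and produces explicit equations throughout; the paper's route keeps the double-cover structure and the involution visible at every stage and outsources the final gluing to a standard result.

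One step in your writeup must be tightened. ``Proper birational morphism between two smooth models, hence an isomorphism by Zariski's main theorem'' is false as stated (the blowup of a point on a smooth variety is a counterexample): ZMT yields an isomorphism only once you know the induced morphism $g:\Bl_p Y \rightarrow \Bl_{X\cap H}Z$ is quasi-finite, i.e.\ that $g$ does not contract the curves $\tilde\ell_\xi$. Similarly, ``the minimal resolution of a transverse $A_1$ singularity is unique'' is not a citable fact in dimension $\geq 3$. Both points are repaired by your own coordinates. In the chart where $\Bl_p Y=\{y_0^2 f(1,v)+l(1,v)=0\}$ and $\tilde\phi(y_0,v)=[1:v:-y_0f(1,v)]$, the pullback of the reduced ideal $(\bar w,\bar f,\bar l)$ of $\mathrm{Sing}(Z)$ is $\bigl(y_0f(1,v),\,f(1,v),\,y_0^2f(1,v)\bigr)=\bigl(f(1,v)\bigr)$, which is invertible; this produces $g$ via the universal property. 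Moreover $g$ sends $(y_0,\xi)\in\tilde\ell_\xi$ to the point of $\Bl_{X\cap H}Z$ whose blow-up coordinates are $[\bar w:\bar f:\bar l]=[-y_0:1:-y_0^2]$, visibly non-constant (indeed injective) in $y_0$. Hence $g$ is proper, quasi-finite and birational onto a normal (in fact smooth) variety, so it is an isomorphism; compatibility with $\sigma$ and with the projections to $\bP^n$ then follows from the uniqueness clause of the universal property, since $\tilde\phi\circ\sigma$ equals the composition of $\tilde\phi$ with the deck involution of $Z$.
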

\begin{proof}
The equation of $Y$ is $f(y_0, \dots, y_n) + l(y_0, \dots, y_n)y_{n+1}^2 = 0$. After choosing the open affine $(y_{n+1} \neq 0)$ containing the point $p=[0,\dots,0,1]$ we assume that $y_{n+1}=1$. The blowup of the open affine at the point $(0,\dots,0) \in \bC^{n+1}$ is the morphism $\proj \bC[y_0,\dots,y_n][A_0,\dots,A_n]/(y_iA_j=y_jA_i) \rightarrow \spec \bC[y_0,\dots,y_n]$. The projection morphism $\pi$ is defined by $(y_0, \dots, y_n)[A_0,\dots,A_n] \mapsto [A_0,\dots,A_n]$. Let us describe the exceptional divisor. Choose an open subset $(A_0 \neq 0)$. Write $a_i = \frac{A_i}{A_0}$ for $1 \leq i \leq n$. Now pull back the defining equation of $Y$ using $y_i=y_0a_i$. We get $y_0(y_0^2f(1,a_1, \dots, a_n)+l(1,a_1, \dots, a_n))=0$. The blowup $\mathrm{Bl}_pY$ is given locally by $y_0^2f(1,a_1, \dots, a_n)+l(1,a_1, \dots, a_n)=0$. Over the point $p$ we have $y_0 = 0$ and hence $l(1,a_1, \dots, a_n) = 0$. One has similar descriptions when choosing different open subsets. It follows that globally the exceptional divisor is defined by $l(A_0,\dots,A_n) = 0$ which is mapped onto the hyperplane $H$ by $\pi$. Over the complement $\bP^{n} \backslash (X \cap H)$ of $X \cap H$ the morphism $\pi$ is a double cover branched along $(X+H) \backslash (X \cap H)$. The fibers of $\pi$ over the cubic surface $X \cap H$ are $\bP^1$'s (N.B. the involution $\sigma$ acts on these fibers). Because $\pi^{-1}(X \cap H)$ is a divisor in $\Bl_pY$, there exists a unique morphism $\Bl_pY \rightarrow \Bl_{(X \cap H)}\bP^{n}$ factoring $\pi$. It is not difficult to see that the morphism $\Bl_pY \rightarrow \Bl_{(X \cap H)}\bP^{n}$ is a double cover ramified over the strict transforms $X' + H'$ of $X + H$. Let $Z$ be the double cover of $\bP^{n}$ branched along $X+H$. By \cite[\S3]{CvS_defcover} $\Bl_pY$ is the blowup of $Z$ along the locus over the cubic surface $X \cap H$. \end{proof}

\begin{remark}
The cubic $n$-fold $Y$ and the double cover $Z$ are birational. Let us describe the birational maps. To get $Z$ one blows up $Y$ at the Eckardt point $p$ and then blow down the ruling of the associated cone. Conversely, by the proof of \propositionref{Y projection} the cubic $n$-fold $Y$ can be obtained by blowing down the strict transform of the inverse image of $H$ in $Z$ (to the point $p$) for $\Bl_{X\cap H}Z$. 
\end{remark}

\begin{remark} \label{weighted Z}
Let $Z$ be an $n$-dimensional quasi-smooth hypersurface of degree $6$ in $\bP(1,2,\dots,2,3)$. Let $z_0, z_1, \dots, z_n, z_{n+1}$ be the weighted homogeneous coordinates. After a change of coordinates one can assume that $Z$ has the equation $$z_{n+1}^2 + g(z_0^2, z_1, \dots, z_n) = 0$$ where $g$ is a cubic homogeneous polynomial. It is not difficult to see that $Z$ is a double cover of $\bP^{n}$ along a cubic $(n-1)$-fold $X$ union a hyperplane $H$. Moreover, both $X$ and $X \cap H$ are smooth. The cubic $n$-fold $Y$ constructed from the pair $(X,H)$ is obtained as the closure of $Z$ under the Veronese map of degree 3 from $\bP(1,2,\dots,2,3)$ to $\bP^{n+1}$.
\end{remark}

In what follows we shall focus on the case when $n=4$, that is, $X$ is a cubic threefold and $H$ is a hyperplane in $\bP^4$. The cubic fourfold $Y$ coming from $(X,H)$ contains an Eckardt point $p=[0,0,0,0,0,1]$ and admits an involution $\sigma$ fixing $p$ and a hyperplane (see (\ref{sigma})). It is also birational to the double cover $Z$ of $\bP^4$ branched along $X+H$ as described in \propositionref{Y projection}. 

To conclude this section, let us mention another geometric property of the smooth cubic fourfold $Y$. Still, $X$ and $H$ are considered as subvarieties of the hyperplane $(y_5=0) \cong \bP^4$. Choose a line $m$ on the cubic surface $X \cap H$ and let $P \subset Y$ be the plane generated by $m$ and the Eckardt point $p$. Choose a plane $P'$ which is complementary to $m$ in the hyperplane $(y_5=0) \cong \bP^4$ and project $Y$ from $P$ to $P'$. After blowing up $P$ inside $Y$ we obtain a quadratic bundle $\Bl_PY \rightarrow P'$ whose discriminant locus is a degree $6$ curve (cf. \cite{Voisin_cubic}). The sextic discriminant curve consists of two irreducible components: a quintic curve $C$ and a line $L$. Indeed, the quintic curve $C$ is the discriminant locus of the conic bundle obtained by projecting $X$ from the line $m$ to $P'$ in $(y_5=0) \cong \bP^4$ and the line $L$ is the intersection of $H$ and $P'$. The proof is left to the reader. 

\begin{remark}
More generally, there exists a finite correspondence between the moduli of pairs $(X,H)$ and $(C,L)$ (obtained by projecting $X$ from a line on $X \cap H$, see for example \cite[\S 4.2]{Beauville_determinantal}). As discussed in \cite{Laza_n16}, the moduli of $(C,L)$ is associated to the singularity $N_{16}$, and is birational to a locally symmetric variety of type IV. In conclusion, there is a close relation between the deformations of $N_{16}$ and $O_{16}$, and we expect the moduli of $(X,H)$ is also birational to a type IV locally symmetric domain.
\end{remark}

\section{A lattice theoretical characterization of cubic fourfolds with Eckardt points} \label{Y lattice}
We now study the smooth cubic fourfold $Y$ constructed in Section \ref{associated cubic4} from the perspective of Hodge theory. The cubic fourfold $Y$ is a \emph{special cubic fourfold} (meaning it contains an algebraic surface which is not homologous to a complete intersection, see \cite{Hassett_cubic}). In fact, $Y$ contains more algebraic cycles. There exists a positive definite lattice $M$ of rank $7$ and a primitive embedding $M \subset H^4(Y, \bZ) \cap H^{2,2}(Y) \subset H^4(Y, \bZ)$. Moreover, the sublattice $M$ is invariant with respect to the natural involution $\sigma$ (see (\ref{sigma})). We shall determine $M$ and its orthogonal complement $T=M_{H^4(Y,\bZ)}^{\perp}$ using lattice theory (in particular, we use the terminologies and notations of \cite{Nikulin_lattice}) and show that $Y$ can be characterized by the condition that it is ``$M$-polarized".

Let $Y$ be the smooth cubic fourfold (see (\ref{Y eqn})) coming from a pair $(X, H)$ where $X$ is a smooth cubic threefold and $H$ is a transverse hyperplane. Let us first describe how the involution $\sigma$ in (\ref{sigma}) decomposes the Hodge structure on the primitive cohomology $H^4_{\mathrm{prim}}(Y, \bQ)$.
\begin{lemma} \label{sigma decomposition}
The Hodge structure $H^4_{\mathrm{prim}}(Y, \bQ)$ splits as $$H^4_{\mathrm{prim}}(Y, \bQ) = H^4_{\mathrm{prim}}(Y, \bQ)_{+1} \oplus H^4_{\mathrm{prim}}(Y, \bQ)_{-1}$$ where $H^4_{\mathrm{prim}}(Y, \bQ)_{+1}$ (resp. $H^4_{\mathrm{prim}}(Y, \bQ)_{-1}$) is the eigenspace of the induced action $\sigma^*$ with eigenvalue $+1$ (resp. $-1$). The Hodge numbers of $H^4_{\mathrm{prim}}(Y, \bQ)_{+1}$ (resp $H^4_{\mathrm{prim}}(Y, \bQ)_{-1}$) are $[0,0,6,0,0]$ (resp. $[0,1,14,1,0]$). 
\end{lemma}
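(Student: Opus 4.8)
The plan is to read off the two Hodge vectors from Griffiths' residue description of the primitive cohomology of a smooth hypersurface, carefully tracking the action of $\sigma^*$ through that description. The splitting itself is essentially free: since $\sigma$ is a holomorphic involution of $Y$, the induced map $\sigma^*$ on $H^4_{\mathrm{prim}}(Y,\bQ)$ is an involution defined over $\bQ$ that commutes with the Hodge decomposition, so $H^4_{\mathrm{prim}}(Y,\bQ)$ is the direct sum of its $(\pm1)$-eigenspaces and each summand is a rational sub-Hodge structure. The real content is the computation of the two sets of Hodge numbers, which must recover the Hodge vector $[0,1,20,1,0]$ of $H^4_{\mathrm{prim}}(Y)$ when added.

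For the numerics, I would write $S=\bC[y_0,\dots,y_5]$, let $J=(\partial F/\partial y_0,\dots,\partial F/\partial y_5)$ be the Jacobian ideal of $F=f+ly_5^2$, and set $R=S/J$, the Artinian Gorenstein Jacobian ring. Griffiths' isomorphism gives $H^{p,4-p}_{\mathrm{prim}}(Y)\cong R_{9-3p}$, so that the only nonzero graded pieces are $R_0,R_3,R_6$ of dimensions $1,20,1$. The involution acts on $S$ by $y_5\mapsto-y_5$, refining the grading by $y_5$-parity into $R=R^+\oplus R^-$. The key structural observation is that the five partials $\partial F/\partial y_i$ ($i\le4$) are $\sigma$-even quadrics while $\partial F/\partial y_5=2ly_5$ is a $\sigma$-odd quadric; since they form a regular sequence, the Koszul (complete-intersection) structure lets me write the $\sigma$-equivariant Poincaré series in $R(\bZ/2)=\bZ[\chi]/(\chi^2-1)$ as
\[
P_R(t)=\frac{(1-t^2)^5\,(1-\chi t^2)}{(1-t)^5\,(1-\chi t)}.
\]
Projecting onto the trivial and sign isotypic parts with the idempotents $\tfrac{1\pm\chi}{2}$ (i.e.\ setting $\chi=\pm1$) yields the generating functions $\sum_k(\dim R^+_k)t^k=(1+t)^4(1+t+t^2)$ and $\sum_k(\dim R^-_k)t^k=t(1+t)^4$; in degrees $0,3,6$ these read $\dim R^+=1,14,1$ and $\dim R^-=0,6,0$.

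The step that demands care — and the genuine pitfall — is matching a $\sigma$-parity in $R$ with the correct $\sigma^*$-eigenvalue on cohomology, because the residue isomorphism is $\sigma$-equivariant only up to a sign coming from the form $\Omega=\sum_i(-1)^i y_i\,dy_0\wedge\cdots\widehat{dy_i}\cdots\wedge dy_5$. As $\sigma$ reverses both $y_5$ and $dy_5$ one checks $\sigma^*\Omega=-\Omega$, so a class represented by $\mathrm{Res}\!\left(P\,\Omega/F^{\,5-p}\right)$ with $\deg P=9-3p$ acquires the sign $-\epsilon(P)$, where $\epsilon(P)=\pm1$ is the $y_5$-parity of $P$. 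Hence the \emph{even} part $R^+$ lands in the $(-1)$-eigenspace and the \emph{odd} part $R^-$ in the $(+1)$-eigenspace; getting this twist backwards would simply interchange the two answers. Substituting the dimensions above, the $(+1)$-eigenspace has Hodge vector $[0,0,6,0,0]$ and the $(-1)$-eigenspace has $[0,1,14,1,0]$, which sum to $[0,1,20,1,0]$ as a consistency check, completing the proof.
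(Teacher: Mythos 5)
Your proof is correct, and its skeleton --- Griffiths' residue description of $H^4_{\mathrm{prim}}(Y)$, the sign $\sigma^*\Omega=-\Omega$, and a parity count in the Jacobian ring --- is the same as the paper's; but the execution differs in a worthwhile way. The paper first asserts that it suffices to verify the Hodge numbers on a \emph{single} member of the family, then specializes to the Fermat-like fourfold $F=y_0^3+\cdots+y_4^3+y_0y_5^2$ and counts monomials in its Jacobian ring by hand ($\binom{5}{3}+\binom{4}{1}=14$ with $y_5$-even exponent, $\binom{4}{2}=6$ with $y_5$-odd exponent). You instead work with an arbitrary smooth $Y=(f+ly_5^2=0)$ and extract the dimensions of $R^{\pm}$ in degrees $0,3,6$ from the $\bZ/2$-equivariant Hilbert series of the complete intersection $R=S/J(F)$, using that the partials $\partial F/\partial y_i$ ($i\le 4$) are $\sigma$-even while $\partial F/\partial y_5=2ly_5$ is $\sigma$-odd; your series and the specializations at $\chi=\pm1$ are correct, giving $(1+t)^4(1+t+t^2)$ and $t(1+t)^4$, hence $(1,14,1)$ and $(0,6,0)$ in the relevant degrees. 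What your route buys is uniformity: you never need the reduction-to-one-example step, which the paper leaves implicit (it rests on connectedness of the family and local constancy of the $\sigma^*$-eigenspace Hodge numbers, i.e.\ deformation invariance of the equivariant decomposition); conversely, the paper's count is quicker once that reduction is granted. You also correctly isolate the one real subtlety, namely the extra sign from $\sigma^*\Omega=-\Omega$, which places the $y_5$-even part of the Jacobian ring in the $(-1)$-eigenspace and the $y_5$-odd part in the $(+1)$-eigenspace --- in agreement with the paper's assignment of $[0,1,14,1,0]$ and $[0,0,6,0,0]$.
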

\begin{proof}
It suffices to check the claim on Hodge numbers for a particular cubic fourfold $Y$. We assume that $Y$ is given by $F(y_0, \dots, y_4) =y_0^3 + y_1^3 + y_2^3 + y_3^3 + y_4^3 + y_0y_5^2$. The Jacobian ideal is $J(F) = \langle 3y_0^2 + y_5^2, 3y_1^2, 3y_2^2, 3y_3^2, 3y_4^2, 2y_0y_5 \rangle$. Write $\Omega = \sum_i (-1)^i y_i dy_0 \wedge \dots \wedge \widehat{dy_i} \wedge \dots \wedge dy_5$. One has $\sigma^*\Omega = - \Omega$. A basis of $H^{3,1}(Y)$ (resp. $H^{1,3}(Y)$) is given by $\{\Res\frac{\Omega}{F^2}\}$ (resp. $\{\Res\frac{(y_1y_2y_3y_4y_5^2) \cdot \Omega}{F^4}\}$) which is clearly anti-invariant for the action of $\sigma^*$. Similarly, a basis of $H^{2,2}_{\mathrm{prim}}(Y)$ consists of classes of the form $\Res\frac{A \cdot \Omega}{F^3}$ with $A$ a cubic monomial in the Jacobian ring $R(F)$ (i.e. $A \in R(F)_3$). If $A$ contains an even power of $y_5$ (there are ${5 \choose 3} + {4 \choose 1} = 14$ such monomials) then $\Res\frac{A \cdot \Omega}{F^3}$ is anti-invariant for $\sigma^*$. Otherwise (there are ${4 \choose 2} = 6$ such $A$'s) $\Res\frac{A \cdot \Omega}{F^3}$ is $\sigma^*$-invariant.
\end{proof} 

The class of a hyperplane section of $Y$ is clearly fixed by the induced involution $\sigma^*$ (e.g. consider the hyperplane $(y_5=0)$). Let $M \subset H^4(Y, \bZ) \cap H^{2,2}(Y)$ be the $\sigma^*$-invariant sublattice of $H^4(Y, \bZ) \cap H^{2,2}(Y)$. By \lemmaref{sigma decomposition} and Hodge-Riemann relations, $M$ is a positive definite lattice of rank $7$. It is also easy to see that $M$ is saturated (i.e. $M = (M \otimes \bQ) \cap (H^4(Y, \bZ) \cap H^{2,2}(Y))$) and hence a primitive sublattice (i.e. $(H^4(Y, \bZ) \cap H^{2,2}(Y))/M$ is torsion free). 

We construct some algebraic surfaces in $Y$ whose cohomology classes belong to $M$. Let $p=[0,0,0,0,0,1]$ be the Eckardt point of $Y$. Recall that $Y$ contains a cone over the cubic surface $X \cap H$ with vertex $p$. It is classically known that the smooth cubic surface $X \cap H$ is isomorphic to $\bP^2$ with $6$ points blown up. Let $\bar{F}_0$ be the pull-back of a (general) line on $\bP^2$. Denote the exceptional curves by $\bar{F}_1, \dots, \bar{F}_6$. (The embedding of the blowup of $\bP^2$ at $6$ points into $\bP^3$ as a cubic surface is given by the linear system $|3\bar{F}_0-\bar{F}_1 - \dots - \bar{F}_6|$.) We shall use $F_0, F_1, \dots, F_6$ to denote the cones over the corresponding curves on $X \cap H$ with vertex $p$. In particular, $F_1, \dots, F_6$ are planes in $Y$. Also let the corresponding surface classes be $[F_0], [F_1], \dots, [F_6]$. 

\begin{lemma}
The surface classes $[F_0], [F_1], \dots, [F_6]$ are linearly independent in $H^4(Y, \bZ) \cap H^{2,2}(Y)$. Moreover, they are left invariant by $\sigma^*$.
\end{lemma}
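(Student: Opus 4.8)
\emph{Plan.} The $\sigma^*$-invariance is geometric and immediate, while linear independence will be reduced to the nondegeneracy of the intersection (Gram) matrix of the seven classes.

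\textbf{$\sigma^*$-invariance.} Each $F_i$ is the cone with vertex $p=[0,\dots,0,1]$ over the curve $\bar F_i$ lying in the cubic surface $X\cap H\subset(y_5=0)$. The involution $\sigma$ of \eqref{sigma} fixes $p$ and fixes the hyperplane $(y_5=0)$ pointwise, hence fixes every point of each $\bar F_i$. Since a ruling line of $F_i$ is spanned by $p$ and a point of $\bar F_i$ -- both $\sigma$-fixed -- the map $\sigma$ carries each ruling line to itself, so $\sigma(F_i)=F_i$ as a set and therefore $\sigma^*[F_i]=[F_i]$. In particular each $[F_i]$ lies in the invariant lattice $M$.

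\textbf{Linear independence.} Because the intersection pairing $H^4(Y,\bZ)\times H^4(Y,\bZ)\to\bZ$ is nondegenerate, it suffices to show that the Gram matrix $G=\big([F_i]\cdot[F_j]\big)_{0\le i,j\le 6}$ is nondegenerate: a nontrivial relation $\sum a_i[F_i]=0$ would force $G\cdot(a_i)=0$. First I would treat the $6\times 6$ block indexed by $1,\dots,6$, where each $F_i$ is a genuine plane in the smooth cubic fourfold $Y$. For such a plane $P\cong\bP^2$ the normal sequence $0\to N_{P/Y}\to\calO_P(1)^{\oplus 3}\to\calO_P(3)\to 0$ gives $c_2(N_{P/Y})=3$, so $[F_i]^2=3$. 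For $i\ne j$ the exceptional lines $\bar F_i,\bar F_j$ are disjoint on $X\cap H$, so the planes $F_i=\langle p,\bar F_i\rangle$ and $F_j=\langle p,\bar F_j\rangle$ meet only at $p$; as $\bar F_i\cup\bar F_j$ spans a $\bP^3$ off $p$, the two tangent planes span $T_pY$, the intersection is transverse, and $[F_i]\cdot[F_j]=1$. Hence this block equals $2I_6+\mathbf 1\mathbf 1^{\top}$, with determinant $2^5\cdot 8\ne 0$; already $[F_1],\dots,[F_6]$ are independent.

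\textbf{Incorporating $F_0$ and the main obstacle.} It remains to adjoin $F_0$, the cone over the twisted cubic $\bar F_0$, and to see $[F_0]\notin\Span([F_1],\dots,[F_6])$. Pairing a putative relation $[F_0]=\sum_{i\ge 1}a_i[F_i]$ with $h^2$ (the square of the hyperplane class) gives $\sum a_i=\deg F_0=3$, and pairing with $[F_j]$ gives $F_0\cdot F_j=2a_j+\sum_i a_i=2a_j+3$; since the $[F_j]$ form a single monodromy orbit, $F_0\cdot F_j$ is independent of $j$, forcing all $a_j$ equal and $F_0\cdot F_j=4$. Thus the relation is excluded as soon as $F_0\cdot F_j\ne 4$, and more robustly by computing the remaining entries $F_0\cdot F_j$ and $F_0^2$ outright. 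Here lies the main difficulty: unlike the other $F_i$, the cone $F_0$ is \emph{singular} at $p$, and all seven cones pass through $p$, so these intersection numbers are concentrated at the vertex and must be extracted by an excess-intersection computation. Concretely I would resolve the vertex via the blow-up $\Bl_pY$ of \propositionref{Y projection} (exceptional divisor $E\cong\bP^3$), replace the cones by their strict transforms -- ruled surfaces over the $\bar F_i$ -- and compute the products upstairs, subtracting the correction supported on $E$; since $\bar F_0$ is disjoint from the $\bar F_i$, all contributions come from $E$. Once the full matrix $G$ is known, nondegeneracy follows from the Schur complement of the already-invertible $6\times 6$ block, completing the proof.
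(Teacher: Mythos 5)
Your treatment of the $\sigma^*$-invariance is correct (and matches the paper's, which just observes that the cones are defined by equations in $y_0,\dots,y_4$ alone), and your computation of the $6\times 6$ block ($[F_i]^2=3$, $[F_i]\cdot[F_j]=1$, determinant $2^5\cdot 8$) is sound. But there is a genuine gap at exactly the point you flag as ``the main difficulty'': the entries $[F_0]\cdot[F_j]$ and $[F_0]^2$ are never computed, and your whole strategy stands or falls with them. Your shortcut via $h^2$ and the symmetry of the $[F_j]$ only reduces the problem to showing $[F_0]\cdot[F_j]\neq 4$; you then defer that verification to a blow-up/excess-intersection computation that is described but not carried out. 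As written, nothing in the proposal excludes $[F_0]\in\Span([F_1],\dots,[F_6])\otimes\bQ$, so linear independence of all seven classes is not established. (Two smaller points: the appeal to a ``single monodromy orbit'' is not justified at this stage of the paper -- the monodromy statements only appear later, in \propositionref{Te6} -- though it could be replaced by the evident symmetry of the construction; and for the record the true values are $[F_0]\cdot[F_j]=3$ and $[F_0]^2=7$, so your plan does succeed once completed: the full Gram matrix $G_N$ has determinant $64\neq 0$. Indeed the paper performs precisely the blow-up computation you outline, with $q^*[F_j]=[F_j']+i_*[V]$, $q^*[F_0]=[F_0']+3i_*[V]$, $[F_0']^2=-2$, $[F_j']^2=2$, $[F_0']\cdot[F_j']=0$ -- but only in the \emph{next} result, \lemmaref{N gram}.)

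It is also worth noting that the paper's own proof of this lemma is structurally different and avoids intersection numbers altogether. Using \propositionref{Y projection}, it identifies $\Bl_pY\cong\Bl_{X\cap H}Z$, invokes the blow-up decomposition $H^4(\Bl_{X\cap H}\bP^4,\bZ)\cong H^4(\bP^4,\bZ)\oplus H^2(X\cap H,\bZ)(-1)$ together with the injectivity of $g^*$ for the double cover $g:\Bl_{X\cap H}Z\to\Bl_{X\cap H}\bP^4$, and observes that the strict transform of $F_i$ is $g^{-1}(\psi^{-1}(\bar F_i))$; linear independence of $\bar F_0,\dots,\bar F_6$ in $H^2(X\cap H,\bZ)$ then transfers directly to independence of $[F_0],\dots,[F_6]$ in $H^4(Y,\bZ)$. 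That route needs no analysis of the singular vertex, which is exactly the step your argument cannot avoid and does not supply.
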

\begin{proof}
The equations of the cones $F_0, F_1, \dots, F_6$ only involves $y_0, \dots, y_4$. Therefore, the involution $\sigma: [y_0, \dots, y_4, y_5] \mapsto [y_0, \dots, y_4, -y_5]$ fixes them. Now we use \propositionref{Y projection} to prove $[F_0], [F_1], \dots, [F_6]$ are linearly independent. Let $f: Z \rightarrow \bP^4$ be the double cover of $\bP^4$ branched along $X+H$. Set $\varphi: \Bl_{X \cap H}Z \rightarrow Z$ (resp. $\psi: \Bl_{X \cap H} \bP^4 \rightarrow \bP^4$) to be the blowup of $Z$ (resp. $\bP^4$) along the locus ramified over the cubic surface $X \cap H$ (resp. the surface $X \cap H$). Let $j: F \hookrightarrow \Bl_{X \cap H}\bP^4$ be the exceptional divisor of $\Bl_{X \cap H}\bP^4$. Note that $\psi^*(X+H) \sim X'+H'+2F$ where $X'$ (resp. $H'$) is the strict transform of $X$ (resp. $H$). By \cite[\S3]{CvS_defcover} we have the following commutative diagram 
\begin{equation} \label{blow up cubic surface}
\begin{tikzcd}
\Bl_{X \cap H}Z \arrow{rr}{\varphi} \arrow{dd}{g} \arrow{rrdd}{\phi}
               &&Z \arrow{dd}{f}\\
&& \\
\Bl_{X \cap H}\bP^4 \arrow{rr}{\psi} & &\bP^4
\end{tikzcd}
\end{equation}
where $g: \Bl_{X \cap H}Z \rightarrow \Bl_{X \cap H} \bP^4$ is the double cover of $\Bl_{X \cap H} \bP^4$ along $X'+H'$. By \cite[Thm. 7.31]{Voisin_hodge1} there exists an isomorphism of Hodge structures $H^4(\bP^4, \bZ) \oplus H^2(X \cap H, \bZ)(-1) \stackrel{\psi^*+j_*\circ\psi_{|_F}^*}{\longrightarrow} H^4(\Bl_{X \cap H}\bP^4, \bZ)$. The covering map $g$ induces an injective homomorphism $g^*: H^4(\Bl_{X \cap H}\bP^4, \bZ) \rightarrow H^4(\Bl_{X \cap H}Z, \bZ)$. By \propositionref{Y projection} $\Bl_{X \cap H}Z$ is also the blown-up $\Bl_pY$ of $Y$ at the Eckardt point $p$ (the exceptional divisor is $g^{-1}(H')$). Thus we have $H^4(\Bl_{X \cap H}Z, \bZ) \cong H^4(Y, \bZ) \oplus H^0(p, \bZ)(-2)$. By our construction, the classes represented by $\bar{F}_0, \bar{F}_1, \dots, \bar{F}_6$ are linearly independent in $H^2(X \cap H, \bZ)$. The strict transform of $F_i$ ($0 \leq i \leq 6$) in $\Bl_pY = \Bl_{X \cap H}Z$ is $g^{-1}(\psi^{-1}(\bar{F}_i))$. It follows that $[F_0], [F_1], \dots, [F_6]$ are linearly independent in $H^4(Y, \bZ)$.
\end{proof}

Let us consider the rank $7$ sublattice $N \subset M$ generated by $[F_0], [F_1], \dots, [F_6]$. Later we will show that $N$ is a saturated sublattice of $H^4(Y, \bZ) \cap H^{2,2}(Y)$ and hence $N=M$.
\begin{lemma} \label{N gram}
The Gram matrix of the lattice $N$ is given as follows.
\begin{enumerate}
\item $[F_i] \cdot [F_i] = 3$ for $1 \leq i \leq 6$.
\item $[F_0] \cdot [F_0]=7$.
\item $[F_0]  \cdot [F_i] = 3$ and $[F_i] \cdot [F_j] = 1$ for $1 \leq i \neq j \leq 6$.
\end{enumerate}
In particular, the lattice $N$ is positive definite and $\mathrm{discr}(N) = 64$. Furthermore, the square $h^2$ of the hyperplane class $h$ can be expressed as $3[F_0]-[F_1]- \dots -[F_6]$ in $H^4(Y, \bZ)$.
\end{lemma}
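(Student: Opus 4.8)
The plan is to compute the ``easy'' entries of the Gram matrix---those involving only the planes $F_1, \dots, F_6$ and the hyperplane class---together with the linear relation $h^2 = 3[F_0] - [F_1] - \dots - [F_6]$, and then to deduce the two genuinely difficult entries $[F_0]^2$ and $[F_0]\cdot[F_i]$ \emph{purely formally} from this relation. The point is that $F_0$ is the cone over a twisted cubic and is singular at $p$, so computing its self-intersection directly would force an excess-intersection analysis at the vertex; the relation lets us sidestep this entirely.

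First I would record the intersection numbers that involve only the planes. Each $F_i$ ($1 \le i \le 6$) is the cone over a line $\bar{F}_i \subset H \cong \bP^3$, hence a linearly embedded $\bP^2 \subset Y$; from the normal bundle sequence $0 \to N_{F_i/Y} \to \mathcal{O}(1)^{\oplus 3} \to \mathcal{O}(3) \to 0$ on $\bP^2$ one computes $c_2(N_{F_i/Y}) = 3$, so $[F_i]^2 = 3$. Since $\bar{F}_i$ and $\bar{F}_j$ ($i \ne j$) are disjoint exceptional lines on the cubic surface, they are skew in $\bP^3$ and therefore span it; consequently the two planes $F_i = \langle p, \bar{F}_i\rangle$ and $F_j = \langle p, \bar{F}_j\rangle$ meet only at $p$, and transversally there. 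As the projective tangent space $T_pY$ equals the hyperplane $\{l=0\}\cong \bP^4$ that contains both planes, this transversality takes place inside $Y$, giving $[F_i]\cdot[F_j] = 1$. Finally $h^2 \cdot [F_i] = \deg F_i = 1$ and $(h^2)^2 = \deg Y = 3$.

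Next I would establish the relation $h^2 = 3[F_0] - \sum_{i=1}^6 [F_i]$. Writing $C_S = Y \cap T_pY$ for the cone over $S = X \cap H$, which is a hyperplane section and so represents $h$, I cut it with a second general hyperplane through $p$; the resulting surface represents $h^2$ and is the cone over a general hyperplane section $D \subset S$. Since $D$ lies in the class $3\bar{F}_0 - \sum \bar{F}_i$ and the cone construction is linear on cycle classes (linearly equivalent curves on $S$ have homologous cones in $C_S$, via the pencil joining them), its class is $3[F_0] - \sum[F_i]$, which is the relation. Intersecting the relation with $[F_i]$ then gives $1 = 3([F_0]\cdot[F_i]) - (3+5)$, so $[F_0]\cdot[F_i] = 3$; intersecting with $[F_0]$, using $h^2\cdot[F_0] = \deg F_0 = 3$ (which may also be recovered by intersecting the relation with $h^2$), gives $3 = 3[F_0]^2 - 18$, so $[F_0]^2 = 7$.

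For the remaining assertions, passing to the basis $(h^2, [F_1], \dots, [F_6])$ turns the Gram matrix into $2I_7 + J_7$ (all diagonal entries $3$, all off-diagonal entries $1$), whose eigenvalues $9$ and $2$ are positive---so $N$ is positive definite---and whose determinant is $2^6 \cdot 9 = 576$. Since the relation shows $\langle h^2, [F_1], \dots, [F_6]\rangle = \langle 3[F_0], [F_1], \dots, [F_6]\rangle$ has index $3$ in $N = \langle [F_0], \dots, [F_6]\rangle$, we obtain $\mathrm{discr}(N) = 576/3^2 = 64$. The main obstacle, and the step to handle with care, is the relation itself: both the transversality computation of $[F_i]\cdot[F_j]$ at the vertex and the claim that coning descends to a well-defined linear map on homology classes must be justified cleanly---once the relation is secured, everything else is formal linear algebra.
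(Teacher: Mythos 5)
Your proof is correct, and it inverts the logical order of the paper's argument in a genuinely different (and arguably cleaner) way. The paper computes the two hard entries $[F_0]\cdot[F_0]=7$ and $[F_0]\cdot[F_i]=3$ \emph{directly}: either by degenerating $\bar{F}_0$ into a sum of three lines on the cubic surface, so that $[F_0]=[P_0]+[P_0']+[P_0'']$ is a sum of three plane classes, and then invoking the intersection numbers of pairs of planes in a smooth cubic fourfold ($3$ for a plane with itself, $-1$ for two planes sharing a line, $+1$ for two planes meeting in a point, cf. \cite{Voisin_cubic}); or, alternatively, by passing to $\Bl_pY\cong\Bl_{X\cap H}Z$ via \propositionref{Y projection} and using the blow-up formula $q^*[F_j]=[F_j']+i_*[V]$, $q^*[F_0]=[F_0']+3i_*[V]$ together with the intersection numbers of the proper transforms. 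Only \emph{afterwards} does the paper derive $h^2=3[F_0]-[F_1]-\dots-[F_6]$, by exactly the cone-over-a-hyperplane-section argument you give. You instead establish the relation first and recover the hard entries by linear algebra, so you never intersect anything with the singular surface $F_0$: your only geometric inputs are the plane self-intersection number $3$ (your normal bundle computation is the standard proof of the fact the paper quotes), the transversality of $F_i\cap F_j$ at $p$ (which you justify correctly, since $T_pF_i\oplus T_pF_j$ fills out the $4$-dimensional space $T_pY=\{l=0\}$), the degree computations, and the fact that coning descends to a homomorphism on divisor classes. That last fact is indeed the step needing care, as you note --- but the paper's own proof of the relation uses it just as implicitly (``Because $D$ is a cone \dots we have $h^2=[D]=3[F_0]-\dots-[F_6]$''), so you assume nothing the paper does not; the clean justification is that coning pulls back rational functions along the projection from the vertex, hence preserves rational equivalence, and one then pushes forward to $H^4(Y,\bZ)$. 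Finally, your derivation of positive definiteness and $\mathrm{discr}(N)=64$ --- rewriting the form on the index-$3$ sublattice $\langle h^2,[F_1],\dots,[F_6]\rangle=\langle 3[F_0],[F_1],\dots,[F_6]\rangle$ as $2I_7+J_7$ with eigenvalues $9$ and $2$ and determinant $576=3^2\cdot 64$ --- is slicker than computing with the $7\times 7$ Gram matrix $G_N$ itself, and all entries agree with the paper's; the trade-off is that the paper's redundant direct computations serve as a consistency check (its subsequent remark re-derives $h^2\cdot[F_0]=3$, $h^2\cdot[F_i]=1$, $(h^2)^2=3$ as outputs, whereas for you these are inputs).
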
 
\begin{proof}
The classes $[F_1], \dots, [F_6]$ are represented by planes in $Y$. The class $[F_0]$ corresponds to a union of planes $P_0 + P_0' + P_0''$ where $P_0$ share a line with $P_0'$ (resp. $P_0''$) and the intersection $P_0' \cap P_0''$ is a point (see for example \cite{Reid_surface}). If two planes contained in a cubic fourfold intersect along a line then their intersection number is $-1$ (cf. \cite{Voisin_cubic}). Also, the self intersection of a plane in a cubic fourfold equals $3$. The Gram matrix of $N$ can be computed easily using these observations.  

Alternatively, we compute the intersection numbers using \propositionref{Y projection}. Denote the blowup of the point $p \in Y$ by $q: \Bl_pY \rightarrow Y$. Let $i: E \hookrightarrow \Bl_pY$ be the exceptional divisor. By the blow-up formula (cf. \cite[Thm. 6.7, Cor. 6.7.1]{Fulton_intersection}) we have $q^*[F_j] = [F_j'] + i_*[V]$ ($1 \leq j \leq 6$) and $q^*[F_0] = [F_0'] + 3i_*[V]$ where $[F_j']$ and $[F_0']$ denote the corresponding proper transforms and $V$ is a $2$-dimensional linear space of $E \cong \bP^3$. Using \propositionref{Y projection} and Diagram (\ref{blow up cubic surface}) one computes that $[F_0'] \cdot [F_0'] = -2$, $[F_j'] \cdot [F_j'] = 2$ and $[F_0'] \cdot [F_j'] = 0$. Now we get $[F_j] \cdot [F_j] = q^*[F_j] \cdot q^*[F_j] = [F_j'] \cdot [F_j'] + 2[F_j'] \cdot i_*[V] + i_*[V] \cdot i_*[V] =2 + 2 \times 1 -1 = 3$ (see \cite{Fulton_intersection} Example 8.3.9). Similarly, one can verify that $[F_0]\cdot[F_0]=7$ and $[F_0] \cdot [F_j] = 3$. It is clear that $[F_j] \cdot [F_{j'}] = 1$ for $1 \leq j \neq j' \leq 6$.  

Take another general linear form $l'(y_0, \dots, y_4)$ and consider the surface $D$ in $Y$ cut out by $l=l'=0$. Clearly, the surface $D$ represents $h^2$ where $h$ is the hyperplane class of $Y$. Because $D$ is a cone (with vertex $p$) over the hyperplane section of the cubic surface $X \cap H$, we have $h^2 = [D] = 3[F_0]-[F_1]- \dots -[F_6]$. 
\end{proof}

\begin{remark}
A direct computation shows the following:
\begin{itemize}
\item $h^2 \cdot [F_0] = (3[F_0]-[F_1]- \dots -[F_6]) \cdot [F_0] = 3$;
\item $h^2 \cdot [F_i] =(3[F_0]-[F_1]- \dots -[F_6]) \cdot [F_i] = 1$ for $1 \leq i \leq 6$; 
\item $h^2 \cdot h^2 =(3[F_0]-[F_1]- \dots -[F_6]) \cdot (3[F_0]-[F_1]- \dots -[F_6]) = 3$. 
\end{itemize}
In particular, the cubic fourfold $Y$ is special (cf.\cite{Hassett_cubic}) with discriminant $8$ or $12$. 
\end{remark}

To show that $N$ is a saturated sublattice of $H^4(Y,\bZ) \cap H^{2,2}(Y)$ we need to compute its discriminant group. By \lemmaref{N gram} the Gram matrix $G_N$ of $N$ (with respect to $[F_0], [F_1], \dots, [F_6]$) is given by the following matrix.
$$
G_N=
\begin{pmatrix} 
7 & 3 & 3 & 3 & 3 & 3 & 3 \\
3 & 3 & 1 & 1 & 1 & 1 & 1 \\
3 & 1 & 3 & 1 & 1 & 1 & 1 \\
3 & 1 & 1 & 3 & 1 & 1 & 1 \\
3 & 1 & 1 & 1 & 3 & 1 & 1 \\
3 & 1 & 1 & 1 & 1 & 3 & 1 \\
3 & 1 & 1 & 1 & 1 & 1 & 3 \\
\end{pmatrix}
$$
The inverse Gram matrix $G_N^{-1}$ is given as follows which allows us to compute the dual lattice $N^* \subset N \otimes \bQ$ and the discriminant group $A_N = N^*/N$. Specifically, we consider the linear combinations of $[F_0], [F_1], \dots, [F_6]$ with coefficients given by the row vectors (or the column vectors) of $G_N^{-1}$. Denote them by $[F_0]^*, [F_1]^*, \dots, [F_6]^*$ respectively. Note that $[F_i] \cdot [F_j]^* = \delta_{ij}$. By abuse of notation, $[F_0]^*, [F_1]^*, \dots, [F_6]^*$ also denote the corresponding elements in $A_N = N^*/N$. It is straightforward to check that the discriminant group $A_N$ is isomorphic to $(\bZ/2\bZ)^6$ and $\{[F_1]^*, \dots, [F_6]^*\}$ is a basis. 
$$
G_N^{-1}=
\renewcommand\arraystretch{1.2}
\begin{pmatrix} 
4 & -\frac32 & -\frac32 & -\frac32 & -\frac32 & -\frac32 & -\frac32 \\
-\frac32 &1 & \frac12 & \frac12 & \frac12 & \frac12 & \frac12 \\
-\frac32 & \frac12 & 1 & \frac12 & \frac12 & \frac12 & \frac12 \\
-\frac32 & \frac12 & \frac12 & 1 & \frac12 & \frac12 & \frac12 \\
-\frac32 & \frac12 & \frac12 & \frac12 & 1 & \frac12 & \frac12 \\
-\frac32 & \frac12 & \frac12 & \frac12 & \frac12 & 1 & \frac12 \\
-\frac32 & \frac12 & \frac12 & \frac12 & \frac12 & \frac12 & 1 \\
\end{pmatrix}
$$

\begin{lemma} \label{disc N}
Notations as above. The discriminant group $A_N$ of $N$ is isomorphic to $(\bZ/2\bZ)^6$. The discriminant bilinear form $$b_N: A_N \times A_N \rightarrow \bQ/\bZ$$ is given by the following matrix (with respect to the basis $\{[F_1]^*, \dots, [F_6]^*\}$ of $A_N$).
$$
\renewcommand\arraystretch{1.2}
\begin{pmatrix} 
0 & \frac12 & \frac12 & \frac12 & \frac12 & \frac12 \\
 \frac12 & 0 & \frac12 & \frac12 & \frac12 & \frac12 \\
 \frac12 & \frac12 & 0 & \frac12 & \frac12 & \frac12 \\
 \frac12 & \frac12 & \frac12 & 0 & \frac12 & \frac12 \\
 \frac12 & \frac12 & \frac12 & \frac12 & 0 & \frac12 \\
 \frac12 & \frac12 & \frac12 & \frac12 & \frac12 & 0 \\
\end{pmatrix}
$$
\end{lemma}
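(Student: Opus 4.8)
The plan is to read everything off the inverse Gram matrix $G_N^{-1}$ displayed above. By construction the vectors $[F_0]^*, \dots, [F_6]^* \in N^* \subset N \otimes \bQ$ are the dual basis to $[F_0], \dots, [F_6]$, so they satisfy $[F_i] \cdot [F_j]^* = \delta_{ij}$ and consequently $[F_i]^* \cdot [F_j]^* = (G_N^{-1})_{ij}$. Recall that for the positive definite (in particular nondegenerate integral) lattice $N$ one has $N \subseteq N^* \subseteq N\otimes\bQ$, and that the discriminant bilinear form is the $\bQ/\bZ$-valued pairing on $A_N = N^*/N$ obtained by reducing the $\bQ$-valued extension of the intersection form modulo $\bZ$; it is well defined precisely because $N$ is integral. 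Thus, once the claimed basis of $A_N$ is in place, the entries of $b_N$ are literally the corresponding entries of $G_N^{-1}$ taken modulo $\bZ$.

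First I would pin down the group structure. Reading off the rows of $G_N^{-1}$ and clearing denominators gives $2[F_0]^* = 8[F_0] - 3([F_1]+\dots+[F_6])$ and $2[F_i]^* = -3[F_0] + ([F_1]+\dots+[F_6]) + [F_i]$ for $1 \le i \le 6$, all of which lie in $N$. Hence every generator of $A_N$ is $2$-torsion, so $A_N$ is an elementary abelian $2$-group; since $\mathrm{discr}(N) = \det G_N = 64$ by \lemmaref{N gram}, we have $|A_N| = 64$ and therefore $A_N \cong (\bZ/2\bZ)^6$. It then remains to check that the six classes $[F_1]^*, \dots, [F_6]^*$ (omitting $[F_0]^*$) already form a basis, equivalently that they are $\mathbb{F}_2$-independent in $A_N$. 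For this I would suppose $\sum_{i \in S} [F_i]^* \in N$ for some $S \subseteq \{1,\dots,6\}$ and examine the coefficients: the coefficient on $[F_0]$ is $-\tfrac32|S|$, which is integral only if $|S|$ is even, whereas the coefficient on $[F_j]$ for $j \in S$ equals $\tfrac12|S| + \tfrac12$, which is then non-integral. This forces $S = \emptyset$, giving independence; since $|A_N| = 2^6$, the six classes form a basis, and in particular $[F_0]^*$ is redundant modulo $N$.

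With the basis fixed, the computation of $b_N$ is immediate: the diagonal entries $[F_i]^* \cdot [F_i]^* = (G_N^{-1})_{ii} = 1$ reduce to $0$ in $\bQ/\bZ$, while the off-diagonal entries $[F_i]^* \cdot [F_j]^* = (G_N^{-1})_{ij} = \tfrac12$ for $1 \le i \ne j \le 6$ reduce to $\tfrac12$, which is exactly the displayed matrix.

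The main obstacle is not any single hard step but the bookkeeping needed to justify that the six vectors $[F_1]^*, \dots, [F_6]^*$, rather than all seven dual vectors, constitute a basis of $A_N$. This is the only place where one must do something beyond reducing $G_N^{-1}$ modulo $\bZ$, and the parity argument above is what makes $[F_0]^*$ vanish modulo $N$. Everything else is a direct reduction of the entries of $G_N^{-1}$ modulo $\bZ$.
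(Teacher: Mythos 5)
Your proposal is correct and follows the same route as the paper: the paper computes $G_N^{-1}$, takes the dual basis $[F_0]^*,\dots,[F_6]^*$, and then asserts that it is ``straightforward to check'' that $A_N \cong (\bZ/2\bZ)^6$ with basis $\{[F_1]^*,\dots,[F_6]^*\}$, the bilinear form being read off from $G_N^{-1}$ modulo $\bZ$. Your write-up simply supplies the details the paper omits (the $2$-torsion relations, the count $|A_N|=\det G_N = 64$, and the parity argument for $\mathbb{F}_2$-independence), and all of these checks are accurate.
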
  

Now we show that $N \subset H^4(Y,\bZ) \cap H^{2,2}(Y)$ is saturated. As a result, we have $N= M$ and the natural embedding of $N$ into $H^4(Y, \bZ) \cap H^{2,2}(Y)$ is primitive.
\begin{proposition} \label{primitive embedding}
 The lattice $N$ generated by $[F_0], [F_1], \dots, [F_6]$ is saturated in $H^4(Y, \bZ) \cap H^{2,2}(Y)$.
\end{proposition}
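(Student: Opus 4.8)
The plan is to identify $M$ with the full $\sigma^*$-invariant sublattice of the integral cohomology and then reduce the statement to a discriminant count. By \lemmaref{sigma decomposition} the invariant part $H^4_{\mathrm{prim}}(Y,\bQ)_{+1}$ is purely of type $(2,2)$ (Hodge numbers $[0,0,6,0,0]$), and the hyperplane class $h^2$ is $\sigma^*$-invariant and of type $(2,2)$ as well. Hence every $\sigma^*$-invariant rational class is a Hodge class, so $H^4(Y,\bZ)^{\sigma^*}\subset H^4(Y,\bZ)\cap H^{2,2}(Y)$, and therefore $M=H^4(Y,\bZ)^{\sigma^*}$. The fixed lattice of an involution is always primitive, so $M$ is a saturated sublattice of the unimodular lattice $H^4(Y,\bZ)$. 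Since $N$ is generated by $\sigma^*$-invariant classes, $N\subset M$ and $\sigma^*$ acts as the identity on $N\otimes\bQ=M\otimes\bQ$; consequently a class $x\in H^4(Y,\bZ)\cap H^{2,2}(Y)$ lying in $N\otimes\bQ$ is automatically $\sigma^*$-invariant and integral, hence lies in $M$. Thus the saturation of $N$ in $H^4(Y,\bZ)\cap H^{2,2}(Y)$ is exactly $M$, and the proposition is equivalent to the equality $N=M$.

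It remains to prove $[M:N]=1$. Since $\mathrm{discr}(N)=64=2^6$ by \lemmaref{N gram} and $\mathrm{discr}(N)=\mathrm{discr}(M)\cdot[M:N]^2$, this is the same as $\mathrm{discr}(M)=64$. Two structural facts organize the search for possible enlargements. First, $M$ is the $(+1)$-eigenlattice of an involution on a unimodular lattice, so its discriminant group $A_M$ is $2$-elementary, and overlattices $N\subset M$ correspond to totally isotropic subspaces of the $\mathbb{F}_2$-valued bilinear form $b_N$ of \lemmaref{disc N}. Second, $h^2\in N$ is a characteristic vector of $H^4(Y,\bZ)$ — equivalently $H^4_{\mathrm{prim}}(Y,\bZ)=(h^2)^{\perp}$ is even, a known property of cubic fourfolds — so the assignment $\bar x\mapsto(x\cdot x-x\cdot h^2)\bmod 2$ is a well-defined $\mathbb{F}_2$-quadratic refinement $\Phi$ of $b_N$ on $A_N$, and every class of $M/N$ must be singular for $\Phi$. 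Using the identity $[F_i]^*=\tfrac12([F_i]-h^2)$ one computes $\Phi$ explicitly and finds it nondegenerate, which already excludes many candidate glue vectors but, being an $\mathbb{F}_2$-form of rank $6$, not all of them.

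The crux — and the step I expect to be the main obstacle — is to exclude the remaining enlargements, i.e. to show that no half-integral combination such as $\tfrac12([F_i]-h^2)$ (more generally $v_S=\sum_{i\in S}[F_i]^*$) is an integral cohomology class on $Y$. This cannot be decided from intersection numbers alone and requires the genuine integral structure of $H^4(Y,\bZ)$. The natural route is the birational model of \propositionref{Y projection}: since $\mathrm{Bl}_pY\cong\mathrm{Bl}_{X\cap H}Z$ with $Z\to\bP^4$ the double cover branched along $X+H$, and since $H^4(\mathrm{Bl}_{X\cap H}\bP^4,\bZ)\cong H^4(\bP^4,\bZ)\oplus H^2(X\cap H,\bZ)(-1)$, the $\sigma^*$-invariant integral lattice is the image of the injective pullback $g^*$ along the covering $g\colon\mathrm{Bl}_{X\cap H}Z\to\mathrm{Bl}_{X\cap H}\bP^4$. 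One then checks that this image, generated exactly by the cone classes $[F_0],\dots,[F_6]$, is already saturated in $H^4(\mathrm{Bl}_{X\cap H}Z,\bZ)^{\iota}$; equivalently, that the double cover introduces no $2$-divisibility beyond what is visible in $N$.

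As an alternative finish one can compute $\mathrm{discr}(M)=2^{a}$ directly through the involution: $a$ equals the rank over $\mathbb{F}_2$ of $(\sigma^*-\mathrm{id})$ acting on $H^4(Y,\bZ)\otimes\mathbb{F}_2$, which can be pinned down from the topology of the fixed locus $Y^{\sigma}=X\sqcup\{p\}$ (with $X=Y\cap\{y_5=0\}$ the cubic threefold) via Smith theory. Either way, the content of the proposition is the verification that $a=6$, equivalently $[M:N]=1$; the lattice bookkeeping above reduces everything to this single numerical fact.
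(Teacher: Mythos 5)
Your reduction of the proposition to the equality $N=M$ (via the observation that $H^4(Y,\bZ)^{\sigma^*}$ consists of Hodge classes and that the fixed lattice of an involution is automatically saturated), and your Nikulin-style setup --- enlargements of $N$ correspond to isotropic subgroups of $(A_N,b_N)$, further constrained by the quadratic refinement $\Phi$ coming from $h^2$ being characteristic --- agree with the first half of the paper's argument. But there is a genuine gap exactly where you yourself locate ``the crux'': the surviving glue vectors are never excluded. Concretely, your parity form $\Phi$ only kills the candidates $\tfrac12([F_i]-[F_j])$, $\tfrac12([F_0]-[F_i]-[F_j]-[F_k])$ and $\tfrac12(h^2-[F_0])$, for which the would-be integral class $x$ satisfies $x\cdot h^2=0$ with $x^2$ odd, contradicting evenness of $H^4_{\mathrm{prim}}(Y,\bZ)$. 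The candidates $\tfrac12(h^2-[F_i])$ and $\tfrac12([F_0]-[F_i])$ (each giving $x^2=x\cdot h^2=1$, hence a rank-two sublattice of discriminant $2$) and $\tfrac12([F_i]+[F_j]-[F_k]-[F_l])$ (giving $x^2=2$, $x\cdot h^2=0$, discriminant $6$) are $\Phi$-isotropic, so the parity argument is silent on precisely the hard cases.

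The paper disposes of these remaining cases by a geometric input you never invoke: such a class $x$ would make $Y$ a special cubic fourfold with a labelling of discriminant $2$, respectively $6$, and by Hassett (\cite[\S 4.4, \S 4.2]{Hassett_cubic}; equivalently, the period point of a smooth cubic fourfold avoids $\calH_\infty\cup\calH_\Delta$) a smooth cubic fourfold admits no such labelling. Your two proposed substitutes are not carried out, and the first is circular as stated: for the double cover $g\colon \Bl_{X\cap H}Z\to\Bl_{X\cap H}\bP^4$ it is not automatic that the image of $g^*$ equals the invariant lattice --- one only gets $2\,H^4(\Bl_{X\cap H}Z,\bZ)^{\iota}\subset \mathrm{Im}(g^*)$ from $g^*g_*=1+\iota^*$, so $\mathrm{Im}(g^*)$ may have index a power of $2$ in the invariant lattice, and bounding that index is exactly the $2$-divisibility question you are trying to settle. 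The Smith-theory route likewise ends by asserting that ``the content of the proposition is the verification that $a=6$'' without verifying it. As written, the proof is incomplete; the missing ingredient is the appeal to the theory of special cubic fourfolds (or some equally substantive replacement for it).
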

\begin{proof}
Assume the natural embedding $N \subset H^4(Y, \bZ) \cap H^{2,2}(Y)$ is not saturated. Then it factors as $N \subsetneq \mathrm{Sat}(N) \subset H^4(Y, \bZ)$ where $\mathrm{Sat}(N)$ denotes the saturation of $N$ in $H^4(Y, \bZ)$. Thus, $\mathrm{Sat}(N)$ is a nontrivial overlattice of $N$ and corresponds to a nontrivial isotropic subgroup of $(A_N, b_N: A_N \times A_N \rightarrow \bQ/\bZ)$ (cf. \cite[Prop. 1.4.1]{Nikulin_lattice}). By \lemmaref{disc N} all the elements of $A_N$ are isotropic. These elements are linear combinations of $[F_1]^*, \dots, [F_6]^*$ with coefficients either $0$ or $1$. There are $6$ cases depending on the number of nonzero coefficients. For example let us suppose $[F_i]^*$ is contained in the isotropic subgroup. From the expression of $G_N^{-1}$ it is easy to see $[F_i]^* \equiv \frac12(h^2-[F_i]) \mod N$. It follows that the element $\frac12(h^2-[F_i])$ belongs to $H^4(Y, \bZ) \cap H^{2,2}(Y)$ (i.e. $h^2-[F_i]$ is divisible by $2$). The other cases are similar. In conclusion, $N \neq \mathrm{Sat}(N)$ if and only if some of the following elements are $2$-divisible in $H^4(Y, \bZ) \cap H^{2,2}(Y)$: 
\begin{enumerate}
\item $h^2 - [F_i]$;
\item $[F_i] - [F_j]$;
\item $[F_0]-[F_i]-[F_j]-[F_k]$;
\item $[F_i] +[F_j] -[F_k] -[F_l]$;
\item $[F_0] - [F_i]$;
\item $h^2-[F_0]$
\end{enumerate} 
for $1 \leq i,j,k,l \leq 6$. Let us do a case by case analysis.
\begin{enumerate}
\item Write $h^2 - [F_i] = 2x$ for some $x \in H^4(Y, \bZ)\cap H^{2,2}(Y)$. It is easy to see that $x\cdot h^2 = x^2=1$. Then $Y$ is a special cubic fourfold (cf. \cite[Def. 3.1.2]{Hassett_cubic}) labeled  by the rank $2$ lattice generated by $h^2$ and $x$. The corresponding discriminant $d =2$ (see op. cit. Definition 3.2.1). By op. cit. Section 4.4 this can not happen for the smooth cubic fourfold $Y$.
\item Write $[F_i] - [F_j] = 2x$ for some $x \in H^4(Y, \bZ)\cap H^{2,2}(Y)$. One computes that $x \cdot h^2 = 0$ (which implies that $x^2$ is even) and $x^2=1$. Clearly, this is a contradiction. 
\item This case can not happen. The argument is similar to that for Case (2).
\item Write $[F_i] +[F_j] -[F_k] -[F_l]=2x$ for some $x \in H^4(Y, \bZ)\cap H^{2,2}(Y)$. A direct computation shows that $h^2 \cdot x = 0$ and $x^2=2$. Then $Y$ is a special cubic fourfold with discriminant $6$. This is impossible because $Y$ is smooth (cf. \cite[\S 4.2]{Hassett_cubic}).
\item This case can not happen. The argument is similar to that for Case (1).
\item This case can not happen. The argument is similar to that for Case (2).
\end{enumerate} 
\end{proof}

We give a Hodge theoretic condition for smooth cubic fourfolds to admit Eckardt points.
\begin{proposition} \label{Hodge Eckardt}
Let $V \subset \bP^5$ be a smooth cubic fourfold. Suppose there exists a primitive embedding of $M \hookrightarrow H^4(V, \bZ) \cap H^{2,2}(V)$ sending $h^2 \in M$ to the square of the hyperplane class of $V$. Then $V$ contains an Eckardt point (and hence can be constructed from a cubic threefold and a hyperplane as in Section \ref{associated cubic4}). 
%Suppose $V \subset \bP^5$ is a smooth cubic fourfold satisfying $H^4(V, \bZ) \cap H^{2,2}(V) \cong M$ such that $h^2 \in M$ corresponds to the square of the class of a hyperplane section of $V$. Then $V$ contains an Eckardt point (and hence can be constructed from a cubic threefold and a hyperplane as in Section \ref{associated cubic4}). 
\end{proposition}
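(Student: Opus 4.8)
The plan is to invert the construction of \situationref{associated cubic4} at the level of Hodge theory: from the lattice datum I will produce a projective involution of $V$ of exactly the type occurring in \lemmaref{leminvo}, and then read off the Eckardt point. The starting point is to promote the polarization to a Hodge isometry. Set $T = M^{\perp}_{H^4(V,\bZ)}$ and consider $\gamma = \id_M \oplus (-\id_T)$ on $M \oplus T$. Since $V$ is a smooth fourfold, $H^4(V,\bZ)$ is unimodular, so the primitivity of $M$ gives an anti-isometry $A_M \cong A_T$; as $A_M \cong (\bZ/2\bZ)^6$ is $2$-torsion by \lemmaref{disc N}, the map $-\id$ agrees with $\id$ on the gluing subgroup, and Nikulin's theory shows that $\gamma$ extends to an isometry $\tilde\gamma \in O(H^4(V,\bZ))$. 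Because $M \subset H^{2,2}(V)$ and $T$ carries the induced (sub-)Hodge structure, $\tilde\gamma$ is a Hodge isometry fixing $h^2 \in M$, and $\tilde\gamma \neq \id$ since $T \neq 0$.

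Next I would realize $\tilde\gamma$ geometrically and identify its fixed locus. By the Torelli theorem for cubic fourfolds \cite{Voisin_cubic}, the Hodge isometry $\tilde\gamma$, which fixes the polarization class $h^2$, is induced by an automorphism $\tau$ of $V$; since automorphisms of smooth cubic fourfolds are linear, $\tau \in \PGL(6)$, and as $\tilde\gamma^2 = \id$ acts faithfully, $\tau$ is a nontrivial projective involution preserving $V$. Its fixed locus is $\bP(W_+) \sqcup \bP(W_-)$ for the $\pm 1$-eigenspaces $W_\pm$ of $\tau$ on $\bC^6$, so the only possibilities for $(\dim \bP(W_+), \dim \bP(W_-))$ are $(4,0)$, $(3,1)$, $(2,2)$. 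To exclude the last two I compute the topological Lefschetz number: all cohomology of $V$ is even, and $\tilde\gamma$ has $+1$-eigenspace $M$ of rank $7$ and $-1$-eigenspace $T$ of rank $16$ on $H^4$ (using \lemmaref{sigma decomposition} and \lemmaref{primitive embedding}), so $L(\tau) = 1 + 1 + \tr(\tilde\gamma \mid H^4) + 1 + 1 = 4 + (7-16) = -5$. On the other hand $L(\tau) = \chi(V^{\tau})$, and in the cases $(3,1)$ and $(2,2)$ every component of $V^\tau$ is a linear space, a (possibly singular) cubic curve, or a (possibly singular) cubic surface, each of nonnegative Euler characteristic, forcing $\chi(V^\tau) \geq 0$. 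This leaves only $(4,0)$, i.e. $V^\tau$ is a hyperplane together with a point.

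It then remains to conclude. Diagonalizing $\tau$ as $[y_0:\dots:y_4:y_5] \mapsto [y_0:\dots:y_4:-y_5]$, the $\tau$-invariance of the defining cubic allows only even powers of $y_5$, so $F = f(y_0,\dots,y_4) + \ell(y_0,\dots,y_4)\,y_5^2$; hence $p = [0:\dots:0:1]$ lies on $V$ and $\ell \not\equiv 0$ (otherwise $V$ is singular at $p$). Thus the hypotheses of \lemmaref{leminvo} hold, and by \lemmaref{(X,H) smooth} the pair $(X,H) = (\{f = y_5 = 0\},\, \{\ell = y_5 = 0\})$ consists of a smooth cubic threefold and a transverse hyperplane, with $p$ an Eckardt point; therefore $V$ is the cubic fourfold associated to $(X,H)$ as in \situationref{associated cubic4}.

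I expect the genuine obstacle to be the realization step, namely guaranteeing that the abstract lattice isometry $\tilde\gamma$ is induced by a biregular automorphism. This is where the full strength of Voisin's Torelli theorem is needed, together with the (standard but not entirely formal) verification that $\tilde\gamma$ fixes $h^2$ and preserves the positive cone, so that it is an admissible/effective Hodge isometry rather than merely an abstract one; once $\tau$ is in hand, the Lefschetz computation pinning down its fixed locus is routine.
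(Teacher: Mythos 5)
Your route is genuinely different from the paper's. The paper never produces an automorphism of $V$: it uses Voisin's existence result (\cite[\S 3]{Voisin_cubic}, see also \cite[\S 4.1]{Hassett_cubic}) that the discriminant-$8$ sublattices $\langle h_V^2,[F_i]_V\rangle$ force $V$ to contain actual planes $P_1,\dots,P_6$, notes that $[F_i]_V\cdot[F_j]_V=1$ makes two of them meet in a single point $p$, and then shows by an explicit manipulation of the equation of the cubic threefold $V\cap\Pi$ (where $\Pi$ is the $\bP^4$ spanned by the two planes) that $V\cap \Pi$ is a cone with vertex $p$, so $p$ is an Eckardt point. You instead invert the construction of Section \ref{associated cubic4} Hodge-theoretically. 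Most of your steps check out: the gluing of $\id_M\oplus(-\id_T)$ to an integral involution $\tilde\gamma$ of $H^4(V,\bZ)$ is legitimate ($H^4(V,\bZ)\cong I_{21,2}$ is unimodular, $M$ is primitive, $A_M\cong(\bZ/2\bZ)^6$ is $2$-torsion so $-\id=\id$ on the glue group, and Nikulin's theory applies to odd lattices at the level of bilinear forms, as the paper itself notes); $\tilde\gamma$ is a Hodge isometry because $H^{3,1}(V)\subset T\otimes\bC$; and the Lefschetz computation $L(\tau)=4+(7-16)=-5$ correctly excludes the eigenspace types $(3,1)$ and $(2,2)$, whose fixed loci have nonnegative Euler characteristic, while the $(4,0)$ case is consistent ($\chi$ of a smooth cubic threefold is $-6$). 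Indeed your $\tilde\gamma$ is exactly $\sigma^*$ for the Eckardt involution, cf.\ \lemmaref{sigma decomposition}. Your approach is essentially the ``symmetric cubic fourfolds'' viewpoint of Yu--Zheng \cite{YZ_symcubic}; it is more conceptual, while the paper's is more elementary and self-contained.

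The one genuine weak point is the realization step, and you correctly sensed it. The claim ``the Hodge isometry $\tilde\gamma$ fixing $h^2$ is induced by an automorphism $\tau$ of $V$'' does \emph{not} follow from the Torelli theorem as quoted in the paper (\theoremref{Voisin torelli}), which is an open immersion statement for the \emph{coarse} moduli space: it lets one conclude that two cubics with the same period point are abstractly isomorphic (this is exactly how it is used in \theoremref{P0 injective}), but not that a prescribed Hodge self-isometry is of the form $\tau^*$. For that you need the strong (equivariant) form of global Torelli, $\Aut(V)\stackrel{\sim}{\rightarrow}\Aut_{\Hdg}(H^4(V,\bZ),h^2)$, which is strictly stronger than Voisin's original statement; it is true and available in the literature (via Charles' proof using Markman--Verbitsky and the Fano variety of lines, Huybrechts' book on cubic hypersurfaces, and it is the standing input of \cite{YZ_symcubic}), so the gap is one of citation rather than of substance. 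Two small remarks: the orientation issue you worry about at the end is vacuous --- any Hodge isometry fixing $h^2$ rotates the negative-definite plane $\langle\re\omega,\im\omega\rangle$ and hence automatically preserves its orientation; and in the final step you should also dispose of the possibility $F\circ\tau=-F$, which forces $F=y_5\,(q+cy_5^2)$ and hence $V$ reducible, contradicting smoothness.
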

\begin{proof}
We first show that $V$ contains two planes which intersect at one point. Write the class of a hyperplane section of $V$ by $h_V$. Denote the class corresponding to $[F_i] \in M$ by $[F_i]_V$ ($0 \leq i \leq 6$). Consider (the saturation of) the rank two lattice generated by $h_V^2$ and $[F_i]_V$ ($1 \leq i \leq 6$). Clearly, the discriminant is $8$. By \cite[\S3]{Voisin_cubic} (see also \cite[\S4.1]{Hassett_cubic}) $V$ contains a plane $P_i$ whose class is $[F_i]_V$. Choose $i,j$ with $1 \leq i \neq j \leq 6$. Because $[F_i]_V \cdot [F_j]_V=1$, the corresponding planes $P_i$ and $P_j$ intersect at one point. (In a similar way one can show that $V$ contains at least $27$ planes.)

Choose two such planes $P$ and $P'$. Suppose $P \cap P' =\{p\}$. Consider the linear subspace $\Pi \cong \bP^4$ generated by $P$ and $P'$ in $\bP^5$. (In fact, the $27$ planes in $V$ are all contained in the hyperplane $\Pi$.) The cubic threefold $V \cap \Pi$ contains both $P$ and $P'$. Let $x_0, \dots, x_5$ be the homogeneous coordinates on $\bP^5$. Without loss of generality we assume that $P=(x_0=x_1=x_2=0)$ and $P'=(x_0=x_3=x_4=0)$. Note that $p= [0,0,0,0,0,1]$ and $\Pi = (x_0=0)$ (N.B. $x_1, \dots, x_5$ are considered coordinates on $\Pi$). Because $V \cap \Pi$ contains $P$, the equation of $V \cap \Pi \subset \Pi$ can be written as $x_3^2l_3+x_3q_3+x_4^2l_4+x_4q_4+x_5^2l_5+x_5q_5+c=0$ where $l_i$, $q_j$ and $c$ are homogeneous polynomials in $x_1$, $x_2$ of degree $1$, $2$ and $3$ respectively. But $P'$ is also contained in $V \cap \Pi$. It follows that $x_5^2l_5+x_5q_5+c$ is the zero polynomial. The equation of $V \cap \Pi \subset \Pi$ does not contain $x_5$, so $V \cap \Pi$ is a cone with vertex $p$ over a cubic surface (N.B. $V \cap \Pi$ can not be a cone with vertex a line because $V$ is smooth). Using \cite[Lem. 2.12]{CC_star} it is easy to show that $p$ is an Eckardt point of V. 
\end{proof}

Now let us focus on the lattice $M$ ($=N$) which is a positive definite lattice of rank $7$ generated by $[F_0], [F_1], \dots, [F_6]$. The natural embedding $M \subset H^4(Y, \bZ) \cap H^{2,2}(Y)$ is primitive. The Gram matrix $G_M (=G_N)$ and the discriminant group $(A_M, b_M) (=(A_N, b_N))$ have been computed in \lemmaref{N gram} and \lemmaref{disc N}. Note that $M$ is an odd lattice. However, the primitive part $M \cap H^4_{\mathrm{prim}}(Y, \bZ) = (h^2)_M^{\perp}$ is even and can be described explicitly.

\begin{proposition} \label{h2 perp}
The primitive part $M \cap H^4_{\mathrm{prim}}(Y, \bZ)$ of $M$ is an even lattice. More precisely, we have $M \cap H^4_{\mathrm{prim}}(Y, \bZ) \cong E_6(2)$.
\end{proposition}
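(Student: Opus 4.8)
The plan is to build an explicit isometry between $M\cap H^4_{\mathrm{prim}}(Y,\bZ)$ and $E_6(2)$ by transporting the intersection form on $M$ back to the second cohomology of the cubic surface $S:=X\cap H$. Recall the classical picture of a smooth cubic surface as $\bP^2$ blown up at six points: with the basis $\bar F_0,\dots,\bar F_6$ of $H^2(S,\bZ)$ used before \lemmaref{N gram} (pull-back of a line and the six exceptional curves), the intersection form is $\mathrm{diag}(1,-1,\dots,-1)$, the hyperplane (anticanonical) class is $\bar h=3\bar F_0-\bar F_1-\dots-\bar F_6$ with $\bar h^2=3$, and it is classical that the primitive sublattice $\bar h^\perp\subset H^2(S,\bZ)$ is isometric to the negative definite root lattice $E_6$. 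Since $h^2\in M$, we also have $M\cap H^4_{\mathrm{prim}}(Y,\bZ)=(h^2)^\perp_M$, the orthogonal complement of $h^2$ inside $M$; this rank-$6$ lattice is what must be identified.

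First I would introduce the $\bZ$-linear isomorphism $\phi\colon H^2(S,\bZ)\to M$ determined by $\bar F_i\mapsto[F_i]$ for $0\le i\le 6$ (both are free of rank $7$ on these bases). By the last assertion of \lemmaref{N gram}, $\phi(\bar h)=3[F_0]-[F_1]-\dots-[F_6]=h^2$. Using $[F_0]\cdot h^2=3$ and $[F_i]\cdot h^2=1$ from the remark following \lemmaref{N gram}, together with $\bar F_0\cdot\bar h=3$ and $\bar F_i\cdot\bar h=1$, a one-line check gives $\phi(v)\cdot h^2=v\cdot\bar h$ for every $v\in H^2(S,\bZ)$. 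Hence $\phi^{-1}\big((h^2)^\perp_M\big)=\bar h^\perp$, and $\phi$ restricts to an isomorphism of groups $\bar h^\perp\xrightarrow{\ \sim\ }(h^2)^\perp_M$.

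The crux is to compare the two quadratic forms. Writing $v=\sum_i c_i\bar F_i$, $w=\sum_j d_j\bar F_j$ and expanding $\phi(v)\cdot\phi(w)=\sum_{i,j}c_id_j\,[F_i]\cdot[F_j]$ via the Gram matrix $G_N$ of \lemmaref{N gram}, I obtain
\[
\phi(v)\cdot\phi(w)=7c_0d_0+3c_0S_d+3d_0S_c+2P+S_cS_d,
\]
where $S_c=\sum_{i\ge1}c_i$, $S_d=\sum_{i\ge1}d_i$ and $P=\sum_{i\ge1}c_id_i$. Imposing $v,w\in\bar h^\perp$, i.e.\ $S_c=-3c_0$ and $S_d=-3d_0$, all terms collapse to $-2c_0d_0+2P=-2\,(v\cdot w)$. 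Thus $\phi$ multiplies the intersection form by $-2$ on primitive parts. Since $\bar h^\perp\cong E_6$ is negative definite, its $(-2)$-rescaling is the positive definite lattice $E_6(2)$, giving $(h^2)^\perp_M\cong E_6(2)$; evenness is then immediate, as $E_6(2)$ is even (indeed $4\bZ$-valued).

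I expect the only genuine subtleties to be bookkeeping: pinning down the sign in the rescaling (the surface form is negative definite while $M$ is positive definite, so the scaling factor is $-2$, not $+2$), and invoking the classical identification $\bar h^\perp\cong E_6$, which is the standard relation between the $27$ lines and the root system $E_6$ and may simply be cited. As a consistency check on the discriminant, $\mathrm{discr}(E_6(2))=3\cdot2^6=192$, and this agrees with the index computation $[M:\bZ h^2\oplus(h^2)^\perp_M]=3$ (since $h^2=\phi(\bar h)$ and $[F_i]-[F_j]\in(h^2)^\perp_M$ force the quotient to be cyclic of order $3$) combined with $\mathrm{discr}(M)=64$ from \lemmaref{N gram} through $3\cdot\mathrm{discr}\big((h^2)^\perp_M\big)=64\cdot3^2$.
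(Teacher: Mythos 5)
Your proof is correct: the expansion of $\phi(v)\cdot\phi(w)$ via the Gram matrix of \lemmaref{N gram}, the collapse to $-2c_0d_0+2P=-2(v\cdot w)$ under the primitivity conditions $S_c=-3c_0$, $S_d=-3d_0$, and the discriminant consistency check ($3\cdot 192 = 64\cdot 3^2$) all verify. Your route is genuinely different in mechanism from the paper's, though both exploit the same correspondence $[F_i]\leftrightarrow\bar F_i$ with the cubic surface $S=X\cap H$. The paper works inside $M$ directly: it solves the linear condition $3a_0+a_1+\dots+a_6=0$, exhibits an explicit $\bZ$-basis of $(h^2)^\perp_M$ (namely $[F_2]-[F_1],\dots,[F_6]-[F_5]$ and $-[F_0]+[F_1]+[F_2]+[F_3]$, modeled on the canonical root basis of \cite[Thm. 8.2.12]{Dolgachev_cag}), and matches its Gram matrix against that of $E_6(2)$; evenness is derived separately from the evenness of $(h^2)^\perp_{H^4(Y,\bZ)}$ \cite[Prop. 2.1.2]{Hassett_cubic}. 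You instead establish the uniform similitude identity $\phi(v)\cdot\phi(w)=-2(v\cdot w)$ on all of $\bar h^\perp$ and then import the classical isometry $\bar h^\perp\cong E_6(-1)$. Your version buys two things: it sidesteps the paper's (easy but unproved) claim that the listed vectors actually generate $(h^2)^\perp_M$, since the equality $\phi(v)\cdot h^2=v\cdot\bar h$ makes $\phi$ identify the full primitive parts at once; and it yields evenness as a corollary of the identification rather than as a separate input. The cost is the reliance on the classical fact that the primitive Picard lattice of a smooth cubic surface is $E_6(-1)$ -- but that is precisely the result the paper's basis is modeled on, so both proofs ultimately lean on the same classical citation, yours explicitly and the paper's implicitly.
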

\begin{proof} 
Because $(h^2)_{H^4(Y,\bZ)}^{\perp}$ is even (see for example \cite[Prop. 2.1.2]{Hassett_cubic}), the sublattice $(h^2)_M^{\perp}$ is even. We compute $M \cap H^4_{\mathrm{prim}}(Y, \bZ) = (h^2)_M^{\perp}$ explicitly. Let $x$ be an element of $M$ satisfying $x \cdot h^2 = 0$. Write $x = a_0[F_0] + a_1[F_1] + \dots + a_6[F_6]$ where $a_0, \dots, a_6$ are integers. Then $3a_0 + a_1+\dots+a_6 = 0$. It is not difficult to verify that $(h^2)_M^{\perp}$ is generated by $[F_i] - [F_j]$ and $[F_0]-[F_i]-[F_j]-[F_k]$ with $1 \leq i,j,k \leq 6$. Consider the $\bZ$-basis of $(h^2)_M^{\perp}$ consisting of $[F_2]-[F_1]$, $[F_3]-[F_2]$, $[F_4]-[F_3]$, $[F_5]-[F_4]$, $[F_6]-[F_5]$ and $-[F_0]+[F_1]+[F_2]+[F_3]$ (compare \cite[Thm. 8.2.12]{Dolgachev_cag}). The Gram matrix can be easily computed which coincides with the Gram matrix for $E_6(2)$. 
%$$
%\begin{pmatrix} 
%4 & -2 & 0 & 0 & 0 & 0 \\
%-2 & 4 & -2 & 0 & 0 & 0 \\
%0 & -2 & 4 & -2 & 0 & -2 \\
%0 & 0 & -2 & 4 & -2 & 0 \\
%0 & 0 & 0 & -2 & 4 & 0 \\
%0 & 0 & -2 & 0 & 0 & 4 \\
%\end{pmatrix}
%$$
\end{proof}

\begin{remark}
\propositionref{h2 perp} shows that $M$ is closely related to the lattice $E_6$. This is not surprising because $M$ contains the classes of the planes generated by the Eckardt point $p$ and the $27$ lines on the cubic surface $X \cap H$. The configuration of these planes is determined by that of the $27$ lines. The relation between lines on a cubic surface and the $E_6$ lattice is classically known, see for example \cite[\S9.1]{Dolgachev_cag}.
\end{remark} 

Next we study the transcendental lattice $T:= M_{H^4(Y,\bZ)}^{\perp}$ which is the orthogonal complement of $M$ in $H^4(Y,\bZ)$. By \cite[Prop. 2.1.2]{Hassett_cubic} the middle integral cohomology lattice $H^4(Y,\bZ)$ is the odd unimodular lattice $I_{21,2} = (+1)^{\oplus 21} \oplus (-1)^{\oplus 2}$. We determine $T$ using analogues of the results in \cite[\S1]{Nikulin_lattice} for odd lattices. 
\begin{lemma} \label{T invariants}
The invariants of the transcendental lattice $T$ are computed as follows.
\begin{itemize}
\item $T$ is an even lattice of rank $16$. The signature is $(14,2)$.
\item $A_T \cong (\bZ/2\bZ)^6$. In particular, $T$ is $2$-elementary and $\mathrm{discr}(M) = 64$.
\item The discriminant quadratic form $q_T: A_T \rightarrow \bQ/2\bZ$ is isomorphic to the orthogonal direct sum $v \oplus v \oplus v$ where $v: (\bZ/2\bZ)^{\oplus 2} \rightarrow \bQ/2\bZ$ is the discriminant quadratic form of the $D_4$ lattice.
\end{itemize}
\end{lemma}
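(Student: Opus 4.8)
The plan is to combine the standard relation between a primitive sublattice and its orthogonal complement inside a unimodular lattice with a signature computation that pins down the discriminant \emph{quadratic} form. I begin with the cheap invariants. Since $M$ is positive definite of rank $7$ and is primitively embedded in $H^4(Y,\bZ) = I_{21,2}$ (\propositionref{primitive embedding}), the complement $T = M^\perp_{H^4(Y,\bZ)}$ has rank $23 - 7 = 16$ and signature $(21-7,\,2) = (14,2)$. For evenness, observe that $M$ contains $h^2$, so $T \subseteq (h^2)^\perp_{H^4(Y,\bZ)}$; the latter is even by \cite[Prop. 2.1.2]{Hassett_cubic}, hence so is $T$.

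Next I would transport the discriminant data across the complement, using the analogue of \cite[Prop. 1.6.1]{Nikulin_lattice} for odd unimodular lattices. As $H^4(Y,\bZ)$ is unimodular and $M$ is primitive, the pairing gives $H^4(Y,\bZ)/T \cong M^*$, whence $H^4(Y,\bZ)/(M\oplus T)\cong M^*/M = A_M$, and symmetrically $\cong A_T$; the overlattice is the graph of an isomorphism $\gamma\colon A_M \xrightarrow{\sim} A_T$. Thus $A_T \cong A_M \cong (\bZ/2\bZ)^6$ by \lemmaref{disc N}, so $T$ is $2$-elementary with $\mathrm{discr}(T) = 64$. Integrality of $H^4(Y,\bZ)$ forces $b_T\circ(\gamma\times\gamma) = -b_M$, and since $A_T$ is $2$-torsion this equals $b_M$; in particular $b_T$ has vanishing diagonal. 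The essential difficulty appears precisely here: because $H^4(Y,\bZ)$ is \emph{odd} and $M$ is odd, the self-pairings $\langle m^*+t^*,\,m^*+t^*\rangle$ lie only in $\bZ$ rather than $2\bZ$, so the complement relation determines $b_T$ but \emph{not} the quadratic refinement $q_T$.

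Hence the crux is to compute $q_T$ by an independent route, namely the signature. Since $b_T(x,x)\equiv q_T(x)\pmod\bZ$ and $b_T$ has vanishing diagonal, $q_T$ is integer-valued (of even type, $\delta_T = 0$); by the classification in \cite[\S1]{Nikulin_lattice} it is then an orthogonal sum of the elementary forms $u$ (the discriminant form of $U(2)$, with $q\equiv 0$ on a basis) and $v$ on $(\bZ/2\bZ)^{\oplus 2}$, these being the only integer-valued pieces on a $2$-elementary group; so $q_T \cong u^{\oplus(3-a)}\oplus v^{\oplus a}$ for some $a$. To fix $a$ I would apply Milgram's formula $\mathrm{sign}(T)\equiv\mathrm{sign}(q_T)\pmod 8$: here $\mathrm{sign}(T)=12$, while $\mathrm{sign}(u)\equiv 0$ and $\mathrm{sign}(v)\equiv 4\pmod 8$, forcing $4a\equiv 4\pmod 8$, i.e. $a$ odd. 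Invoking the relation $u^{\oplus 2}\cong v^{\oplus 2}$ collapses the remaining ambiguity to a single class, giving $q_T\cong v\oplus v\oplus v$; recognizing $v$ as the discriminant form of $D_4$ (all three nonzero classes have $q=1$) yields the stated form. I expect this Milgram/signature step to be the real content, since it is what substitutes for the quadratic-form half of Nikulin's complement theorem, which is unavailable in the odd setting.
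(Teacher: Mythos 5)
Your proposal is correct, and its skeleton matches the paper's proof: evenness from $T \subseteq (h^2)^{\perp}_{H^4(Y,\bZ)}$ together with \cite[Prop. 2.1.2]{Hassett_cubic}, the signature count, and the identification $(A_T, b_T) \cong (A_M, -b_M)$ via the odd-lattice version of \cite[Prop. 1.6.1]{Nikulin_lattice}; you also correctly isolate the genuine subtlety, namely that for an odd ambient lattice the complement relation controls only the bilinear form, not its quadratic refinement. Where you diverge is the final step. The paper invokes \cite[Thm. 1.11.3]{Nikulin_lattice} (a finite quadratic form is determined by its bilinear form and its signature mod $8$): it writes down an explicit quadratic form $q$ on $A_M \cong A_T$ whose bilinear form is $b_T$, counts its values ($28$ zeros, $36$ ones), identifies $q \cong v \oplus v \oplus v$ via the Arf invariant, checks the signature is $4 \pmod 8$, and concludes $q_T = q$. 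You instead constrain $q_T$ itself: integer-valuedness follows from the vanishing diagonal of $b_T$ (since $q_T(x) \equiv b_T(x,x) \pmod{\bZ}$, and all off-diagonal entries of $b_M$ lie in $\tfrac12\bZ$ so the cross terms vanish mod $\bZ$), Nikulin's classification of forms on $2$-elementary groups then gives $q_T \cong u^{\oplus(3-a)} \oplus v^{\oplus a}$, and Milgram's formula \cite[Thm. 1.3.3]{Nikulin_lattice} forces $4a \equiv 4 \pmod 8$, whence $q_T \cong v^{\oplus 3}$ by the relation $u^{\oplus 2} \cong v^{\oplus 2}$. Your route is leaner, bypassing Theorem 1.11.3 and the auxiliary form altogether (note the paper needs Milgram anyway, to know $\mathrm{sign}(q_T) \equiv \mathrm{sign}(T) \pmod 8$ before its uniqueness argument closes). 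What the paper's explicit computation buys is the value distribution of $q_T$ ($28$ zeros and $36$ nonzeros), which is reused later, e.g.\ in \lemmaref{qTe6} to identify $(A_T, q_T)$ with $(E_6/2E_6, q)$ and in \lemmaref{typeb} to enumerate the $36$ classes of Type (2) vectors; under your argument that count would be recovered afterwards from $q_T \cong v^{\oplus 3}$.
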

\begin{proof}
The lattice $T$ is contained in $(h^2)_{H^4(Y,\bZ)}^{\perp}$ and hence must be even. Clearly the signature is $(14,2)$. By \cite[Prop. 1.6.1]{Nikulin_lattice} we have $(A_T, b_T) \cong (A_M, -b_M)$ (as pointed out on Page 110 of op. cit. the results in Sections 1.4-1.6 hold for odd lattices after replacing discriminant quadratic forms by discriminant bilinear forms). By op. cit. Theorem 1.11.3 the signature $14-2 \pmod 8$ and $b_T$ determine $q_T$. Let $q$ be the finite quadratic form on $A_T \cong A_M$ given by the matrix in \lemmaref{disc N} (with respect to the basis $\{[F_1]^*, \dots, [F_6]^*\}$). It is straightforward to check that $b_T$ is the bilinear form of $q$ (cf. op. cit. Section 1.2). Also, $q$ equals $0$ for $28$ elements and equals $1$ for $36$ elements. Denote the quadratic form $u^{(2)}_+(2)$ (resp. $v^{(2)}_+(2)$) in  op. cit. Section 1.8 by $u$ (resp. $v$). Note that $v$ is isomorphic to the discriminant quadratic form of $D_4$. Since $q$ only takes values in integers, the only possibilities are $q \cong u \oplus u \oplus u \cong u \oplus v \oplus v$ or $q \cong v \oplus v \oplus v \cong u \oplus u \oplus v$ (cf. op. cit. Propositions 1.8.1 and 1.8.2). Because both $q$ and $v \oplus v \oplus v$ have Arf-invariant equal to $1$, we deduce that $q \cong v \oplus v \oplus v$. It follows that $q$ has signature $4 \pmod 8$ (see also op. cit. Theorem 1.3.3) and hence $q_T=q$.
\end{proof}

\begin{proposition} \label{adeT}
The lattice $T = M_{H^4(Y,\bZ)}^{\perp}$ is isomorphic to the orthogonal direct sum $U^{\oplus 2} \oplus D_4^{\oplus 3} $ %(which is also isomorphic to $U \oplus U(2) \oplus D_{4} \oplus D_8 $ or $U(2)^{\oplus 2} \oplus D_{12}$) 
and the natural map $O(T) \rightarrow O(q_T)$ is surjective. 
\end{proposition}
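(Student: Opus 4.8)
The plan is to pin down $T$ purely from its genus invariants, which have already been assembled in \lemmaref{T invariants}, using Nikulin's theory of discriminant forms. First I would compute the genus invariants of the candidate lattice $L := U^{\oplus 2} \oplus D_4^{\oplus 3}$ and check that they agree with those of $T$. The lattice $L$ is even (being an orthogonal sum of the even lattices $U$ and $D_4$) and has rank $16$. Since $U$ has signature $(1,1)$ and $D_4$ is positive definite of rank $4$, the signature of $L$ is $(2+12,\,2)=(14,2)$, matching $T$. Because $U$ is unimodular, the discriminant form of $L$ is the orthogonal sum of the discriminant forms of the three copies of $D_4$, that is $v \oplus v \oplus v$, which is exactly $q_T$ by \lemmaref{T invariants}. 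Thus $T$ and $L$ are even lattices sharing the same signature and discriminant quadratic form.

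Next I would invoke a uniqueness criterion to upgrade this equality of invariants to an isometry. Both lattices are indefinite, since $p_+ = 14 \geq 1$ and $p_- = 2 \geq 1$, and we have $\rank(T) = 16 \geq \ell(A_T) + 2 = 6 + 2 = 8$, where $\ell(A_T) = 6$ is the minimal number of generators of $A_T \cong (\bZ/2\bZ)^6$. These are precisely the hypotheses of Nikulin's uniqueness theorem \cite[Cor. 1.13.3]{Nikulin_lattice}, which asserts that an even indefinite lattice with $\rank \geq \ell(A) + 2$ is determined up to isometry by its signature and discriminant form. Applying it yields $T \cong U^{\oplus 2} \oplus D_4^{\oplus 3}$.

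Finally, for the surjectivity of $O(T) \to O(q_T)$ I would appeal to \cite[Thm. 1.14.2]{Nikulin_lattice}, whose hypotheses ($p_+ \geq 1$, $p_- \geq 1$, and $\rank \geq \ell(A_T) + 2$) are exactly the numerical conditions already verified above. This theorem guarantees that every isometry of the discriminant form $q_T$ lifts to an isometry of $T$, completing the proof.

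Since each step reduces to verifying the numerical hypotheses of a standard structural theorem, there is no genuine obstacle beyond careful bookkeeping of invariants; the only point meriting a second look is that $v^{\oplus 3}$ really is the discriminant form of $D_4^{\oplus 3}$, which is immediate from the additivity of discriminant forms under orthogonal direct sums together with the fact, noted in \lemmaref{T invariants}, that $v$ is the discriminant quadratic form of $D_4$.
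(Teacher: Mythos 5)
Your proof is correct and follows essentially the same route as the paper, which likewise deduces both claims from \lemmaref{T invariants} together with Nikulin's uniqueness theorem for indefinite even lattices and his lifting theorem for isometries of the discriminant form (the paper cites \cite[Thm.~1.13.2, Thm.~1.14.2]{Nikulin_lattice}, while you cite the adjacent Cor.~1.13.3 for uniqueness, a negligible difference). You simply spell out the verification of the numerical hypotheses and the computation of the invariants of $U^{\oplus 2}\oplus D_4^{\oplus 3}$, which the paper leaves implicit.
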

\begin{proof}
This follows from \lemmaref{T invariants}, \cite{Nikulin_lattice} Theorem 1.13.2 and Theorem 1.14.2 (see also Theorem 3.6.2).
\end{proof}

\begin{lemma} \label{qTe6}
The automorphism group $O(q_T)$ is isomorphic to the Weyl group $W(E_6)$ of the $E_6$ lattice.
\end{lemma}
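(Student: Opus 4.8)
The plan is to recognize $O(q_T)$ as the orthogonal group of a quadratic form over the field $\mathbb{F}_2$ and then appeal to the classical isomorphism between that group and $W(E_6)$.

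First I would note that, by \lemmaref{T invariants}, the discriminant form $q_T \cong v\oplus v\oplus v$ takes values only in $\{0,1\}\subset \bQ/2\bZ$ on the $2$-elementary group $A_T\cong(\bZ/2\bZ)^6$. Hence $q_T$ is nothing but a nondegenerate quadratic form $\bar q\colon \mathbb{F}_2^6\to\mathbb{F}_2$ on the $6$-dimensional $\mathbb{F}_2$-vector space $A_T$, and $O(q_T)$ coincides with the orthogonal group $O(\bar q)$. A nondegenerate quadratic form on $\mathbb{F}_2^{6}$ is either of plus or of minus type, the two being distinguished by the number of isotropic vectors: a plus-type form has $2^5+2^2=36$ zeros, a minus-type form has $2^5-2^2=28$ zeros (equivalently, by the Arf invariant). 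The computation already recorded in the proof of \lemmaref{T invariants} shows that $q_T$ vanishes on exactly $28$ elements and has Arf invariant $1$; therefore $\bar q$ is of minus type and $O(q_T)\cong O_6^-(\mathbb{F}_2)$.

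It then remains to invoke the classical isomorphism $O_6^-(\mathbb{F}_2)\cong W(E_6)$. I would prove it (or cite it) as follows: since $E_6$ is even with odd determinant $3$, the rule $x\mapsto \tfrac12\langle x,x\rangle \bmod 2$ descends to a nondegenerate quadratic form on $E_6/2E_6\cong\mathbb{F}_2^6$, which is again of minus type, and $W(E_6)$ acts on it by reduction modulo $2$. This action is faithful: its kernel is a normal subgroup of $W(E_6)$, whose only options are $1$, the index-$2$ simple subgroup $\mathrm{PSU}_4(2)$, and $W(E_6)$ itself; it is neither of the latter two since the simple reflections already act nontrivially and generate a nonabelian image, so the kernel is trivial. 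This yields an injection $W(E_6)\hookrightarrow O_6^-(\mathbb{F}_2)$, and comparing orders $|W(E_6)|=51840=|O_6^-(\mathbb{F}_2)|$ forces it to be an isomorphism. This route also explains the statement conceptually: by \propositionref{h2 perp} the lattice $M$ contains $E_6(2)=(h^2)_M^{\perp}$, the $2$-torsion of the discriminant of $E_6(2)$ is precisely $(E_6/2E_6,\bar q)$, and tracing this through $T=M_{H^4(Y,\bZ)}^{\perp}$ identifies $q_T$ with $\bar q$, so that $O(q_T)\cong O(E_6/2E_6,\bar q)\cong W(E_6)$.

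The routine lattice bookkeeping—reducing $q_T$ to an $\mathbb{F}_2$-form and counting its zeros—is immediate from \lemmaref{T invariants}, so the only genuinely nontrivial ingredient is the group-theoretic identification $O_6^-(\mathbb{F}_2)\cong W(E_6)$. I expect the main obstacle to be pinning this isomorphism down cleanly, in particular verifying faithfulness of the mod-$2$ representation of $W(E_6)$; but since $-1\notin W(E_6)$ (so the center is trivial) and $W(E_6)$ is almost simple, the normal-subgroup analysis together with the order count makes the argument airtight, and in any case the isomorphism is classical and may simply be cited.
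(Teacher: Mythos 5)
Your proof is correct and follows essentially the same route as the paper: both arguments identify $q_T$ with the mod-$2$ quadratic form $q(-)\equiv\frac12(-,-)_{E_6}$ on $E_6/2E_6\cong(\mathbb{Z}/2\mathbb{Z})^6$ by the Arf invariant / count of $28$ zeros, and both rest on the classical identification $W(E_6)\cong O(E_6/2E_6,q)$, i.e. $W(E_6)\cong O_6^-(\mathbb{F}_2)$. The only difference is that the paper cites this isomorphism (Bourbaki, Exercise VI.4.2, also Dolgachev \S 9.1.1), whereas you make it self-contained by proving faithfulness of the mod-$2$ representation through the normal-subgroup structure of $W(E_6)$ and comparing orders $|W(E_6)|=51840=|O_6^-(\mathbb{F}_2)|$.
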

\begin{proof}
By \cite{Bourbaki_lie4-6} Exercise VI.4.2 (see also \cite[\S 9.1.1]{Dolgachev_cag}) the action of $W(E_6)$ on $E_6/2E_6 \cong (\bZ/2\bZ)^6$ (equipped with the quadratic form $q(-) \equiv \frac12(-,-)_{E_6} \pmod{2\bZ}$) induces an isomorphism $W(E_6) \cong O(E_6/2E_6, q)$. The quadratic form $q$ has Arf-invariant $1$ and hence vanishes on $28$ vectors (cf. \cite[\S 9.1]{Dolgachev_cag}). As a result we get $q \cong q_T$ (see also the proof of \lemmaref{T invariants}). 
\end{proof}

Let us identify $O(b_M)$ and $O(b_T)$ using $(A_M, b_M) \cong (A_T, -b_T)$ and view $O(q_T) \subset O(b_T)$ as a subgroup of $O(b_M)$. The following lemma allows us to extend an automorphism of $T$ to an automorphism of $H^4(Y,\bZ)$ fixing the square of the hyperplane class $h^2$. %(see \cite[Prop. 3.3]{Dolgachev_latticek3} for the analogue for lattice polarized $K3$ surfaces)
\begin{lemma} \label{Me6}
The subgroup $O(q_T)  \subset O(b_T) \cong O(b_M)$ is the image of the stabilizer subgroup $O(M)_{h^2} \subset O(M)$ of $h^2$ under the natural map $O(M) \rightarrow O(b_M)$. Moreover, we have $O(M)_{h^2} \cong W(E_6)$. 
\end{lemma}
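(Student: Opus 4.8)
The plan is to compute $O(M)_{h^2}$ by restricting isometries to the primitive sublattice $(h^2)_M^{\perp}\cong E_6(2)$ (\propositionref{h2 perp}) and tracking how the gluing that produces $M$ from $\langle h^2\rangle\oplus E_6(2)$ constrains which isometries extend, and then to read off the induced action on $A_M$ and match it with $O(q_T)$ through \lemmaref{qTe6}. First I would set up the restriction map. Since $M\otimes\bQ=\bQ h^2\perp\bigl((h^2)_M^{\perp}\otimes\bQ\bigr)$, any $\phi\in O(M)$ fixing $h^2$ preserves $(h^2)_M^{\perp}$ and is determined by $\psi:=\phi|_{(h^2)_M^{\perp}}\in O(E_6(2))=O(E_6)$; this gives an injection $O(M)_{h^2}\hookrightarrow O(E_6)$. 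Comparing discriminants, $\mathrm{discr}(M)=64$ from \lemmaref{N gram} against $\mathrm{discr}(\langle h^2\rangle\oplus E_6(2))=3\cdot 2^6\cdot 3$, forces $[M:\langle h^2\rangle\oplus E_6(2)]=3$, so $M$ is the overlattice attached to an isotropic cyclic subgroup $\Gamma\cong\bZ/3\bZ$ glued along the $3$-primary parts of the two discriminant groups (the $3$-part of $A_{E_6(2)}$ being $E_6^*/E_6$).

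By Nikulin's description of overlattices \cite[\S1.4]{Nikulin_lattice}, $\mathrm{id}_{h^2}\oplus\psi$ extends to an isometry of $M$ if and only if it preserves $\Gamma$; as $\Gamma$ is the graph of an anti-isometry onto the $3$-part of $A_{E_6(2)}$, and the $h^2$-factor is fixed, this holds exactly when $\psi$ acts trivially on that $3$-part. I would then identify this subgroup of $O(E_6)$: reflections generating $W(E_6)$ act trivially on $A_{E_6}=E_6^*/E_6\cong\bZ/3\bZ$ (a reflection in a root $\alpha\in E_6$ alters any $v\in E_6^*$ by a multiple of $\alpha\in E_6$), whereas $O(E_6)/W(E_6)\cong\{\pm\mathrm{id}\}$ and $-\mathrm{id}$ acts by $-1$. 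Hence the kernel of the action on the $3$-part is precisely $W(E_6)$, and consequently $O(M)_{h^2}\cong W(E_6)$, which is the second assertion.

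For the first assertion I would compute the image in $O(b_M)$. Identifying $A_M$ with the $2$-primary part of $A_{E_6(2)}$, which is untouched by the gluing along $\Gamma$, the map $E_6/2E_6\xrightarrow{\ \sim\ }\tfrac12 E_6/E_6$, $x\mapsto\tfrac12 x$, is $W(E_6)$-equivariant and carries the induced form to $q(x)=\tfrac12(x,x)_{E_6}\bmod 2\bZ$. Thus the action of $O(M)_{h^2}=W(E_6)$ on $A_M$ coincides with the action of $W(E_6)$ on $(E_6/2E_6,q)$, which by \lemmaref{qTe6} realizes the isomorphism $W(E_6)\cong O(q_T)$. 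Therefore the image of $O(M)_{h^2}$ under the natural map $O(M)\to O(b_M)$ is exactly $O(q_T)\subset O(b_T)\cong O(b_M)$, as claimed.

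I expect the main obstacle to be the bookkeeping in the gluing step: pinning down $M$ as the specific index-$3$ overlattice of $\langle h^2\rangle\oplus E_6(2)$, checking that $\Gamma$ lives in the $3$-primary parts, and verifying that the $2$-primary part of $A_{E_6(2)}$ survives as $A_M$ with the correct discriminant form. Once these identifications are secured, the equivariance and the appeal to \lemmaref{qTe6} are routine.
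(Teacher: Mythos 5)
Your proof of the second assertion, $O(M)_{h^2}\cong W(E_6)$, is correct and takes a genuinely different route from the paper. The paper realizes $W(E_6)$ inside $O(M)_{h^2}$ geometrically, via its action on $\Pic(X\cap H)$ and the classes $[F_0],\dots,[F_6]$, and proves that this copy exhausts $O(M)_{h^2}$ using Dolgachev's uniqueness theorem for canonical bases of $E_6$. You instead restrict an isometry to $(h^2)_M^{\perp}\cong E_6(2)$ (\propositionref{h2 perp}) and decide which $\psi\in O(E_6)$ extend back to $M$ by Nikulin's overlattice criterion: the discriminant count $3\cdot 192 = 9\cdot 64$ gives $[M:\langle h^2\rangle\oplus E_6(2)]=3$ with gluing along the $3$-primary parts, so $\mathrm{id}\oplus\psi$ extends if and only if $\psi$ fixes $E_6^*/E_6\cong\bZ/3\bZ$ pointwise; since $O(E_6)=W(E_6)\times\{\pm\mathrm{id}\}$ and $-\mathrm{id}$ acts by $-1$, that subgroup is exactly $W(E_6)$. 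This is shorter and more conceptual; what the paper's construction buys in exchange is explicit generators $s_{\beta_i}$ and their matrices on $A_M$, which it reuses both to finish the lemma and to give $W(E_6)$ its monodromy interpretation in \propositionref{Te6}.

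Your proof of the first assertion, however, has a genuine gap in its final step. What your argument establishes is that the image of $O(M)_{h^2}$ in $O(b_M)$ is the full orthogonal group $O(q')$ of one particular quadratic refinement $q'$ of $b_M$, namely the form transported from $\tfrac12(x,x)_{E_6}\bmod 2\bZ$ through your identification of $A_M$ with the $2$-part of $A_{E_6(2)}$, together with the fact that $(A_M,q')$ is abstractly isomorphic to $(A_T,q_T)$ (\lemmaref{qTe6}). But the statement asserts equality with the specific subgroup $O(q_T)\subset O(b_T)$, where $O(b_M)$ and $O(b_T)$ are identified through the gluing isomorphism $(A_M,b_M)\cong(A_T,-b_T)$ coming from $M,T\subset\Lambda$. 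Abstract isomorphism of forms is strictly weaker: $b_T$ admits exactly $28$ distinct quadratic refinements with the same Arf invariant as $q_T$ (they are $q_T+2b_T(\cdot,v)$ for the $28$ classes $v$ with $q_T(v)=0$), pairwise distinct though all conjugate under $O(b_T)$. So as written your argument only shows that the image is an $O(b_T)$-conjugate of $O(q_T)$. The distinction is not cosmetic, because the lemma is later used (together with \propositionref{adeT} and \cite[Cor. 1.5.2]{Nikulin_lattice}, e.g. in \propositionref{Te6} and in the construction of the period map) precisely to match isometries of $M$ with isometries of $T$ across this particular gluing.

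The missing step can be supplied inside your framework. Pass to the even lattice $\Lambda_0=(h^2)_{\Lambda}^{\perp}$, in which $E_6(2)=(h^2)_M^{\perp}$ and $T$ are primitive orthogonal complements of one another. Comparing discriminants ($192\cdot 64/3=64^2$) gives $[\Lambda_0:E_6(2)\oplus T]=64$, so the glue group $\Lambda_0/(E_6(2)\oplus T)$ projects isomorphically onto the $2$-part of $A_{E_6(2)}$ and onto $A_T$; because $\Lambda_0$ is even, this glue group is isotropic for $q_{E_6(2)}\oplus q_T$, hence the induced isomorphism from the $2$-part of $A_{E_6(2)}$ onto $A_T$ carries your $q'$ to $-q_T=q_T$ (the forms take integer values). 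Finally, this isomorphism agrees with the identification used in the statement, since glue partners in $\Lambda\supset\Lambda_0$ are unique modulo $T$. With that compatibility in place, your conclusion that the image equals $O(q_T)$ follows. (The paper sidesteps the issue differently: \lemmaref{T invariants} pins down $q_T$ as an explicit form on the basis $\{[F_1]^*,\dots,[F_6]^*\}$ of $A_T\cong A_M$, and the proof of the lemma verifies by direct computation that the generators $s_{\beta_i}^*$ preserve that explicit form.)
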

\begin{proof}
The idea is to consider the action of $W(E_6)$ on the $27$ lines of the cubic surface $X \cap H$ (and hence on the corresponding planes in $Y$ which generate the lattice $M$). Let us first set up some notations. The smooth cubic surface $X \cap H$ is isomorphic to $\bP^2$ with $6$ points blown up. Let $\bar{F}_0$ be the pull-back of a line on $\bP^2$. Denote the exceptional curves by $\bar{F}_1, \dots, \bar{F}_6$. (We shall use the same notations $\bar{F}_0, \bar{F}_1, \dots, \bar{F}_6$ to denote the corresponding curve classes.) The classes $\bar{F}_0, \bar{F}_1, \dots, \bar{F}_6$ form an orthonormal basis of $\Pic(X \cap H)$ which is isomorphic to $I_{1,6} = (+1) \oplus (-1)^{\oplus 6}$. By \cite[Thm. 8.2.12]{Dolgachev_cag} the vectors $\beta_1=-\bar{F}_0+\bar{F}_1+\bar{F}_2+\bar{F}_3$ and $\beta_j = \bar{F}_j - \bar{F}_{j-1}$ for $2 \leq j \leq 6$ form a canonical basis (op. cit. Definition 8.2.11) of the lattice $E_6(-1) = (3\bar{F}_0-\bar{F}_1-\dots-\bar{F}_6)_{\Pic(X \cap H)}^{\perp}$. The Weyl group $W(E_6)$ is generated by the reflections $r_{\beta_i}$ in the $(-2)$-vectors $\beta_i$. Let $p=[0,0,0,0,0,1]$ be the Eckardt point of $Y$. Set $[F_i]$ to be the surface class in $Y$ represented by the cone over $\bar{F}_i$ with vertex $p$ ($0 \leq i \leq 6$). Note that $\{[F_0], [F_1], \dots, [F_6]\}$ forms a basis of $M$. Every element $\bar{C}$ of $\Pic(X \cap H)$ is a linear combination of $\bar{F}_0, \bar{F}_1, \dots, \bar{F}_6$. We use $[C]$ to denote the linear combination of $[F_0], [F_1], \dots, [F_6]$ in $M$ with the same coefficients. In particular, the notation $[\beta_i]$ makes sense. 

We define a homomorphism $W(E_6) \rightarrow O(M)$ by $$r_{\beta_i} \mapsto s_{\beta_i}: [C] \mapsto [C]+ (\bar{C} \cdot \beta_i)[\beta_i].$$ (For example, because $\beta_1 \cdot \bar{F}_0 = -1$ on the cubic surface $X \cap H$ we have $s_{\beta_1}([F_0]) = [F_0] - [\beta_1] = 2[F_0]-[F_1]-[F_2]-[F_3]$.) With respect to the basis $\{[F_0], [F_1], \dots, [F_6]\}$ the transform $s_{\beta_1}$ is given by 
$$
\begin{pmatrix} 
2 & 1 & 1 & 1 & 0 & 0 & 0 \\
-1 & 0 & -1 & -1 & 0 & 0 & 0 \\
-1 & -1 & 0 & -1 & 0 & 0 & 0 \\
-1 & -1 & -1 & 0 & 0 & 0 & 0 \\
0 & 0 & 0 & 0 & 1 & 0 & 0 \\
0 & 0 & 0 & 0 & 0 & 1 & 0 \\
0 & 0 & 0 & 0 & 0 & 0 & 1 \\
\end{pmatrix}
$$
and $s_{\beta_j}$ is the transposition between $[F_{j-1}]$ and $[F_j]$ for $2 \leq j \leq 6$. It is straightforward to verify that $s_{\beta_i} \in O(M)$. It is also clear that the homomorphism $W(E_6) \rightarrow O(M)$ is injective. 

The reflections $s_{\beta_i}$ ($1 \leq i \leq 6$) fix $h^2 = 3[F_0]-[F_1]- \dots -[F_6]$. In other words, the image of $W(E_6)$ is contained in the stabilizer subgroup $O(M)_{h^2} \subset O(M)$ of $h^2$. We claim the image is $O(M)_{h^2}$. Let $g$ be an element of $O(M)_{h^2}$. Fix the basis $\{[F_0], [F_1], \dots, [F_6]\}$ for $M$ and the basis $\{\bar{F}_0, \bar{F}_1, \dots, \bar{F}_6\}$ for $\Pic(X \cap H)$. Let $\bar{g}: \Pic(X \cap H) \rightarrow \Pic(X \cap H)$ be the automorphism which has the same matrix as $g$ (with respect to the bases we choose). It suffices to show that $\bar{g}$ belongs to $W(E_6)$. The isometry $g$ fixes $h^2$. As a result, $\bar{g}$ fixes the anticanonical class $3\bar{F}_0-\bar{F}_1-\dots-\bar{F}_6$ of $X \cap H$. Recall that $\{\beta_1, \dots, \beta_6\}$ is a canonical basis in $E_6(-1)$. Because $(h^2)_{M}^{\perp} \cong E_6(2)$ (cf. \propositionref{h2 perp}) and $(3\bar{F}_0-\bar{F}_1-\dots-\bar{F}_6)_{\Pic(X \cap H)}^{\perp} \cong E_6(-1)$, $\{\bar{g}(\beta_1), \dots, \bar{g}(\beta_6)\}$ is also a canonical basis. By \cite{Dolgachev_cag} Theorem 8.2.12 and Corollary 8.2.15 there exists a unique $\bar{g}' \in W(E_6)$ so that $\bar{g}'$ fixes $3\bar{F}_0-\bar{F}_1-\dots-\bar{F}_6$ and $\bar{g}' = \bar{g}$ when restricting to the orthogonal complement $(3\bar{F}_0-\bar{F}_1-\dots-\bar{F}_6)_{\Pic(X \cap H)}^{\perp}$. It follows that $\bar{g} = \bar{g}'$ for a sublattice of finite index. Since $\Pic(X \cap H)$ is torsion free we get $\bar{g} = \bar{g}'$.   

The automorphisms $s_{\beta_i}$ induce $s_{\beta_i}^* \in O(b_M) \cong O(b_T)$. Specifically, $s_{\beta_1}^*$ corresponds to the matrix (with respect to the basis $\{[F_1]^*, \dots, [F_6]^*\}$) 
$$
\begin{pmatrix} 
1 & 0 & 0 & 0 & 0 & 0  \\
0 & 1 & 0 & 0 & 0 & 0  \\
0 & 0 & 1 & 0 & 0 & 0  \\
1 & 1 & 1 & 1 & 0 & 0  \\
1 & 1 & 1 & 0 & 1 & 0  \\ 
1 & 1 & 1 & 0 & 0 & 1 \\
\end{pmatrix}
$$
and $s_{\beta_j}^*$ ($2 \leq j \leq 6$) swaps $[F_{j-1}]^*$ with $[F_j]^*$. The quadratic form $q_T$ has been computed in the proof of \lemmaref{T invariants}. One easily verifies that $s_{\beta_i}^* \in O(q_T)$.  

Now it suffices to show that the homomorphism $W(E_6) \hookrightarrow O(M) \rightarrow O(q_T) \cong W(E_6)$ is an isomorphism. Note that $W(E_6)$ has a unique proper normal subgroup which has index $2$. It is then not difficult to prove that the kernel is trivial. Thus the homomorphism is an isomorphism. Alternatively, we claim that the composition $W(E_6) \hookrightarrow O(M) \rightarrow O(q_T) (\cong W(E_6))$ coincides with the isomorphism $W(E_6) \rightarrow O(E_6/2E_6, q(-) \equiv \frac12(-,-)_{E_6})$ (see \lemmaref{qTe6}). To prove this we use the following basis of $A_T$. 
$$
([F_1]^*, [F_2]^*, [F_3]^*, [F_4]^*, [F_5]^*, [F_6]^*)
\begin{pmatrix} 
0 & 1 & 0 & 0 & 0 & 0  \\
0 & -1 & 1 & 0 & 0 & 0  \\
0 & 0 & -1 & 1 & 0 & 0  \\
1 & 0 & 0 & -1 & -1 & 0  \\
-1 & 0 & 0 & 0 & 1 & 1  \\
1 & 0 & 0 & 0 & 0 & -1 \\
\end{pmatrix}
$$
The matrix of $s_{\beta_i}^*$ ($1 \leq i \leq 6$) under this basis coincides with the matrix corresponding to the action of $r_{\beta_i}$ on $E_6/2E_6$.  
\end{proof}

Another property of $T(\cong U^{\oplus 2}\oplus D_4^{\oplus 3})$ we want to mention is that it is the Milnor lattice of the singularity $\Sigma O_{16}$ (suspension of the $O_{16}$ singularity, which is the affine cone over a cubic surface; explicitly $\Sigma O_{16}$ is $V(f(x_1,\dots,x_4)+x_5^2)\subset (\bC^5,0)$, for a non-singular homogeneous cubic polynomial $f$). Using this fact and some results of Ebeling \cite{Ebeling_monodromy}, we get a good handle on the orthogonal group $O(T)$. To state the result, we need to recall the following standard notations:
\begin{notation} Denote by $O^+(T) \subset O(T)$ (resp. $\widetilde{O}(T) \subset O(T)$) the subgroup of isometries of $T$ with spinor norm $1$ (resp. the subgroup of isometries of $T$ that induce the identity on $A_T$). Also let $O^*(T) = O^+(T) \cap \widetilde{O}(T)$.
\end{notation} 
\begin{proposition} \label{Te6}
The lattice $T$ is isometric to the Milnor lattice associated to the singularity $\Sigma O_{16}$ and satisfies $O^*(T) =W(T)$ (the Weyl group, i.e. the group generated by reflections in $-2$ vectors in $T$). Geometrically, $O^*(T)$ is the local monodromy group of $\Sigma O_{16}$, $O^+(T)$ is the monodromy group for the pairs $(X, H)$ (consisting of a smooth cubic threefold $X$ and a hyperplane $H$ intersecting $X$ transversely), and $W(E_6) \cong O^+(T )/O^*(T)$ represents the monodromy at infinity (which acts by permuting the $27$ lines on the cubic surfaces $X \cap H$). 
\end{proposition}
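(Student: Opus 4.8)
The plan is to combine the identification $T\cong U^{\oplus 2}\oplus D_4^{\oplus 3}$ of \propositionref{adeT} with Ebeling's analysis \cite{Ebeling_monodromy} of the monodromy of isolated hypersurface singularities. First I would match $T$ with the Milnor lattice of $\Sigma O_{16}$. The singularity $O_{16}$ is the affine cone over a smooth cubic surface, a three-dimensional singularity whose Milnor fibre carries a skew intersection form; passing to the suspension $\Sigma O_{16}=V(f(x_1,\dots,x_4)+x_5^2)$ makes the Milnor fibre four-dimensional, so that $H^4$ now carries a symmetric form, while by Thom--Sebastiani the Milnor number is unchanged. Since the Milnor number of $O_{16}$ is $16$ (matching $\rank T$), the resulting Milnor lattice is even of rank $16$; one then checks that its signature is $(14,2)$ and its discriminant form is the threefold orthogonal sum of the $D_4$-form, exactly the invariants recorded in \lemmaref{T invariants}. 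By the uniqueness of indefinite even lattices with prescribed signature and discriminant form (\cite[Thm. 1.13.2, Thm. 1.14.2]{Nikulin_lattice}), this Milnor lattice is isometric to $T$.

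Next I would establish $O^*(T)=W(T)$, beginning with the routine inclusion $W(T)\subseteq O^*(T)$. A reflection $s_v$ in a $(-2)$-vector satisfies $s_v(x)=x+(x,v)v$; since $v\in T$, for every $x^*\in T^*$ we have $s_v(x^*)-x^*=(x^*,v)\,v\in T$, so $s_v$ induces the identity on $A_T=T^*/T$ and thus lies in $\widetilde{O}(T)$. That $s_v$ also lies in $O^+(T)$ is the standard fact that $(-2)$-reflections preserve the component $\calD_M$ for a lattice of signature $(14,2)$ (see \cite{GHS_kodaira}); geometrically it also follows because these reflections arise as Picard--Lefschetz monodromy of the family of cubic fourfolds, which preserves the polarized variation and hence the chosen component. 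Therefore $W(T)\subseteq O^+(T)\cap\widetilde{O}(T)=O^*(T)$.

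The reverse inclusion $O^*(T)\subseteq W(T)$ is the crux. Here I would invoke \cite{Ebeling_monodromy}: Ebeling shows that for $\Sigma O_{16}$ the monodromy group is generated by the Picard--Lefschetz reflections in a distinguished basis of vanishing cycles and coincides with $O^*(T)$. As this reflection group is contained in $W(T)$, combining with the previous paragraph yields $W(T)=O^*(T)$. This is the main obstacle of the proof, since it rests on Ebeling's explicit Coxeter--Dynkin diagram for $\Sigma O_{16}$ and the resulting description of its monodromy group, rather than on the lattice invariants alone.

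Finally, for the geometric statements: the equality $O^*(T)=W(T)$ is exactly Ebeling's identification of the local monodromy group of $\Sigma O_{16}$. By Pinkham's theory the deformations of $O_{16}$ are governed by the pairs $(X,H)$, so the monodromy representation on $T$ attached to the family of cubic fourfolds $Y$ over $\calM_0$ is the relevant object; it preserves the polarization and the component $\calD_M$, hence lands in $O^+(T)$, and it contains the local monodromy group $O^*(T)$ coming from the nodal and tangential degenerations. To compute $O^+(T)/O^*(T)$ I would observe that $O^*(T)=\ker\!\big(O^+(T)\to O(q_T)\big)$, so the quotient embeds into $O(q_T)\cong W(E_6)$ by \lemmaref{qTe6}; the map $O^+(T)\to O(q_T)$ is moreover onto, since $O(T)\to O(q_T)$ is onto (\propositionref{adeT}) and $\widetilde{O}(T)\not\subseteq O^+(T)$ (a root reflection in one of the summands supplies an element of $\widetilde{O}(T)$ outside $O^+(T)$, so any lift can be adjusted to have the correct spinor norm). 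Thus $O^+(T)/O^*(T)\cong W(E_6)$, and by \lemmaref{Me6} this $W(E_6)$ is realized through its action on the $27$ lines of $X\cap H$ (equivalently on the planes generating $M$, hence on $A_T\cong A_M$ and on $q_T$), which is precisely the monodromy at infinity; together with the inclusion of $O^*(T)$ in the monodromy group, this identifies the full monodromy group of the pairs $(X,H)$ with $O^+(T)$.
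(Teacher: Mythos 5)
Your proposal follows the paper's proof in all essentials: both identify $T$ with the Milnor lattice of $\Sigma O_{16}$ by matching lattice invariants (the paper computes them via a $\mu$-constant deformation to $x_1^3+x_2^3+x_3^3+x_4^3+x_5^2$ and the Thom--Sebastiani/Gabrielov theorems, then concludes exactly as you do, by uniqueness of a lattice with those invariants); both obtain $O^*(T)=W(T)=\,$local monodromy from Ebeling's Theorem 5.5 together with the fact that the monodromy group is generated by Picard--Lefschetz reflections; and both identify the monodromy group of the pairs with $O^+(T)$ by squeezing it between $O^*(T)$ (reflections in vanishing cycles) and $O^+(T)$ (Beauville's bound), using the $W(E_6)$-action on the $27$ lines to fill the quotient. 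The only genuine divergence is how surjectivity of $O^+(T)\rightarrow O(q_T)$ is obtained: the paper splits the sequence $1\to\widetilde{O}(T)\to O(T)\to O(q_T)\to 1$ using \lemmaref{Me6} and \cite[Cor. 1.5.2]{Nikulin_lattice}, so that a copy of $W(E_6)$ sits inside $O^+(T)$, whereas you correct an arbitrary lift by an element of $\widetilde{O}(T)\setminus O^+(T)$. That trick is legitimate, but your witness is wrong, and more seriously your spinor-norm bookkeeping is internally inconsistent.

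Here is the issue. In the paper's conventions $T\subset H^4(Y,\bZ)\cong I_{21,2}$ has signature $(14,2)$, so the three $D_4$ summands are \emph{positive} definite and their roots have square $+2$; the vanishing cycles are likewise positive ($v_n^2=2$, $v_t^2=4$, cf. \lemmaref{geoHeegner}). In signature $(14,2)$ a reflection in a vector of positive square fixes pointwise a subspace that contains negative definite $2$-planes, hence preserves the components of $\calD_M$ and lies in $O^+(T)$; a reflection in a $(-2)$-vector, e.g. $e_1-f_1\in U$, reverses the orientation of a negative definite $2$-plane containing it, hence does not. Consequently your two claims --- that $(-2)$-reflections preserve the component, and that a root reflection in a $D_4$ summand gives an element of $\widetilde{O}(T)\setminus O^+(T)$ --- contradict each other, and under the paper's definitions both are backwards. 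The repairs are immediate: $W(T)$ must be read as the group generated by reflections in $(+2)$-vectors (these are precisely the Picard--Lefschetz transformations; the ``$-2$'' in the statement reflects the singularity-theoretic normalization of the Milnor form), and such reflections do lie in $O^*(T)=\widetilde{O}(T)\cap O^+(T)$; for your surjectivity step, take the correcting element to be $r_{e_1-f_1}$, which acts trivially on $A_T$ (reflections in $\pm2$-vectors of $T$ always do) but is not in $O^+(T)$. With these sign corrections your argument is complete and agrees with the paper's in substance.
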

\begin{proof}
The proof is similar to that of \cite[Prop. 4.22]{Laza_n16}. A singularity of type $\Sigma O_{16}$ is the germ given by $$(f(x_1, x_2,x_3,x_4)+x_5^2 = 0, 0) \subset (\bC^5, 0)$$ where $f$ is a homogeneous polynomial of degree $3$ (N.B. $V(f)\subset \bC^4$ defines the cone over a cubic surface, or equivalently the standard $O_{16}$ singularity). The singularity $\Sigma O_{16}$ is quasihomogeneous and has corank $4$ (see \cite[\S 11.1]{AGZV1}) and Milnor number $\mu=16$. Any singularity of class $\Sigma O_{16}$ has a $\mu$-constant deformation to $(x_1^3+x_2^3+x_3^3+x_4^3+x_5^2, 0)$. For this Fermat-like equation, 
the Milnor lattice can be computed using the theorems by Thom and Sebastiani and by Gabrielov (cf. \cite[\S 2.7]{AGZV2}). A routine computation allows us to identify the Milnor lattice for $\Sigma O_{16}$ with the lattice $T$ (N.B. it suffices to check that the invariants of the two lattices match). 

At this point, we can apply directly  \cite[Thm. 5.5]{Ebeling_monodromy} to the
singularity  $\Sigma O_{16}$ and the lattice $T$, and conclude that $O^*(T)$ is the local monodromy group of $\Sigma O_{16}$. Because the local monodromy group is generated by Picard-Lefschetz transformations in the vanishing cycles (see for example \cite[\S 2.3]{AGZV2}), we get $O^*(T) = W(T)$. 

Consider the natural exact sequence $$1 \rightarrow \widetilde{O}(T) \rightarrow O(T) \rightarrow O(q_T) \rightarrow 1.$$ Because $O(q_T) \cong W(E_6)$ (see \lemmaref{qTe6}) one gets $O(T )/\widetilde{O}(T) \cong W(E_6)$. Moreover, by \propositionref{adeT}, \lemmaref{Me6} and \cite[Cor. 1.5.2]{Nikulin_lattice} the exact sequence is right split and thus $O(T) = \widetilde{O}(T) \rtimes W(E_6)$. Similarly, we have $O^+(T )/O^*(T) \cong W(E_6)$ and $O^+(T) = O^*(T) \rtimes W(E_6)$. 

Let $\calU \subset \bP H^0(\bP^4, \calO(3)) \times \bP H^0(\bP^4, \calO(1))$ be the open subset parameterizing pairs $(X,H)$ with $X$ a smooth cubic threefold and $H$ a hyperplane intersecting $X$ transversely. Fix a base point which corresponds to a pair $(X,H)$. Let $Y$ be the associated cubic fourfold. There exists a monodromy action of $\pi_1(\calU)$ on $H^4(Y, \bZ)$ and on $T$. The global monodromy group $\Pi := \mathrm{Im}(\pi_1(\calU) \rightarrow \Aut(H^4(Y, \bZ)))$ is contained in $O^+(T)$ (cf. \cite{beauville_monodromy}). Now we show $O^+(T) \subset \Pi$. One has $O^*(T) \subset \Pi$ because $O^*(T)$ is generated by the reflections in vanishing cycles corresponding to the degenerations of $X$. Meanwhile, $\Pi$ acts as $W(E_6)$ on the set of lines on the cubic surface $X \cap H$ (and on the corresponding planes in $Y$). It follows that $\Pi = O^+(T)$.
\end{proof}

\begin{remark}
The arguments of Ebeling \cite{Ebeling_monodromy} are purely arithmetic, and thus it is possible to give a proof for Proposition \ref{Te6} without any reference to the $\Sigma O_{16}$ singularity. We have opted to include the arguments on the Milnor fiber of $\Sigma O_{16}$ as a way to give geometric meaning to the structure of the arithmetic group $O(T)$ (and the companion subgroups $O^*(T)$, etc.). 
\end{remark}

\section{A period map for cubic pairs $(X,H)$} \label{pairs torelli}
We have associated a smooth cubic fourfold $Y$ to a pair $(X,H)$ consisting of a smooth cubic threefold $X$ and a transverse hyperplane $H$. In this section, we shall define a period map for cubic pairs $(X,H)$ using the period map for the cubic fourfolds $Y$ and investigate the local and global Torelli problems. 

Let us first review Voisin's Torelli theorem \cite{Voisin_cubic} for smooth cubic fourfolds (see also \cite{Hassett_cubic}). Let $\calC_0$ be the moduli space for smooth cubic fourfolds (constructed using GIT). Denote by $\Lambda = I _{21,2} = (+1)^{\oplus 21} \oplus (-1)^{\oplus 2}$ the abstract lattice isomorphic to the integral middle cohomology of a cubic fourfold, by $h$ the class of a hyperplane section, and by $\Lambda_0 = (h^2)^{\perp}_{\Lambda} \cong A_2 \oplus E_8^{\oplus 2} \oplus U^{\oplus 2}$ the primitive cohomology. Write $$\calD :=\{\omega \in \bP(\Lambda_0 \otimes \bC) \,|\, \omega \cdot \omega = 0, \omega \cdot \bar{\omega} <0\}_0$$ (where the subscript $0$ indicates the choice of a connected component). Set $\Gamma :=\{\gamma \in O(\Lambda) \,|\, \gamma(h^2) = h^2\}$ and let $\Gamma^+ \subset \Gamma$ be the index $2$ subgroup stabilizing $\calD$ (i.e. $\Gamma^+$ consists of automorphisms of $\Lambda$ that preserve $h^2$ and the orientation of a negative definite $2$-plane in $\Lambda$).

\begin{theorem}[Torelli theorem for cubic fourfolds \cite{Voisin_cubic}] \label{Voisin torelli}
The period map for cubic fourfolds $\calC_0 \rightarrow \calD/\Gamma^+$ is an open immersion of analytic spaces. 
\end{theorem}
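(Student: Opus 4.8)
The period map $\calP\colon \calC_0 \to \calD/\Gamma^+$ sends a smooth cubic fourfold $Y$ to the Hodge structure on $H^4_{\mathrm{prim}}(Y,\bZ) \cong \Lambda_0$, and the plan is to verify the two ingredients of an open immersion separately: that $\calP$ is a local isomorphism (immersion), and that it is injective. Openness is then automatic, since $\calC_0$ and $\calD/\Gamma^+$ are both smooth of dimension $20$, and an injective local isomorphism is an isomorphism onto an open subset. The immersion property is the easy half, via Griffiths residue theory. Writing $Y=V(F)\subset\bP^5$ with Jacobian ring $R=\bC[y_0,\dots,y_5]/J(F)$, one has $H^1(Y,T_Y)\cong R_3$ for the tangent space to $\calC_0$, together with $H^{3,1}(Y)\cong R_0=\bC$ and $H^{2,2}_{\mathrm{prim}}(Y)\cong R_3$. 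Since the Hodge structure is of $K3$ type, the tangent space to $\calD$ at the period point is $\Hom(H^{3,1},H^{2,2}_{\mathrm{prim}})$, and $d\calP$ is the cup-product (Yukawa) map $R_3\to\Hom(R_0,R_3)\cong R_3$, namely multiplication by a generator of $R_0\cong\bC$. This is an isomorphism, so $\calP$ is a local biholomorphism.

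Injectivity is the main content, and I would establish it by passing to the Fano variety of lines. By Beauville--Donagi, $F(Y)=\{\ell\in\Gr(2,6):\ell\subset Y\}$ is a hyperk\"ahler fourfold of $K3^{[2]}$-type, and the incidence correspondence induces an isomorphism of polarized Hodge structures $\alpha\colon H^4_{\mathrm{prim}}(Y,\bZ)(1)\xrightarrow{\sim}H^2_{\mathrm{prim}}(F(Y),\bZ)$ compatible, up to the known universal scalar, with the Pl\"ucker polarization $g$. Thus an equality $\calP(Y)=\calP(Y')$, realized by some $\psi\in\Gamma^+$ identifying the two primitive Hodge structures and fixing $h^2$, transports via $\alpha$ to a Hodge isometry $H^2(F(Y),\bZ)\to H^2(F(Y'),\bZ)$ sending $g$ to $g'$.

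I would then invoke the global Torelli theorem for manifolds of $K3^{[2]}$-type (Verbitsky, Markman): after checking that this isometry is a parallel-transport operator preserving the orientation of the positive cone and carrying the ample cone correctly, it is induced by an isomorphism $F(Y)\cong F(Y')$ of polarized hyperk\"ahler manifolds. Finally, the polarized pair $(F(Y),g)$ reconstructs $Y$: the linear system $|g|$ realizes $F(Y)\subset\Gr(2,6)\subset\bP(\wedge^2\bC^6)$, and $Y$ is recovered as the locus swept out in $\bP^5$ by the lines parametrized by $F(Y)$. Hence $Y\cong Y'$, and $\calP$ is injective.

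The crux is the passage from an abstract Hodge isometry to a genuine isomorphism of hyperk\"ahler manifolds. One must control the monodromy/parallel-transport structure and the action on the positive and ample cones, so as to exclude that $\psi$ induces only a birational or orientation-reversing correspondence between $F(Y)$ and $F(Y')$; these cone and orientation verifications, together with pinning down the integral normalization in the Beauville--Donagi isomorphism and checking that the reconstruction of $Y$ from $(F(Y),g)$ is canonical (i.e.\ that the polarized isomorphism descends to a projective-linear map of the ambient $\bP^5$), constitute the real work. It is exactly this geometric analysis of the variety of lines, carried out by Voisin directly together with a specialization argument reducing arbitrary smooth cubics to the generic case, that makes the theorem deep.
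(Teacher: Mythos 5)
The paper itself offers no proof of this statement: it is quoted as a black box from Voisin's 1986 paper (with \cite{Hassett_cubic} as a secondary reference), so the only meaningful comparison is with Voisin's original argument, and your route is genuinely different from it --- it is, in essence, Charles's modern proof. The local Torelli half agrees: both you and Voisin use the Griffiths residue identification of the differential with the multiplication map $R_3 \to \Hom(R_0,R_3)$ in the Jacobian ring. But for injectivity Voisin could not have argued as you do, since Verbitsky's global Torelli theorem and Markman's monodromy computations lie decades in the future; instead she works with the divisor of cubic fourfolds containing a plane, whose projection from the plane is a quadric surface bundle over $\bP^2$ with sextic discriminant curve, ties their Hodge structures to degree-$2$ (possibly twisted) $K3$ surfaces where the classical Torelli theorem applies, and then propagates injectivity to all of $\calC_0$ by a density/specialization argument combined with the local isomorphism property. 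Your proof instead transports the problem through the Beauville--Donagi isometry to $H^2$ of the Fano variety of lines and invokes the hyperk\"ahler global Torelli theorem. What your route buys is conceptual economy: for $K3^{[2]}$-type manifolds the discriminant group of the Beauville--Bogomolov lattice is $\bZ/2\bZ$, so by Markman's computation every orientation-preserving isometry is a parallel-transport operator, and the cone conditions are automatic once the isometry carries the ample class $g$ to the ample class $g'$; what it costs is reliance on much heavier (and, relative to the cited source, anachronistic) inputs. The two verifications you rightly flag as the real work are exactly what Charles supplies: linear normality of $F(Y)\subset\bP(\wedge^2\bC^6)$, so that the polarized isomorphism extends to a projectivity of $\bP^{14}$, and the identification of the quadrics through $F(Y)$ with those cutting out $\Gr(2,6)$, so that this projectivity preserves the Grassmannian and hence comes from $\PGL(6,\bC)$ (up to the duality involution), carrying the union of the lines of $Y$ onto that of $Y'$. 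One small imprecision: $\calC_0$ and $\calD/\Gamma^+$ are orbifolds rather than smooth spaces, so your openness argument should appeal to the fact that an injective holomorphic map between normal analytic spaces of the same pure dimension is open; this is harmless but worth stating correctly, since the theorem is an assertion about analytic spaces.
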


In our situation, we view $Y$ as ``lattice polarized" cubic fourfolds which are analogues of lattice polarized $K3$ surfaces (cf. \cite{Dolgachev_latticek3})
and have been used by Hassett \cite{Hassett_cubic}. Fix a sufficiently general pair $(X_b,H_b)$. Let $Y_b$ be the associated cubic fourfold which admits an involution $\sigma$ (see (\ref{sigma})). We have described the $\sigma^*$-invariant primitive sublattice $M \subset H^4(Y_b, \bZ)$ (which are generated by the classes $[F_0], [F_1], \dots, [F_6]$) in Section \ref{Y lattice}. Let us fix a primitive embedding $M \subset H^4(Y_b, \bZ) \cong \Lambda$. Write the image of $[F_i]$ in $\Lambda$ by $f_i$. In what follows, we identify $M$ with its image in $\Lambda$. In other words, $M \subset \Lambda$ will be considered as an abstract lattice spanned by $f_0, \dots, f_6$ (together with a primitive embedding into $\Lambda$). The intersection form of $M$ is given by the Gram matrix in \lemmaref{N gram}. In particular, $M$ contains $h^2$. Again $T = M_{\Lambda}^{\perp}$. Let $O^+(T) \subset O(T)$ (resp. $\widetilde{O}(T) \subset O(T)$) be the subgroup of isometries of $T$ preserving the orientation of a negative definite $2$-plane in $T$ (resp. the subgroup of isometries of $T$ that induce the identity on $A_T$). Set $O^*(T) = O^+(T) \cap \widetilde{O}(T)$. 

\begin{remark}
%The embedding of $M$ (which is an odd lattice) into $\Lambda$ might not be unique (even modulo the action of $O(\Lambda)$). The primitive embedding of $(h^2)^{\perp} \subset \Lambda^0$ up to the action of $O(q_{\Lambda_0})$ can be classified using \cite[Prop. 1.15.1]{Nikulin_lattice} (see also \cite[\S 3.2]{Hassett_cubic}). In our situation we fix an embedding $M \subset \Lambda$ as described above. 
The embedding $M \subset \Lambda$ depends on the choice of a marking $H^4(Y_b, \bZ)\! \stackrel{\cong}{\rightarrow} \Lambda$. It also depends on the choice of an orthonormal basis $\{ \bar{F}_0, \bar{F}_1, \dots, \bar{F}_6\}$ of $\Pic(X_b \cap H_b) \cong I_{1,6}$ (i.e. the choices of the classes $[F_0], [F_1], \dots, [F_6]$). By \propositionref{adeT}, \lemmaref{Me6} and \cite[Cor. 1.5.2] {Nikulin_lattice} isometries of $M$ fixing $h^2$ extend to automorphisms of $\Lambda$. As a result, our choice of the embedding $M \subset \Lambda$ is unique up to the action of $O(\Lambda)$. See also \cite[Thm. 3.7]{Laza_maxirrat}.
\end{remark}

\begin{notation}
Let us introduce the following notations. 
\begin{itemize}
\item $\calM_0$: the moduli space of pairs $(X,H)$ consisting of a smooth cubic threefold $X$ and a transverse hyperplane $H$ (constructed as a Zariski open subset of the GIT quotient $\bP H^0(\bP^4, \calO_{\bP^4}(3)) \times \bP H^0(\bP^4, \calO_{\bP^4}(1)) \git \SL(5, \bC)$). Because such pairs $(X,H)$ are GIT stable, $\calM_0$ is a geometric quotient.
\item $\calD_M:= \{\omega \in \bP(\Lambda_0 \otimes \bC) \,|\, \omega \cdot \omega = 0, \omega \cdot \bar{\omega} <0, \omega \cdot M =0\}_0$. It is convenient to identify $\calD_M$ to the domain $\{\omega \in \bP(T \otimes \bC) \,|\, \omega \cdot \omega = 0, \omega \cdot \bar{\omega} <0\}_0$. Note that $O^+(T)$ acts on $\calD_M$ naturally. 
\item $\Gamma^+_M:=\{\gamma \in \Gamma^+ \,|\, \gamma(h^2)=h^2, \gamma(M) \subset M\}$. By \propositionref{adeT}, \lemmaref{Me6} and \cite[Cor.1.5.2]{Nikulin_lattice} $\Gamma^+_M$ can be identified with $O^+(T)$. 
\end{itemize}
\end{notation}

%By \lemmaref{Eckardt eqn} admitting Eckardt points is an algebraic condition. We define $\calM_0^{\mathrm{Eckardt}} \subset \calC_0$ as the geometric quotient: $$\calM_0^{\mathrm{Eckardt}} :=\{\text{smooth cubic fourfolds} \, Y \, \text{admitting Eckardt points}\}/(\text{proj. equiv}).$$ 

We define a period map for cubic pairs $(X,H) \in \calM_0$. Let $Y$ be the associated smooth cubic fourfold. After choosing an orthonormal basis of $\Pic(X \cap H) \cong I_{1,6}$ we get a primitive embedding $j: M \hookrightarrow H^4(Y,\bZ) \cap H^{2,2}(Y) \subset H^4(Y, \bZ)$ (N.B. $j$ sends $h^2$ to the square of the hyperplane class). To associate a period point on $\calD_M$ to $Y$, one needs to choose a marking $\phi: \Lambda \stackrel{\cong}{\rightarrow} H^4(Y, \bZ)$ satisfying $\phi|_M =j$. Now we consider $\phi_{\bC}^{-1}(H^{3,1}(Y))$ (where $\phi_\bC := \phi \otimes \bC$) which belongs to the subdomain $\calD_M$. One also has to take the action of $\Gamma_M^+$ (which can be identified with $O^+(T)$) on the markings $\phi$ into consideration. Specifically, suppose we choose a different orthonormal basis of $\Pic(X \cap H)$ which gives a different $j'$. Then $j$ and $j'$ are related by an element $g_M$ of the stabilizer group $O(M)_{h^2}$. By \propositionref{adeT}, \lemmaref{Me6} and \cite[Cor.1.5.2]{Nikulin_lattice} $g_M$ can be extended to an automorphism $g \in O(\Lambda)$ such that $g|_T \in O^+(T)$. The marking $\phi \circ g$ satisfies $(\phi \circ g)|_M = j'$. Moreover, it is not difficult to verify that the $O^+(T)$-orbit of the period point does not depend on the projective equivalence class of $(X,H)$. To sum up, we get the following period map (the choice of the monodromy group is also explained in \propositionref{Te6}) $$\calP_0: \calM_0 \rightarrow \calD_M/O^+(T).$$ 

Now let us prove the local Torelli and the global Torelli theorems for $\calP_0$.
\begin{proposition} \label{dP0 injective}
The period map $\calP_0: \calM_0 \rightarrow \calD_M/O^+(T)$ is a local isomorphism at every point of $\calM_0$.
\end{proposition}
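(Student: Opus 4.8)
The plan is to compute the differential $d\calP_0$ directly, using Griffiths' description of the infinitesimal period map for cubic fourfolds carried out $\sigma$-equivariantly, rather than appealing to the global statement \theoremref{Voisin torelli}. Recall that for the smooth cubic fourfold $Y=V(F)$ with $F=f+l y_5^2$ the Jacobian ring $R(F)=\bC[y_0,\dots,y_5]/J(F)$ computes the primitive Hodge pieces, $H^{3,1}(Y)\cong R(F)_0=\bC$ and $H^{2,2}_{\mathrm{prim}}(Y)\cong R(F)_3$, and the differential of the cubic fourfold period map at $[Y]$ is the cup-product map $R(F)_3\to\Hom\bigl(H^{3,1}(Y),H^{2,2}_{\mathrm{prim}}(Y)\bigr)$ sending a deformation class $G$ to multiplication by $G$. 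Since $H^{3,1}(Y)$ is spanned by $\Res(\Omega/F^2)$, this map is, up to a nonzero scalar, $G\mapsto\Res(G\,\Omega/F^3)=[G]$, i.e. the canonical identification of $R(F)_3$ with $H^{2,2}_{\mathrm{prim}}(Y)$. As recorded in the proof of \lemmaref{sigma decomposition}, $\sigma^*$ acts on $\Res(A\,\Omega/F^3)$ by $-1$ times the sign of its action on the monomial $A$; hence the classes with $A$ even in $y_5$ span the $14$-dimensional $H^{2,2}_{\mathrm{prim}}(Y)_{-1}$, which is the $(2,2)$-component of $T\otimes\bC=H^4_{\mathrm{prim}}(Y,\bC)_{-1}$, and correspondingly $T_{[\omega]}\calD_M\cong\Hom\bigl(H^{3,1}(Y),H^{2,2}_{\mathrm{prim}}(Y)_{-1}\bigr)$.

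The heart of the argument will be to identify $T_{(X,H)}\calM_0$ with the $\sigma$-invariant (even in $y_5$) part $R(F)_3^{+}$ of the deformation space of $Y$. A first-order deformation of the pair is a pair $(\delta f,\delta l)$ with $\delta f\in\bC[y_0,\dots,y_4]_3$ and $\delta l\in\bC[y_0,\dots,y_4]_1$, deforming $F$ in the direction $\delta f+(\delta l)y_5^2$, which is precisely an even-in-$y_5$ cubic. Writing such a cubic as $(p,q)$ with $p\in\bC[y_0,\dots,y_4]_3$ and $q\in\bC[y_0,\dots,y_4]_1$ the coefficient of $y_5^2$, I would check that the only generators of $J(F)$ contributing even cubics are $y_j\,\partial F/\partial y_i$ with $i,j\le 4$ and $y_5\,\partial F/\partial y_5=2l y_5^2$, so that
\[
J(F)_3^{+}=\Span\bigl\{(y_j\,\partial f/\partial y_i,\ y_j\,\partial l/\partial y_i): 0\le i,j\le 4\bigr\}+\bC\,(0,l).
\]
The first span is exactly the image of the infinitesimal $\mathfrak{gl}(5,\bC)$-action $(\xi f,\xi l)$; using the Euler relation $\sum_{j\le 4} y_j\,\partial F/\partial y_j=(3f,l)$ together with $(0,l)\in J(F)_3^{+}$ one gets $(f,0)\in J(F)_3^{+}$ as well, so modding out by $J(F)_3^{+}$ is the same as projectivizing both factors and quotienting by $\mathfrak{sl}(5,\bC)$. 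This yields $R(F)_3^{+}\cong T_{(X,H)}\calM_0$, and in particular $\dim\calM_0=\dim R(F)_3^{+}=14=\dim\calD_M$.

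Combining the two steps, the Kodaira--Spencer map identifies $T_{(X,H)}\calM_0$ with $R(F)_3^{+}$, and the infinitesimal period map then sends $R(F)_3^{+}$ by the canonical identification onto $H^{2,2}_{\mathrm{prim}}(Y)_{-1}=T_{[\omega]}\calD_M$; hence $d\calP_0$ is an isomorphism of $14$-dimensional tangent spaces, and since $O^+(T)$ acts properly discontinuously on $\calD_M$ this makes $\calP_0$ a local isomorphism everywhere. (Alternatively, injectivity of $d\calP_0$ can be read off from \theoremref{Voisin torelli}, $\calP_0$ being the restriction of the cubic fourfold period map to the $\sigma$-invariant, i.e. $M$-polarized, deformations, with surjectivity onto $T\calD_M$ following from the equality of dimensions.) The one genuinely delicate point is the deformation-theoretic bookkeeping of the second paragraph---matching the even part of the Jacobian ideal of $F$ with the trivial deformations of the pair coming from $\SL(5,\bC)$ and the two projectivizations; once that identification is in place the Hodge-theoretic conclusion is immediate.
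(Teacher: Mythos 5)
Your proof is correct and takes essentially the same route as the paper's: both rest on Griffiths' infinitesimal Torelli for the cubic fourfold $Y$ (kernel of the infinitesimal period map $= J(F)_3$) together with the bookkeeping that matches the even-in-$y_5$ part of $J(F)_3$ with the infinitesimal $\mathfrak{gl}(5,\bC)$-action plus the rescaling directions $(f,0)$ and $(0,l)$. The only difference is packaging: the paper proves injectivity of $d\calP_0$ and concludes by equality of dimensions, whereas you exhibit $d\calP_0$ directly as the canonical isomorphism $R(F)_3^{+}\cong H^{2,2}_{\mathrm{prim}}(Y)_{-1}$.
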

\begin{proof}
It is enough to show the differential $d\calP_0$ is injective (N.B. $\dim \calM_0 = \dim \calD_M/O^+(T) = 14$). The infinitesimal deformations of hypersurfaces in projective spaces have been well studied (see for example \cite[\S5.4]{CM-SP}). Let $Y$ be the smooth cubic fourfold associated to a smooth cubic threefold $X$ and a transverse hyperplane $H$. Write the equation of $Y$ as $F=f(y_0,\dots,y_4)+l(y_0,\dots,y_4)y_5^2$. In particular, $X$ and $H$ are subvarieties of $(y_5=0) \cong \bP^4$ and are cut out in $\bP^4$ by $f=0$ and $l=0$ respectively. 
%Set $R(F)=\bC[y_0,\dots,y_5]/J(F)$ to be the Jacobian ring of $F$. Denote the canonical section of $K_{\bP^5}(6)$ by $\Omega$. The differential of the period map for smooth cubic fourfolds is given by $$R(F)_3 \rightarrow \Hom(H^{3,1}(Y), H_{\mathrm{prim}}^{2,2}(Y)), \,\,\, F' \mapsto (c \, \Res \frac{\Omega}{F^2} \mapsto c \,\Res \frac{F'\Omega}{F^3})$$ and is bijective (see for example \cite[\S0]{Voisin_cubic}). 
Let $X'$ (resp. $H'$) be a deformation of the hypersurface $X$ (resp. $H$) in $\bP^4$. Assume the equations for $X'$ and $H'$ are $f+\epsilon f'=0$ and $l+\epsilon l'=0$ respectively. We view $f'$ and $l'$ as elements of $\bC[y_0, \dots, y_4]_3/(\bC \cdot f)$ and $\bC[y_0, \dots, y_4]_1/(\bC \cdot l)$ (where $\bC[y_0, \dots, y_4]_d$ denotes the vector space of degree $d$ homogeneous polynomials) respectively. Notice that $f+\epsilon f' + (l+\epsilon l')y_5^2 = (f+ly_5^2)+\epsilon(f'+l'y_5^2)$. Suppose $d\calP_0(X',H')=\overrightarrow{0}$. From the local Torelli for cubic fourfolds (see for example \cite[\S0]{Voisin_cubic}) we deduce that $f'+l'y_5^2$ is contained in the Jacobian ideal $J(F)$ of $F$. A direct computation shows that $$f'=\frac{\partial f}{\partial y_0}l_0 + \dots +\frac{\partial f}{\partial y_4}l_4 \,\,\,\text{and}\,\,\, l' = \frac{\partial l}{\partial y_0}l_0 + \dots +\frac{\partial l}{\partial y_4}l_4 +2c \cdot l$$ where $l_k$ ($0 \leq k \leq 4$) are linear polynomials in $y_0, \dots, y_4$ and $c$ is a constant number. Thus $f' \in J(f)/(\bC \cdot f)$ and $l' \in J(l)/(\bC \cdot l)$. Moreover, the pencil spanned by $(X', H')$ and $(X,H)$ is tangent to the $\mathrm{GL}(5, \bC)$-orbit through $(X,H)$ (cf. \cite[Ex. 5.4.1]{CM-SP}). The injectivity of $d\calP_0$ then follows (see also the proof of \cite[Lem. 1.8]{ACT_cubic3}). 
\end{proof}

\begin{theorem} \label{P0 injective}
The period map $\calP_0: \calM_0 \rightarrow \calD_M/O^+(T)$ is an isomorphism onto its image.
\end{theorem}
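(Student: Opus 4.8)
The plan is to upgrade the local statement of \propositionref{dP0 injective} to a global one. Since $\dim \calM_0 = \dim \calD_M/O^+(T) = 14$ and $d\calP_0$ is injective, $\calP_0$ is a local isomorphism, hence an open map; so once we establish that $\calP_0$ is injective on points, it is an open immersion and therefore an isomorphism onto its (open) image. Thus the entire content is global injectivity, which I would deduce from Voisin's Torelli theorem for cubic fourfolds (\theoremref{Voisin torelli}).

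Concretely, suppose $(X_1,H_1),(X_2,H_2)\in\calM_0$ satisfy $\calP_0(X_1,H_1)=\calP_0(X_2,H_2)$. Let $Y_1,Y_2$ be the associated cubic fourfolds, with markings $\phi_i$ satisfying $\phi_i|_M=j_i$ and periods $\omega_i=\phi_{i,\bC}^{-1}(H^{3,1}(Y_i))\in\calD_M$. By hypothesis $\omega_2=g(\omega_1)$ for some $g\in O^+(T)$. Using \propositionref{adeT}, \lemmaref{Me6} and \cite[Cor.\ 1.5.2]{Nikulin_lattice} I would extend $g$ to $\tilde g\in\Gamma^+_M\subset\Gamma^+$ fixing $h^2$ and preserving $M$, and replace $\phi_1$ by $\phi_1\circ\tilde g^{-1}$, so that $Y_1$ and $Y_2$ now carry the \emph{same} period point in $\calD$. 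Then \theoremref{Voisin torelli} gives $Y_1\cong Y_2$, and in its strong (marked) form produces a projective isomorphism $\psi\colon Y_1\to Y_2$ whose cohomology action is the prescribed Hodge isometry $\phi_1\circ\phi_2^{-1}$; by construction $\psi^*$ carries the polarizing lattice $M\subset H^4(Y_2,\bZ)$ onto $M\subset H^4(Y_1,\bZ)$ and hence $T$ onto $T$.

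The crux is that $\psi$ respects the Eckardt structure. Here I would invoke \lemmaref{sigma decomposition} together with \propositionref{h2 perp}: on $H^4_{\mathrm{prim}}(Y,\bQ)$ the involution $\sigma^*$ acts as $+1$ on the $6$-dimensional invariant part $M\cap H^4_{\mathrm{prim}}(Y,\bQ)\cong E_6(2)\otimes\bQ$ and as $-1$ on $T\otimes\bQ$, while $\sigma^*(h^2)=h^2$; since $H^4(Y,\bQ)=(M\otimes\bQ)\oplus(T\otimes\bQ)$, the isometry $\sigma^*$ is completely determined by this orthogonal splitting. Because $\psi^*$ matches the $M$- and $T$-parts of $Y_1$ and $Y_2$, the automorphism $\tau:=\psi^{-1}\sigma_2\psi\in\Aut(Y_1)$ satisfies $\tau^*=\psi^*\sigma_2^*(\psi^*)^{-1}=\sigma_1^*$. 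Faithfulness of the action of $\Aut(Y_1)$ on $H^4(Y_1,\bZ)$ (which holds for smooth cubic fourfolds) then forces $\tau=\sigma_1$, i.e.\ $\psi\circ\sigma_1=\sigma_2\circ\psi$. As $\psi,\sigma_1,\sigma_2$ all lie in $\PGL_6$ and $Y_1$ spans $\bP^5$, this relation holds on all of $\bP^5$, so $\psi$ sends the fixed hyperplane $\Pi_1=(y_5=0)$ and the Eckardt point $p_1$ of $\sigma_1$ to $\Pi_2$ and $p_2$.

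Finally I would recover the pairs: one has $X_i=Y_i\cap\Pi_i$ and $H_i=T_{p_i}Y_i\cap\Pi_i$ (indeed $T_{p_i}Y_i=(l_i=0)$). From $\psi(\Pi_1)=\Pi_2$, $\psi(p_1)=p_2$ and $\psi(T_{p_1}Y_1)=T_{p_2}Y_2$ it follows that $\psi(X_1)=X_2$ and $\psi(H_1)=H_2$, so $(X_1,H_1)$ and $(X_2,H_2)$ are projectively equivalent, hence equal in $\calM_0$. This proves injectivity and, with the local isomorphism property, the theorem. I expect the main obstacle to be precisely the middle step: arranging that the Torelli isomorphism of cubic fourfolds can be chosen to, and must, intertwine the two involutions — this is where the eigenspace description of $\sigma^*$ and the faithfulness of $\Aut(Y)$ on cohomology carry the argument, and where care is needed for special members of $\calM_0$ whose $Y$ may admit several Eckardt points.
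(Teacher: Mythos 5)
Your skeleton --- \propositionref{dP0 injective} for local isomorphism, global injectivity via Torelli, then recovery of the pair from the fixed locus of the involution --- is the same as the paper's, and two of your ingredients are sound: the observation that $\sigma^*$ is $+1$ on $M\otimes\bQ$ and $-1$ on $T\otimes\bQ$, so that any isometry matching the two splittings conjugates $\sigma_2^*$ to $\sigma_1^*$, and the recovery $X_i=Y_i\cap\Pi_i$, $H_i=T_{p_i}Y_i\cap\Pi_i$. The genuine gap is the pivotal middle step: \theoremref{Voisin torelli} has no ``strong (marked) form.'' As stated (and as proved in \cite{Voisin_cubic}), it only says the period map $\calC_0\rightarrow\calD/\Gamma^+$ is injective. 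It therefore produces \emph{some} isomorphism $\psi\colon Y_1\rightarrow Y_2$, but the induced isometry $\psi^*$ agrees with your prescribed isometry $\phi_1\circ\phi_2^{-1}$ only up to an element $\gamma$ of the stabilizer of the period point in $\Gamma^+$. Your conjugation argument needs $\psi^*(M_2)=M_1$, i.e.\ needs $\gamma$ to preserve $M\subset\Lambda$. For a \emph{generic} period point this is automatic, since there $M$ is the entire integral $(2,2)$-lattice and every Hodge isometry preserves it; but exactly at the special members you flag in your last sentence --- where the algebraic lattice jumps, e.g.\ $Y$ has several Eckardt points --- the stabilizer can move $M$ inside the larger algebraic lattice, and the argument collapses. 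So as written you have proved only \emph{generic} injectivity, whereas your concluding step (``injective on points $\Rightarrow$ open immersion'') requires injectivity everywhere. You also quietly use faithfulness of $\Aut(Y)\rightarrow O(H^4(Y,\bZ))$, which is true for smooth cubic fourfolds but is nowhere stated or proved in the paper.

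The paper closes precisely this hole by a different maneuver: it proves \emph{only} generic injectivity, using the geometric fact that a generic Eckardt cubic fourfold has a \emph{unique} Eckardt point (cf.\ \cite{CC_star}), so that any abstract isomorphism $\beta\colon Y\rightarrow Y'$ furnished by weak Torelli must carry $p$ to $p'$, hence intertwine the deck involutions of the projections from these points, hence carry $X+H$ to $X'+H'$; it then upgrades ``generically injective $+$ local isomorphism'' to ``isomorphism onto the image,'' which is legitimate because $\calP_0$ is algebraic and $\calD_M/O^+(T)$ is normal (a Zariski-main-theorem argument --- note this implication is \emph{not} purely formal for analytic maps). Two ways to repair your proof: (i) import the genuine strong Torelli theorem for cubic fourfolds (every Hodge isometry fixing $h^2$ is induced by a unique isomorphism), together with faithfulness of the $\Aut$-action; both are available in the literature (e.g.\ Huybrechts' book on cubic hypersurfaces) but are strictly stronger than \theoremref{Voisin torelli}, and with them your argument is valid at all points of $\calM_0$, which is in fact cleaner than the paper's; or (ii) keep weak Torelli, observe that your argument already gives generic injectivity (since the stabilizer preserves $M$ generically), and add the algebraicity/ZMT upgrade exactly as the paper does.
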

\begin{proof}
Suppose the pairs $(X,H)$ and $(X', H')$ (denote the associated smooth cubic fourfolds by $Y$ and $Y'$ respectively) have the same image in $\calD_M/O^+(T)$. That is to say, there exist markings $\phi: \Lambda \rightarrow H^4(Y, \bZ)$ and $\phi': \Lambda \rightarrow H^4(Y', \bZ)$ and an isometry $g_T \in O^+(T)$ such that $g_T(\phi_\bC^{-1}(H^{3,1}(Y))) = \phi'^{-1}_\bC(H^{3,1}(Y'))$ (N.B. $\phi_\bC^{-1}(H^{3,1}(Y))$ and $\phi'^{-1}_\bC(H^{3,1}(Y'))$ both belong to $T$). \propositionref{adeT} and \lemmaref{Me6} (see also \cite[Cor.1.5.2]{Nikulin_lattice}) allow one to extend $g_T$ to an automorphism $g \in O(\Lambda)$. In particular, $g$ fixes $h^2$. Now we apply the global Torelli theorem for cubic fourfolds (see \theoremref{Voisin torelli}). It follows that the associated cubic fourfolds $Y$ and $Y' $ (which admit Eckardt points) are projectively equivalent. In other words, there exists a projective transform $\beta$ sending $Y$ to $Y'$. Generically, $Y$ (resp. $Y'$) admits one Eckardt point $p$ (resp. $p'$) (cf. \cite[Thm. 2.10, Thm. 4.2 and \S6]{CC_star}). (Using \lemmaref{Eckardt eqn} it is not difficult to see that the locus of smooth cubic fourfolds containing Eckardt points is irreducible, so we can speak of a generic such fourfold.) Consider the extended involution $\sigma$ (resp. $\sigma'$) (see (\ref{sigma})) on the blowup $\Bl_pY$ (resp. $\Bl_{p'}Y'$). Since $\beta$ carries the fixed locus of $\sigma$ (which is $X+H$ by \propositionref{Y projection}) to the fixed locus of $\sigma'$ (which is $X'+H'$), $(X,H)$ and $(X', H')$ are projectively equivalent. This proves that $\calP_0$ is generically injective. From \propositionref{dP0 injective} we know $\calP_0$ is a local isomorphism. As a result, $\calP_0$ is an isomorphism onto its image.
\end{proof}

\begin{remark}
By \cite[Prop. 2.2.3]{Hassett_cubic} the period map for cubic fourfolds $\calC_0 \rightarrow \calD/\Gamma^+$ is an algebraic map. A similar argument shows that $\calP_0: \calM_0 \rightarrow \calD_M/O^+(T)$ is also algebraically defined.
\end{remark}

\begin{remark}
Let $Z$ be a quasi-smooth hypersurface of degree $6$ in $\bP(1,2,2,2,2,3)$. Note that $Z$ is a double cover of $\bP^4$ branched over a smooth cubic threefold $X$ and a hyperplane $H$ intersecting $X$ transversely (cf. Remark \ref{weighted Z}). The isomorphism class of $Z$ is determined by the isomorphism class of the pair $(X,H)$ consisting of the branching data $X$ and $H$. Let $Y$ be the smooth cubic fourfold associated to $(X,H)$. The morphism $\Bl_pY \cong \Bl_{X \cap H} Z \rightarrow Z$ (see \propositionref{Y projection}) allows one to identify the Hodge structures on $H_{\mathrm{prim}}^4(Z, \bZ)$ and $T=M_{H^4(Y,\bZ)}^{\perp}$. A global Torelli theorem for weighted degree $6$ hypersurfaces in $\bP(1,2,2,2,2,3)$ can be derived from \theoremref{P0 injective} (compare \cite{Saito_wttorelli} and \cite{DT_wttorelli}). 
\end{remark}

\section{Special Heegner divisors in the period domain} \label{Heegner}
We introduce two Heegner divisors in the locally symmetric domain $\calD_M/O^+(T)$, namely, the nodal Heegner divisor $H_n$ and the tangential Heegner divisor $H_t$. We also establish a Borcherds' relation between the Hodge line bundle on $\calD_M/O^+(T)$ and these Heegner divisors.  

The locally symmetric domain $\calD_M/O^+(T)$ is associated to the transcendental lattice $T \cong U^{\oplus 2} \oplus D_4^{\oplus 3}$. The lattice theoretical invariants (e.g. the discriminant group $A_T$ and the discriminant quadratic form $q_T$) for $T$ have been computed in \lemmaref{T invariants}, \propositionref{adeT} and \lemmaref{qTe6}. We also need the following notations.

\begin{notation}
Let $v$ be an element of $T$. 
\begin{itemize}
\item $\mathrm{div}(v)$: the positive generator of the ideal $v \cdot T \subset \bZ$ (called the \emph{divisibility} of $v$). 
\item $\hat{v} :=v/\mathrm{div}(v)$ (viewed as an element in the discriminant group $A_T$).
\item $r_v: T \otimes \bQ \rightarrow T \otimes \bQ$: the reflection in the hyperplane $v^{\perp}$ (provided that $v$ is non isotropic) defined by $r_v(x) = x - 2\frac{x \cdot v}{v^2}v$. 
\end{itemize}
\end{notation}

For later use, let us choose a set of generators for $T \cong U^{\oplus 2} \oplus D_4^{\oplus 3}$. Denote a standard basis for the first copy of $U$ (resp. the second copy of $U$) by $e_1$ and $f_1$ (resp. $e_2$ and $f_2$). In particular, $e_i^2 =f_i^2=0$ and $e_i \cdot f_i = 1$ ($i=1,2$). The lattice $T$ contains $3$ copies of $D_4$. For the first copy of $D_4$, we let $\alpha_1,\dots, \alpha_4$ be the simple roots as in \cite[\S VI.4.8]{Bourbaki_lie4-6}. Note that $\alpha_1^2=\dots=\alpha_4^2=2$, $\alpha_1 \cdot \alpha_2 = \alpha_2 \cdot \alpha_3 = \alpha_2 \cdot \alpha_4 = -1$, and all the other intersection numbers are zero. The discriminant group $A_{D_4}$ is isomorphic to $(\bZ/2\bZ)^2$ which is  generated by $\frac12(\alpha_1+\alpha_3)$ and $\frac12(\alpha_1+\alpha_4)$ (N.B. $\frac12(\alpha_3+\alpha_4) = \frac12(\alpha_1+\alpha_3)+\frac12(\alpha_1+\alpha_4)$ in $A_{D_4}$). Also, $q_{D_4}(\frac12(\alpha_1+\alpha_3)) = q_{D_4}(\frac12(\alpha_1+\alpha_4)) = q_{D_4}(\frac12(\alpha_3+\alpha_4)) \equiv 1 \pmod{2\bZ}$. Similarly, we choose a basis $\beta_1, \dots, \beta_4$ (resp. $\gamma_1, \dots, \gamma_4$) for the second copy (resp. the third copy) of $D_4$. 

Before defining the Heegner divisors, we note the following lemmas.
\begin{lemma} \label{typeab}
Let $v \in T$ be a primitive vector. Suppose that the reflection $r_v$ preserves $T$ (i.e. $r_v(T) \subset T$) and $\calD_M \cap v^{\perp} \neq \emptyset$. Then there are two possibilities. 
\begin{enumerate}
\item $v^2=2$ and $\mathrm{div}(v)=1$. In this case, $\hat{v} =0$ in $A_T$.
\item $v^2=4$ and $\mathrm{div}(v)=2$. In this case, $q_T(\hat{v}) \equiv 1 \pmod{2\bZ}$.
\end{enumerate}
\end{lemma}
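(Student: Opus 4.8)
The plan is to read everything off from two integrality constraints on $v$ together with the arithmetic of the discriminant form $q_T$, all of which follow from the structure of $T$ recorded in \propositionref{adeT} and \lemmaref{T invariants}. First I would set $d := \mathrm{div}(v)$, so that by definition $v\cdot T = d\bZ$ and $\hat v = v/d \in T^*$ represents a class in $A_T$. The hypothesis $\calD_M \cap v^{\perp} \neq \emptyset$ forces $v^2 > 0$. Indeed $v^2 \neq 0$ since $r_v$ is defined; and if $v^2 < 0$, a point $\omega \in \calD_M$ spans together with its conjugate a negative-definite real $2$-plane $P = \Span_{\bR}(\re\omega, \im\omega)$ in $T\otimes\bR$, while $\omega \in v^{\perp}$ gives $v \perp P$, so $\langle v\rangle \oplus P$ would be a negative-definite $3$-subspace, contradicting the signature $(14,2)$ of $T$. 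As $T$ is even, $v^2$ is therefore a positive even integer. Finally, since $A_T \cong (\bZ/2\bZ)^6$ is $2$-elementary (\lemmaref{T invariants}), the class $\hat v$ is $2$-torsion, so $\tfrac{2}{d}v \in T$; primitivity of $v$ then yields $d \mid 2$, i.e. $d \in \{1,2\}$.

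The second step is to exploit $r_v(T)\subset T$. Writing $r_v(x) = x - \tfrac{2(x\cdot v)}{v^2}v$ and using primitivity of $v$, the condition $r_v(x)\in T$ for all $x\in T$ is equivalent to $v^2 \mid 2(x\cdot v)$ for every $x$; as $x\cdot v$ ranges over $d\bZ$, this is exactly $v^2 \mid 2d$. Combined with $v^2 = v\cdot v \in d\bZ$, i.e. $d\mid v^2$, I obtain $d \mid v^2 \mid 2d$, so $v^2 \in \{d,2d\}$. I would then split on $d$. For $d=1$: $v^2 \mid 2$ together with $v^2$ positive even forces $v^2 = 2$, and $\hat v = v \equiv 0$ in $A_T$, which is case (1). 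For $d=2$: $v^2 \mid 4$ with $v^2$ positive even leaves $v^2 \in \{2,4\}$.

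The one point that requires the finer arithmetic of $q_T$ — which I regard as the main (though still routine) obstacle — is eliminating the spurious case $v^2 = 2,\ d = 2$ and pinning down $q_T(\hat v)$ in the surviving case. Here I would compute $q_T(\hat v) \equiv \hat v\cdot\hat v = v^2/d^2 \pmod{2\bZ}$. By \lemmaref{T invariants} the form $q_T$ (the discriminant form of $D_4^{\oplus 3}$) takes only the values $0$ and $1$ in $\bQ/2\bZ$; hence $v^2 = 2,\ d = 2$ would give $q_T(\hat v) = \tfrac12$, which is impossible, whereas $v^2 = 4,\ d = 2$ gives $q_T(\hat v) = 1$, which is case (2). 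This exhausts the alternatives. The only subtlety to verify carefully is that $q_T$ genuinely omits the half-integer values, for which I would invoke the explicit value distribution ($28$ zeros and $36$ ones) established in the proof of \lemmaref{T invariants}.
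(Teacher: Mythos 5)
Your proposal is correct and follows essentially the same route as the paper's proof: both arguments combine the positivity of $v^2$ forced by $\calD_M \cap v^{\perp} \neq \emptyset$, the equivalence of $r_v(T)\subset T$ with $v^2 \mid 2\,\mathrm{div}(v)$, the evenness of $T$, and the integrality of $q_T$ (equivalently $\mathrm{div}(v)^2 \mid v^2$) to pin down the two cases. The only cosmetic difference is that you bound $\mathrm{div}(v) \in \{1,2\}$ via the $2$-elementarity of $A_T$ before invoking integrality of $q_T$, whereas the paper extracts $\mathrm{div}(v)^2 \mid v^2$ directly and lets the divisibilities $d^2 \mid v^2 \mid 2d$ do the same work.
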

\begin{proof}
The assumption $r_v(T) \subset T$ is equivalent to $v^2|2\mathrm{div}(v)$. The intersection $\calD_M \cap v^{\perp}$ is empty if $v^2 <0$ (Hodge-Riemann bilinear relations). By the proof of \lemmaref{T invariants} the discriminant quadratic form $q_T$ only takes values in integers. As a result, we get $\mathrm{div}(v)^2 | v^2$ (consider $q_T(\hat{v})$). The lemma then easily follows.  
\end{proof}

\begin{lemma} \label{typea}
Let $v$ be a primitive vector of $T$ with $v^2=2$ and $\mathrm{div}(v)=1$. Then $v$ is unique up to the action of $O^*(T)$ (and hence also unique up to the action of $O^+(T)$). 
\end{lemma}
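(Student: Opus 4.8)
The plan is to deduce the statement from Eichler's criterion for primitive vectors in a lattice of Witt index at least two. First I would record the invariants of $v$. Since $\mathrm{div}(v)=1$, the element $\hat v = v/\mathrm{div}(v)$ is just the image of $v\in T\subset T^*$ in $A_T=T^*/T$, so $\hat v = 0$; this is exactly case (1) of \lemmaref{typeab}. Thus every primitive vector $v$ as in the statement carries the pair of invariants $(v^2,\hat v)=(2,0)\in\bZ\times A_T$, and the assertion becomes: these invariants determine $v$ up to $O^*(T)$.

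The key input is Eichler's criterion: for an even lattice $L$ containing $U^{\oplus 2}$ as an orthogonal direct summand, two primitive vectors lie in the same orbit under the subgroup generated by Eichler transvections as soon as they have the same square and the same image in $A_L$. By \propositionref{adeT} we have $T\cong U^{\oplus 2}\oplus D_4^{\oplus 3}$, which contains the required pair of hyperbolic planes, so the criterion applies with $L=T$. The transitivity is realized explicitly by transvections $E(e,a)$ with $e$ isotropic in one copy of $U$ and $a\in e^{\perp}$: starting from the model vector $e_1+f_1$ (which satisfies $(e_1+f_1)^2=2$ and $\mathrm{div}(e_1+f_1)=1$, so the class $(2,0)$ is nonempty) one transports any other vector of the same invariants onto it by a product of such transvections.

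The one point that requires care --- and the place where the conclusion earns ``$O^*(T)$'' rather than merely ``$O(T)$'' --- is the verification that the Eichler transvections actually lie in $O^*(T)=O^+(T)\cap\widetilde O(T)$. This is the main obstacle in the sense that it is where the spinor-norm and discriminant bookkeeping must be done: each $E(e,a)$ is unipotent and fixes $A_T$ pointwise, hence lies in $\widetilde O(T)$, and it has spinor norm $+1$, hence lies in $O^+(T)$; both are standard facts about transvections. Consequently the subgroup they generate is contained in $O^*(T)$, so the single Eichler orbit of $(2,0)$ is automatically an $O^*(T)$-orbit. This gives uniqueness up to $O^*(T)$, and since $O^*(T)\subset O^+(T)$ the final clause about $O^+(T)$ follows immediately.
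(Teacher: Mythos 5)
Your proof is correct and follows essentially the same route as the paper: both reduce the statement to Eichler's criterion for the even lattice $T \cong U^{\oplus 2}\oplus D_4^{\oplus 3}$, noting that $\mathrm{div}(v)=1$ forces $\hat v = 0$ in $A_T$, so all such vectors share the invariants $(v^2,\hat v)=(2,0)$ and hence lie in a single $O^*(T)$-orbit. The only difference is that the paper cites the criterion (in its $O^*$ form) from the literature, while you additionally unwind why the Eichler transvections land in $O^*(T)$; that extra bookkeeping is accurate but not a different argument.
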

\begin{proof}
We use Eichler's criterion (cf. \cite[Prop. 3.3]{GHS_modularvar} or \cite[Prop. 1.3.2]{LOG_quartick3}). More precisely, let $u$ and $w$ be nonzero primitive vectors of an even lattice $L$ which contains $U \oplus U$. There exists an element $g \in O^*(L)$ such that $g(u)=w$ if and only if $u^2=w^2$ and $\hat{u} = \hat{w}$. The lemma is a direct application of Eichler's criterion (notice that $\hat{v} = 0$ in $A_T$).
\end{proof}

\begin{lemma} \label{typeb}
Consider primitive vectors $v \in T$ with $v^2=4$ and $\mathrm{div}(v)=2$. There are $36$ equivalence classes of $v$ for the action of $O^*(T)$. Moreover, these vectors form a single $O^+(T)$-orbit.   
\end{lemma}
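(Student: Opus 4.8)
The plan is to reduce the counting of $O^*(T)$-orbits to a statement about discriminant classes via Eichler's criterion, exactly as in \lemmaref{typea}, and then to upgrade this to a single $O^+(T)$-orbit by exploiting the isomorphisms $O^+(T)/O^*(T) \cong O(q_T) \cong W(E_6)$ from \propositionref{Te6} and \lemmaref{qTe6}. First I would record what the hypotheses force: if $v$ is primitive with $v^2=4$ and $\mathrm{div}(v)=2$, then $\hat v = v/2$ is a \emph{nonzero} class in $A_T$ (otherwise $v\in 2T$, contradicting primitivity), and by \lemmaref{typeab} it satisfies $q_T(\hat v)\equiv 1 \pmod{2\bZ}$. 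Since $T\cong U^{\oplus 2}\oplus D_4^{\oplus 3}$ contains $U\oplus U$, Eichler's criterion (as invoked in \lemmaref{typea}) asserts that two such vectors lie in one $O^*(T)$-orbit if and only if they share the same norm and the same image in $A_T$. Thus the $O^*(T)$-orbits inject into the set of classes $\xi\in A_T$ with $q_T(\xi)\equiv 1$.

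Next I would count these classes. Using $q_T\cong v\oplus v\oplus v$ from \lemmaref{T invariants}, where each nonzero class of $A_{D_4}\cong(\bZ/2\bZ)^2$ has $q_{D_4}\equiv 1$, a class has $q_T\equiv 1$ exactly when it has an odd number of nonzero $D_4$-components, giving $\binom{3}{1}\cdot 3+\binom{3}{3}\cdot 3^3 = 9+27 = 36$ classes. A single vector of the required type visibly exists, e.g. $v_0=\alpha_1+\alpha_3$ in the first copy of $D_4$: one checks $v_0^2=4$ and $v_0\cdot T=2\bZ$, so $\mathrm{div}(v_0)=2$ and $\hat v_0=\tfrac12(\alpha_1+\alpha_3)\ne 0$. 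To see that \emph{all} $36$ classes occur and that the orbits fuse, I would pass to the identification $A_T\cong E_6/2E_6$ with $q_T(\bar x)\equiv \tfrac12(x,x)_{E_6}$ from the proof of \lemmaref{qTe6}. Under it the $72$ roots of $E_6$ reduce modulo $2E_6$ to exactly $36$ distinct classes (two roots are congruent iff they are equal or opposite, since $\alpha-\beta\in 2E_6$ forces $(\alpha-\beta)^2\in\{0,8\}$), each of $q_T$-value $1$; comparing cardinalities, these are precisely the $36$ classes above.

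Finally I would conclude. Because $E_6$ is irreducible and simply laced, $W(E_6)$ acts transitively on its roots, hence $O(q_T)\cong W(E_6)$ acts transitively on the $36$ classes. Since $O^+(T)\to O(q_T)$ is surjective with kernel $O^*(T)$ (\propositionref{Te6}), the $O^+(T)$-orbit of $v_0$ meets every one of the $36$ classes. In particular all $36$ classes are realized, so the injection above is a bijection and there are exactly $36$ orbits under $O^*(T)$; and any two of the corresponding $O^*(T)$-orbits are conjugate under $O^+(T)$. Hence all primitive vectors with $v^2=4$ and $\mathrm{div}(v)=2$ constitute a single $O^+(T)$-orbit.

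The hard part is precisely this realization-plus-transitivity step: one must know not merely that $36$ discriminant classes with $q_T\equiv 1$ exist abstractly, but that each arises as the reduction of a genuine primitive norm-$4$, divisibility-$2$ vector, and that $W(E_6)$ permutes them transitively. The cleanest route to both is the bijection with the $36$ root-pairs $\pm\alpha$ of $E_6$ together with transitivity of $W(E_6)$ on roots; alternatively, each class can be realized by hand by taking a minimal lift $\kappa\in(D_4^{\oplus 3})^*$ and correcting its square to $4$ with a suitable element of $2U$, after which Eichler's criterion and the surjectivity of $O^+(T)\to O(q_T)$ close the argument. Everything else is bookkeeping.
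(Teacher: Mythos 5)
Your proof is correct, and it uses the same key ingredients as the paper's: Eichler's criterion, the identification $(A_T,q_T)\cong(E_6/2E_6,q)$ from \lemmaref{qTe6}, transitivity of $W(E_6)$ on the roots of $E_6$, and lifting elements of $O(q_T)$ to $O^+(T)$ via the exact sequence $1\to O^*(T)\to O^+(T)\to O(q_T)\to 1$. The one genuine structural difference lies in how you establish that all $36$ discriminant classes are actually realized by primitive vectors of norm $4$ and divisibility $2$. The paper does this by explicit construction: it lists $36$ representatives ($9$ vectors of the form $\alpha_i+\alpha_j$, $\beta_k+\beta_l$, $\gamma_s+\gamma_t$, and $27$ vectors of the form $2e_1-2f_1+\alpha_i+\alpha_j+\beta_k+\beta_l+\gamma_s+\gamma_t$) and checks their classes are pairwise distinct, and only afterwards invokes $W(E_6)$-transitivity to fuse the $O^*(T)$-orbits into one $O^+(T)$-orbit. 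You instead exhibit a single vector $v_0=\alpha_1+\alpha_3$ and propagate it: lifting the transitive $O(q_T)$-action to $O^+(T)$, each image $g(v_0)$ is again primitive of norm $4$ and divisibility $2$ (isometries preserve all three properties), so every class with $q_T\equiv 1$ is hit, and the orbit count and the fusion statement fall out of one and the same argument. This is a mild but real economy: it collapses the paper's two-step structure into one and dispenses with the explicit list. It also obliges you to justify a fact the paper merely asserts --- that distinct root pairs $\pm\alpha$ of $E_6$ remain distinct modulo $2E_6$ --- which your observation that $\alpha-\beta\in 2E_6$ forces $(\alpha-\beta)^2\in\{0,8\}$ handles cleanly. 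Both arguments are complete; yours is shorter, while the paper's explicit representatives have the side benefit of being convenient later (e.g.\ in the proof of \propositionref{BorcherdsO+}, where one wants, for each orbit, a vector realizable as some $\nu(\delta)$).
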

\begin{proof}
We apply Eichler's criterion to prove the first assertion. The discriminant quadratic form $q_T: A_T \rightarrow \bQ/2\bZ$ vanishes on $28$ elements and equals $1 \pmod{2\bZ}$ for the rest $36$ elements (see the proof of \lemmaref{T invariants}). Notice that $q_T(\hat{v}) \equiv 1 \pmod{2\bZ}$ if $v^2=4$ and $\mathrm{div}(v)=2$. By Eichler's criterion, there are at most $36$ $O^*(T)$-equivalence classes for $v$. Now we construct a representative for every class. Let $\{e_1,f_1,e_2,f_2, \alpha_1,\dots, \alpha_4,\beta_1,\dots,\beta_4,\gamma_1,\dots, \gamma_4\}$ be a basis for $T \cong U^{\oplus 2} \oplus D_4^{\oplus 3}$ (see the beginning of this section). Let $i, j \in \{1,3,4\}$ be two distinct integers. Similarly for $k \neq l$ and $s \neq t$. Consider the $9$ primitive vectors of the form $\alpha_i + \alpha_j$, $\beta_k+\beta_l$ and $\gamma_s + \gamma_t$ respectively. Also consider the following $27$ primitive vectors: $2e_1-2f_1+\alpha_i + \alpha_j+\beta_k+\beta_l+\gamma_s + \gamma_t$. All these $36$ vectors $v$ satisfy $v^2=4$ and $\mathrm{div}(v)=2$. It is also easy to show that the corresponding $\hat{v} \in A_T$ are all different. In fact, these $\hat{v}$ are exactly the $36$ elements for which $q_T$ does not vanish.   

Let us prove the second assertion. Let $v_1$ and $v_2$ be primitive vectors of $T$ with $v_1^2=v_2^2=4$ and $\mathrm{div}(v_1)=\mathrm{div}(v_2)=2$. If $\hat{v}_1 = \hat{v}_2$ then $v_1$ and $v_2$ are $O^*(T)$-equivalent by Eichler's criterion. Suppose that $\hat{v}_1 \neq \hat{v}_2$ in $A_T$. By \lemmaref{qTe6} and its proof, we have $(E_6/2E_6, q)  \cong (A_T, q_T)$ (recall that $q(-) \equiv \frac12(-,-)_{E_6} \pmod{2\bZ}$) which induces $W(E_6) \cong O(q_T)$. Note that the $36$ elements of $E_6/2E_6$ for which $q$ does not equal to $0$ corresponds to the $36$ pairs of opposite roots of $E_6$. Because $W(E_6)$ acts transitively on the set of roots (see for example \cite[\S8.2]{Dolgachev_cag}), these $36$ elements form a single orbit. It follows that  there exists $\bar{g} \in O(q_T)$ such that $\bar{g}(\hat{v}_1) = \hat{v}_2$. Using the natural short exact sequence $1 \rightarrow O^*(T) \rightarrow O^+(T) \rightarrow O(q_T) \rightarrow 1$ we lift $\bar{g}$ to $g \in O^+(T)$. By Eichler's criterion, $g(v_1)$ and $v_2$
 are in the same orbit of $O^*(T)$. In other words, there is $f \in O^*(T)$ such that $f(g(v_1)) = v_2$.
\end{proof}

We introduce some Heegner divisors (cf. \cite[\S3]{Hassett_cubic} and \cite[\S1.3]{LOG_quartick3}) for $\calD_M/O^*(T)$ and $\calD_M/O^+(T)$. Let $\Pi \subset O^+(T)$ be a finite-index subgroup (in our situation, $\Pi$ is either $O^*(T)$ or $O^+(T)$). Let $\pi: \calD_M \rightarrow \calD_M/\Pi$ be the quotient map. For a nonzero $v \in T$, we write $$\calH_{v}(\Pi):= \bigcup_{g \in \Pi} g(v)^{\perp} \cap \calD_M, \,\,\,\,\,\, H_v(\Pi) :=\pi(\calH_v(\Pi)).$$ If $v^2>0$ then $\calH_v(\Pi)$ is not empty and gives a hyperplane arrangement. Also, $H_v(\Pi)$ is a prime divisor (which is also $\bQ$-Cartier as discussed in \cite[\S1.3.1]{LOG_quartick3}) in the locally symmetric variety $\calD_M/\Pi$. We call it the \emph{Heegner divisor} associated to $v$. Both $\calH_{v}(\Pi)$ and $H_{v}(\Pi)$ depend only on the $\Pi$-class of $v$. Following \cite{LOG_quartick3} we say $H_{v}(\Pi)$ is \emph{reflective} if the reflection $r_v$ belongs to $\Pi$.  

Consider primitive vectors $v \in T$ of either Type (1) (i.e. $v^2=2$ and $\mathrm{div}(v)=1$) or Type (2) (i.e. $v^2=4$ and $\mathrm{div}(v)=2$) as in \lemmaref{typeab}. By \lemmaref{typea} there is a single $O^*(T)$-orbit for Type (1) vectors. We denote the corresponding Heegner divisor $H_v(O^*(T))$ in $\calD_M/O^*(T)$ by $H_0$. Note that $H_0$ is reflective. There are $36$ $O^*(T)$-equivalence classes of Type (2) vectors (see \lemmaref{typeb}). Let us denote the corresponding Heegner divisors by $H_1, \dots, H_{36}$.

\begin{definition} \label{HnHt}
\leavevmode
\begin{enumerate}
\item Let $v$ be a vector of Type (1) as in \lemmaref{typeab}. The \emph{nodal Heegner divisor} $H_n$ in $\calD_M/O^+(T)$ is $H_v(O^+(T))$. 
\item Let $v$ be a vector of Type (2) as in \lemmaref{typeab}. The \emph{tangential Heegner divisor} $H_t$ in $\calD_M/O^+(T)$ is $H_v(O^+(T))$.
\end{enumerate}
(Thanks to \lemmaref{typea} and \lemmaref{typeb} the definitions make sense. Moreover, both $H_n$ and $H_t$ are reflective Heegner divisors.)
\end{definition}

\begin{remark}
Later we shall see that a generic point of $H_n$ (resp. $H_t$) corresponds to the a cubic pair $(X,H)$ with $X$ nodal (resp. $H$ simply tangent to $X$).
\end{remark}

\begin{remark}
The work of Borcherds, Bruinier (see \cite{Bruinier_picrk} and references therein) and the refinement given in \cite{BLMM_nl} allows us to compute the rank of the Picard group of $\calD_M/O^*(T)$ (more generally, the Picard rank of certain modular varieties of type IV). More specifically, let $S_{k, T}$ denote the space of (vector-valued) cusp forms of weight $k$ with values in $T$. Borcherds has defined a homomorphism $S_{8,T} \rightarrow \Pic(\calD_M/O^*(T)) \otimes {\bC}/\langle \lambda(O^*(T)) \rangle$ (where $\lambda(O^*(T))$ denotes the Hodge bundle, see the discussions below). Because $T$ contains two copies of $U$, the homomorphism is injective (cf. \cite{Bruinier_picrk}). By \cite{BLMM_nl} this is in fact an isomorphism. A formula for computing the dimension of $S_{8, T}$ is given by Bruinier \cite{Bruinier_picrk} (see also \cite[\S3.2]{LOG_quartick3}). Using \lemmaref{T invariants} we get $$\dim(S_{8,T}) = 64 + \frac{64 \cdot 8}{12} - 18-\frac{65}{3}-18-28 =21$$ and hence the Picard rank of $\calD_M/O^*(T)$ is $22$.
\end{remark}

What will be important for us is a relation between the Hodge bundle on $\calD_M/O^+(T)$ (resp. the Hodge bundle on $\calD_M/O^*(T)$) and the Heegner divisors $H_n$ and $H_t$ (resp. $H_0, H_1, \dots, H_{36}$) in $\Pic(\calD_M/O^+(T))_{\bQ}$ (resp. in $\Pic(\calD_M/O^*(T))_{\bQ}$). We call this type of relation a \emph{Borcherds' relation}. Let $\Pi$ be either $O^*(T)$ or $O^+(T)$. Recall that the \emph{Hodge bundle} $\lambda(\Pi)$ is the fractional line bundle on $\calD_M/\Pi$ defined as the quotient of $\lambda = \calO_{\calD_M}(-1)$ (the restriction of the tautological line bundle on $\bP(T \otimes \bC)$ to $\calD_M$) by $\Pi$. By abuse of notation, we also denote the divisor class of the Hodge bundle by $\lambda(\Pi)$ (which is $\bQ$-Cartier). The Baily-Borel compactification $(\calD_M/\Pi)^* = \Proj \bigoplus_{m \geq 0}H^0(\calD_M, \lambda^{\otimes m})^{\Pi}$. The line bundle $\lambda(\Pi)$ extends to an ample fractional line bundle $\lambda^*(\Pi)$ on $(\calD_M/\Pi)^*$ and the sections of $m\lambda^*(\Pi)$ are exactly the weight $m$ $\Pi$-automorphic forms. 

We briefly describe the strategy for computing Borcherds' relations which has been used for example in \cite{CML_ij}, \cite{CMJL_quartic} and \cite{LOG_quartick3} (see also \cite{Kondo_kodaira2}, \cite{GHS_kodaira} and \cite{TV-A_kodaira}). The idea is to choose a primitive embedding of $T$ into the even unimodular lattice $II_{26,2} (\cong U^{\oplus 2} \oplus E_8^{\oplus 3}$) of signature $(26,2)$ and study the quasi-pullback $\Phi_T$ (see \cite[\S 3.3.1]{LOG_quartick3}) of Borcherds' automorphic form $\Phi_{12}$ defined in \cite{Borcherds_autoform}. Let $T^{\perp} = T_{II_{26,2}}^{\perp}$ be the orthogonal complement of $T$ in $II_{26,2}$. For a lattice $L$ let us denote the set of roots by $R(L)$. $\Phi_T$ is an automorphic form on $\calD_M$ for $O^*(T)$ of weight $12+|R(T^{\perp})|/2$. The divisor of $\Phi_T$ is supported on the union of the hyperplanes $\delta^{\perp} \cap \calD_M$ where $\delta \in R(II_{26,2}) \backslash R(T^{\perp})$: 
\begin{equation} \label{pre Brocherds}
\mathrm{div}(\Phi_T) = \sum_{\pm \delta \in R(II_{26,2}) \backslash R(T^{\perp})} (\delta^{\perp} \cap \calD_M).
\end{equation} 
Note that $\delta^{\perp} \cap \calD_M \neq \emptyset$ if and only if the lattice $\langle \delta, T^{\perp}\rangle$ spanned by $\delta$ and $T^{\perp}$ is positive definite. Given such a $\delta \in R(II_{26,2}) \backslash R(T^{\perp})$, we let $\nu(\delta)$ be a generator of $(\bQ\delta \oplus \bQ T^{\perp}) \cap T$ (which is unique up to a choice of the sign). As discussed in \cite[Rmk 3.3.3]{LOG_quartick3}, the right hand side of Equation (\ref{pre Brocherds}) is a finite sum of the hyperplane arrangements $\calH_{\nu(\delta)}(O^*(T))$. The coefficient of $\calH_{\nu(\delta)}(O^*(T))$ is $\frac12(|R(\mathrm{Sat}\langle \delta, T^{\perp} \rangle)|-|R(T^{\perp})|)$ where $\mathrm{Sat}$ means taking the saturation in $II_{26,2}$. Equation (\ref{pre Brocherds}) descends to a relation between the Hodge bundle $\lambda(O^*(T))$ on $\calD_M/O^*(T)$ and certain Heegner divisors. By pushing forward via the natural morphism $\calD_M/O^*(T) \rightarrow \calD_M/O^+(T)$, we obtain a Borcherds' relation on $\calD_M/O^+(T)$.  

The strategy is carried out as follows (using the above notations). We first choose a primitive embedding of $T \cong U^{\oplus 2} \oplus D_4^{\oplus 3}$ into $II_{26,2} \cong U^{\oplus 2} \oplus E_8^{\oplus 3}$. 
\begin{lemma} \label{embedTII}
There exists a primitive embedding $T \hookrightarrow II_{26,2}$ with orthogonal complement $T^{\perp}$ isomorphic to $D_4^{\oplus 3}$. %Conversely, any primitive embedding of $D_4^{\oplus 3}$ into $II_{26,2}$ has orthogonal complement isomorphic to $T$.
\end{lemma}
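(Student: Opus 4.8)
The plan is to construct the embedding one orthogonal summand at a time, the key input being the classical fact that $D_4$ sits primitively inside $E_8$ with orthogonal complement again a copy of $D_4$. First I would recall why this holds: the root system $E_8$ contains a sub-root-system of type $D_4 + D_4$ (for instance via $D_4 \perp D_4 \subset D_8 \subset E_8$, or directly from the extended Dynkin diagram, cf.\ \cite{Bourbaki_lie4-6}), so two orthogonal copies of $D_4$ generate a finite-index sublattice $D_4 \oplus D_4 \subset E_8$. Comparing discriminants, $\mathrm{discr}(D_4 \oplus D_4) = 16$ while $E_8$ is unimodular, so this sublattice has index $4$. Since the two summands have ranks adding up to $8 = \rank(E_8)$, each is the orthogonal complement of the other in $E_8$; being an orthogonal complement, each copy of $D_4$ is automatically saturated, hence primitive. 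This yields a primitive embedding $D_4 \hookrightarrow E_8$ with $(D_4)^{\perp}_{E_8} \cong D_4$.

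Next I would take the orthogonal direct sum of three copies of this embedding to obtain a primitive embedding $D_4^{\oplus 3} \hookrightarrow E_8^{\oplus 3}$ whose orthogonal complement is $D_4^{\oplus 3}$. Finally, using the identity on the hyperbolic part, I would assemble
$$T \cong U^{\oplus 2} \oplus D_4^{\oplus 3} \hookrightarrow U^{\oplus 2} \oplus E_8^{\oplus 3} \cong II_{26,2}.$$
The orthogonal complement of the image is $0 \oplus D_4^{\oplus 3} \cong D_4^{\oplus 3}$, as required. Primitivity of the whole embedding is then immediate: $II_{26,2}/T \cong (E_8/D_4)^{\oplus 3}$ is torsion-free because each quotient $E_8/D_4$ is torsion-free, the latter being exactly the primitivity of $D_4$ in $E_8$ established above.

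The one step that is not purely formal is the assertion that $D_4$ embeds in $E_8$ with complement $D_4$; this is where I expect the only real content to lie. I would either cite it as a standard fact about root systems (Borel--de Siebenthal / the $D_4 + D_4$ sub-root-system of $E_8$), or verify it by exhibiting explicit orthogonal root systems of type $D_4$ in a coordinate model of $E_8$ together with the discriminant count above. As a consistency check on the discriminant forms one may note that the glue producing $E_8$ corresponds to an isotropic subgroup of $(A_{D_4} \oplus A_{D_4},\, q_{D_4} \oplus q_{D_4})$ of order $4$, i.e.\ the graph of an isometry $q_{D_4} \to -q_{D_4}$; such an isometry exists because $-q_{D_4} \cong q_{D_4}$ (all three nonzero elements of $A_{D_4}$ take the value $1 \in \bQ/2\bZ$), matching \lemmaref{T invariants}. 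Everything else reduces to the elementary facts that orthogonal complements are primitive and that primitivity is equivalent to torsion-freeness of the quotient.
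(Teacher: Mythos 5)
Your route is the same as the paper's --- reduce to a primitive embedding $D_4 \hookrightarrow E_8$ with orthogonal complement $D_4$, obtained from the sub-root-system $D_4 + D_4 \subset D_8 \subset E_8$, then assemble with the identity on $U^{\oplus 2}$ --- and the final assembly step is fine. But your justification of the one substantive point, primitivity of each $D_4$ factor, has a genuine gap. The inference ``the two summands have ranks adding up to $8 = \rank(E_8)$, hence each is the orthogonal complement of the other'' is false as a general principle, and here it is circular: for a full-rank orthogonal pair $L_1 \oplus L_2 \subset E_8$ one only gets $L_1^{\perp} = \Sat(L_2)$, so the identity $L_1^{\perp} = L_2$ is \emph{equivalent to} the saturation you are trying to deduce from it. A counterexample inside $E_8$ itself: the full sub-root-lattice $A_1^{\oplus 8} \subset E_8$ (eight mutually orthogonal roots), split as $L_1 \oplus L_2$ with $L_1 \cong A_1$ and $L_2 \cong A_1^{\oplus 7}$; the ranks add up to $8$, yet $L_1^{\perp} \cong E_7$ contains $L_2$ with index $8$, so $L_2$ is neither the orthogonal complement of $L_1$ nor saturated. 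Thus orthogonal, root-generated decompositions of full rank do not automatically consist of mutually complementary (equivalently, primitive) pieces; this must be argued for $D_4 \oplus D_4$ specifically.

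The repair is exactly the observation you demote to a ``consistency check,'' so promote it to the main argument: $q_{D_4}$ takes the value $1 \in \bQ/2\bZ$ on all three nonzero elements of $A_{D_4} \cong (\bZ/2\bZ)^2$, so $(A_{D_4}, q_{D_4})$ has no nonzero isotropic elements, hence $D_4$ admits no proper even overlattice (cf. \cite[Prop. 1.4.1]{Nikulin_lattice}); therefore $\Sat(L_i) = L_i$ for each $D_4$ factor inside $E_8$, which is the desired primitivity. Equivalently, the order-$4$ isotropic glue group $E_8/(D_4 \oplus D_4) \subset A_{D_4} \oplus A_{D_4}$ must meet each factor trivially, i.e., it is the graph of an isometry, and that is precisely the condition that both summands be primitively embedded. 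For comparison, the paper sidesteps the issue entirely: it takes the $D_4$ spanned by the four simple roots $\delta_2, \delta_3, \delta_4, \delta_5$ of $E_8$ --- a subset of a $\bZ$-basis of the root lattice, so saturation is automatic --- and then exhibits explicit root generators of the orthogonal complement, identifying it as $D_4$ (alternatively via the genus argument). Either fix is short; as written, however, your central step rests on a false general claim.
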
 
\begin{proof}
Because $T$ is isomorphic to $U^{\oplus 2} \oplus D_4^{\oplus 3}$ and $II_{26,2}$ is isomorphic to $U^{\oplus 2} \oplus E_8^{\oplus 3}$, it suffices to construct a primitive embedding of $D_4$ into $E_8$. Using Borel-de Siebenthal procedure (see for example \cite[\S 8.2]{Dolgachev_cag}) it is easy to obtain a primitive embedding $D_4 \hookrightarrow D_8 \hookrightarrow E_8$. More explicitly, let $\delta_1, \dots, \delta_8$ be a set of simple roots of $E_8$ (we label them as in \cite[\S VI.4.10]{Bourbaki_lie4-6}). The sublattice generated by $\delta_2$, $\delta_3$, $\delta_4$, $\delta_5$ is isomorphic to $D_4$. The orthogonal complement $\langle \delta_2, \delta_3,\delta_4,\delta_5 \rangle_{E_8}^{\perp}$ is generated by $\delta_7$, $\delta_8$, $\tilde{\delta}=-2\delta_1-3\delta_2-4\delta_3-6\delta_4-5\delta_5-4\delta_6-3\delta_7-2\delta_8$ (the highest root for $E_8$) and $\delta' = 2\delta_1+2\delta_2+3\delta_3+4\delta_4+3\delta_5+2\delta_6+\delta_7$ (coming from the highest root for $D_8$) which is isomorphic to $D_4$.  (For any primitive embedding $D_4 \hookrightarrow E_8$, the orthogonal complement $(D_4)_{E_8}^{\perp}$ is in the genus of $D_4$ and hence must be isomorphic to $D_4$.) The claim then follows. %The second claim is a result of \cite[Prop. 1.6.1, Thm. 1.13.2]{Nikulin_lattice}.
\end{proof}

Next we show that the Heegner divisors that show up in the expression of $\mathrm{div}(\Phi_T)$ (more precisely, its descent to $\calD_M/O^*(T)$) are exactly the Heegner divisors $H_0, H_1, \dots, H_{36}$ we introduce before. 

\begin{lemma} \label{classifynudelta}
Choose an embedding of $T$ into the unimodular lattice $II_{26,2}$ as in \lemmaref{embedTII}. Suppose $\delta \in R(II_{26,2}) \backslash R(T^{\perp})$ is such a root that $\langle \delta, T^{\perp} \rangle$ is positive definite (in other words, $\delta^{\perp} \cap \calD_M \neq \emptyset$). Let $\nu(\delta)$ be a generator of $(\bQ\delta \oplus \bQ T^{\perp}) \cap T$. Then one of the following holds (compare \lemmaref{typeab}):
\begin{enumerate}
\item $\nu(\delta)^2=2$ and $\mathrm{div}(\nu(\delta))=1$;
\item $\nu(\delta)^2=4$ and $\mathrm{div}(\nu(\delta))=2$.
\end{enumerate}
\end{lemma}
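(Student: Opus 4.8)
The plan is to analyze the orthogonal projection of $\delta$ onto $T\otimes\bQ$. Since $T$ is primitively embedded in the unimodular lattice $II_{26,2}$, over $\bQ$ we have an orthogonal decomposition $II_{26,2}\otimes\bQ=(T\otimes\bQ)\perp(T^{\perp}\otimes\bQ)$, so I can write $\delta=\delta_T+\delta_\perp$ with $\delta_T\in T^*$ and $\delta_\perp\in (T^{\perp})^*$ (each component pairs integrally with $T$, resp. $T^{\perp}$, since $\delta$ does). First I would observe that $(\bQ\delta\oplus\bQ T^{\perp})\cap(T\otimes\bQ)=\bQ\delta_T$, so that $\nu(\delta)$ is precisely the primitive vector of $T$ on the ray $\bQ\delta_T$. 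Here $\delta_T\neq0$, for otherwise $\delta\in II_{26,2}\cap(T^{\perp}\otimes\bQ)=T^{\perp}$ (the complement is saturated), and a norm-$2$ vector there would lie in $R(T^{\perp})$, contrary to hypothesis. The assumption that $\langle\delta,T^{\perp}\rangle$ is positive definite then forces $\delta_T^2>0$, and from $\delta^2=2=\delta_T^2+\delta_\perp^2$ we obtain $0<\delta_T^2\le 2$ and $0\le\delta_\perp^2<2$.

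The heart of the argument is to pin down $\delta_\perp^2$ using $T^{\perp}\cong D_4^{\oplus3}$. I would use that $(T^{\perp})^*\cong (D_4^*)^{\oplus3}$, together with the standard fact that in $D_4^*$ the trivial coset (namely $D_4$) has minimal nonzero norm $2$, while each of the three nontrivial cosets consists of vectors of odd integral norm with minimum $1$. Writing $\delta_\perp=(w_1,w_2,w_3)$, the bound $\sum_i w_i^2=\delta_\perp^2<2$ rules out any nonzero vector of $D_4$ and permits at most one $w_i$ of norm $1$. Hence either $\delta_\perp=0$ or exactly one component has norm $1$, so that $\delta_\perp^2\in\{0,1\}$.

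In the case $\delta_\perp^2=0$ one has $\delta=\delta_T\in II_{26,2}\cap(T\otimes\bQ)=T$, so $\delta$ is a root of $T$ and $\nu(\delta)=\pm\delta$ has $\nu(\delta)^2=2$. Since $A_T$ is $2$-elementary with integer-valued $q_T$ (\lemmaref{T invariants}), the relation $\mathrm{div}(\nu(\delta))^2\mid\nu(\delta)^2=2$ forces $\mathrm{div}(\nu(\delta))=1$, which is Type (1). In the case $\delta_\perp^2=1$ we get $\delta_T^2=1$; moreover $\delta_T\notin T$, since otherwise $\delta_\perp=\delta-\delta_T$ would be a norm-$1$ vector in the even lattice $T^{\perp}$, which is absurd. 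As $A_T$ is $2$-elementary, $\overline{\delta_T}$ has order $2$, so $2\delta_T\in T$ while $\delta_T\notin T$. Writing $\delta_T=\lambda\,\nu(\delta)$ with $\nu(\delta)$ primitive, the conditions $2\lambda\in\bZ$, $\lambda\notin\bZ$, and $\nu(\delta)^2=\lambda^{-2}$ a positive even integer force $\lambda=\pm\tfrac12$, whence $\nu(\delta)=2\delta_T$ and $\nu(\delta)^2=4$. Finally $\tfrac12\nu(\delta)=\delta_T\in T^*$ gives $\nu(\delta)\cdot T\subseteq 2\bZ$, so $2\mid\mathrm{div}(\nu(\delta))$; together with $\mathrm{div}(\nu(\delta))\mid\exp(A_T)=2$ this yields $\mathrm{div}(\nu(\delta))=2$, which is Type (2).

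I expect the main obstacle to be the bookkeeping in the second case: correctly identifying $\nu(\delta)=2\delta_T$ and computing its divisibility hinges on combining the discriminant data of $T$ (that $A_T$ is $2$-elementary with $q_T$ integer-valued, from \lemmaref{T invariants}) with the fact that $\nu(\delta)$ is even-normed and primitive, and one must be careful not to conflate $\delta_T\in T^*$ with its primitive representative in $T$. The norm classification in $D_4^*$ yielding the dichotomy $\delta_\perp^2\in\{0,1\}$ is the other delicate point, though it is a familiar property of the $D_4$ lattice.
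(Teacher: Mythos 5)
Your proof is correct, and while it shares the paper's basic move---splitting $\delta$ into its components along $T\otimes\bQ$ and $T^{\perp}\otimes\bQ$---the way you obtain the key dichotomy is genuinely different. The paper clears denominators: it takes the minimal $m$ with $m\delta \in T\oplus T^{\perp}$, bounds $m\in\{1,2,4\}$ using the index of $D_4\oplus D_4$ in $E_8$, and then enumerates the solutions of $v^2+w^2=2m^2$, discarding the spurious possibilities ($v^2\in\{2,6\}$ when $m=2$; everything with $m=4$) by appealing to the integrality of $q_T$ and the absence of $4$-torsion in $A_T\cong(\bZ/2\bZ)^6$, the last case requiring a separate argument that the $T^{\perp}$-component $w$ is primitive. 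You instead stay in the dual lattices, where the coset-norm structure of $(T^{\perp})^*\cong(D_4^*)^{\oplus 3}$---every vector has non-negative integral norm, even on the trivial coset and odd ($\geq 1$) on the three nontrivial cosets---forces $\delta_{\perp}^2\in\{0,1\}$ in one stroke, and hence $\delta_T^2\in\{1,2\}$; from there $2$-elementarity of $A_T$ and evenness of $T$ pin down $\nu(\delta)$ and its divisibility (your identification $\nu(\delta)=\pm 2\delta_T$ in the second case, and the fact that $\mathrm{div}$ of a primitive vector equals the order of its class in $A_T$, are both handled correctly). Your route is shorter and avoids the paper's cases (iii)--(iv) entirely, at the cost of invoking the specific (though standard) description of norms in $D_4^*$; the paper's denominator-clearing argument depends less on the precise shape of $T^{\perp}$ and follows more closely the template of \cite[Prop.~3.3.10]{LOG_quartick3} that it cites. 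Both proofs settle the divisibility statements by the same mechanism, namely the integer-valuedness of $q_T$ established in the proof of \lemmaref{T invariants}.
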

\begin{proof}
The proof is analogous to that of \cite{LOG_quartick3} Proposition 3.3.10. Let $m$ be the minimal positive integer such that $m \delta \in T \oplus T^{\perp}$. Write $$m\delta = v+w$$ where $0 \neq v \in T$ and $w \in T^{\perp}$. Because the embedding $T \hookrightarrow II_{26,2}$ is defined for every piece of $D_4$ contained in $T \cong U^{\oplus 2} \oplus D_4^{\oplus 3}$, we have $m \in \{1,2,4\}$ (N.B. $[E_8: D_4 \oplus D_4] =4$). Note also that $v \in \langle \nu(\delta) \rangle$ and $\nu(\delta) = \pm v$ if and only if $v$ is primitive. Because $\langle \delta, T^{\perp} \rangle$ is positive definite, we get $v^2 > 0$ and $w^2 \geq 0$. If $w^2=0$, then $\nu(\delta) = \pm \delta$ and Case (1) holds. Since $2m^2 = v^2 + w^2$, one of the following holds:
\begin{enumerate}
\item[(i)] $m=1$, $v^2=2$, and $\nu(\delta) = \pm v$; 
\item[(ii)] $m=2$, $v^2 \in \{2,4,6\}$, $\nu(\delta) = \pm v$, and $\mathrm{div}(v)$ is either $2$ or $4$ in $T$;
\item[(iii)] $m=4$, $v^2 \in \{2,4,6, \dots, 30\}$, $\nu(\delta) = \pm v$, and $\mathrm{div}(v)$ equals $4$ in $T$;
\item[(iv)] $m=4$, $v^2 \in \{8, 16, 18, 24\}$, and $v$ is not primitive.
\end{enumerate}  
Let us do a case by case analysis.
\begin{enumerate}
\item[(i)] Suppose that (i) holds, then we have Case (1). 
\item[(ii)] Suppose that (ii) holds. Because the discriminant quadratic form $q_T$ takes values in integers, the only possibility is $v^2=4$, $\nu(\delta) = \pm v$, and $\mathrm{div}(v)$ equals $2$ in $T$. This is Case (2).
\item[(iii)] We claim (iii) can not happen. It contradicts the fact that $A_T \cong (\bZ/2\bZ)^6$. 
\item[(iv)] We claim (iv) can not happen. Suppose (iv) holds. Let us observe that $w$ must be primitive. When $v^2=18$, this is clear. When $v^2 \in \{8,16,24\}$, $w^2 \in \{8,16, 24\}$. If $w$ is not primitive, then one has $v=2u$ and $w=2z$ for $u \in T$ and $z \in T^{\perp}$. Thus, $2\nu(\delta) = u+z$ which contradicts our assumption that $m$ is minimal. But then $\mathrm{div}(w)=4$ in $T^{\perp}$ which is impossible ($A_{T^{\perp}} \cong A_T \cong (\bZ/2\bZ)^6$ does not have 4-torsion elements).      
\end{enumerate}  
\end{proof}

Now let us compute Borcherds' relations. 
\begin{proposition} \label{BorcherdsO+}
In the $\bQ$-Picard group $\Pic(\calD_M/O^+(T))_{\bQ}$ we have $$\lambda(O^+(T)) \sim H_n + 2 H_t$$ where $D \sim D'$ means $D = c D'$ for some $c \in \bQ^*$.
\end{proposition}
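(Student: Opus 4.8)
The plan is to realize the relation as the descent of the divisor of a Borcherds product, following the strategy already set up in the preceding lemmas. First I would use the primitive embedding $T \hookrightarrow II_{26,2}$ with $T^{\perp} \cong D_4^{\oplus 3}$ from \lemmaref{embedTII} to form the quasi-pullback $\Phi_T$ of Borcherds' form $\Phi_{12}$. Since $|R(D_4)| = 24$ we have $|R(T^{\perp})| = 72$, so $\Phi_T$ is automorphic for $O^*(T)$ of weight $12 + |R(T^{\perp})|/2 = 48$; equivalently its divisor on $\calD_M/O^*(T)$ represents $48\,\lambda(O^*(T))$. By Equation (\ref{pre Brocherds}) together with \lemmaref{classifynudelta}, the hyperplanes occurring in $\mathrm{div}(\Phi_T)$ are exactly those orthogonal to the Type (1) and Type (2) vectors of \lemmaref{typeab}.

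The computational heart is to evaluate, for each type, the multiplicity $\tfrac12(|R(\mathrm{Sat}\langle \delta, T^{\perp}\rangle)| - |R(T^{\perp})|)$. For a Type (1) vector ($v^2 = 2$, $\mathrm{div}(v)=1$) one has $w=0$, so $\mathrm{Sat}\langle\delta, T^{\perp}\rangle \cong A_1 \oplus D_4^{\oplus 3}$ with $74$ roots, giving multiplicity $1$. For a Type (2) vector ($v^2 = 4$, $\mathrm{div}(v)=2$) I would write $2\delta = v + w$ with $w \in T^{\perp}$, $w^2 = 4$, and observe that an integral $w$ whose class $\hat{w}$ is the glue-partner of $\hat{v}$ exists only when $\hat{v}$ (equivalently $\hat{w}$) is supported on a single $D_4$-summand, i.e. for the $9$ classes represented by $\alpha_i+\alpha_j$, $\beta_k+\beta_l$, $\gamma_s+\gamma_t$; the remaining $27$ classes would require a norm-$4$ vector spread over three $D_4$'s, which is impossible. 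Thus only $9$ of the orbits $H_1,\dots,H_{36}$ are seen by $\Phi_T$, and for these the saturation gains $16$ extra roots (those with $T$-projection $\pm v/2$ and a norm-$1$ glue component), so $|R(\mathrm{Sat})| = 88$ and the multiplicity is $8$.

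The delicate step is the passage to the quotients. The order of $\Phi_T$ along a divisor that is \emph{reflective for $O^*(T)$} is halved upon descent, since $\calD_M \to \calD_M/O^*(T)$ ramifies of index $2$ there. The Type (1) divisor $H_0$ is reflective for $O^*(T)$ (as $\hat{v}=0$, the reflection $r_v$ lies in $\widetilde{O}(T)\cap O^+(T)=O^*(T)$), hence descends with coefficient $\tfrac12$; the Type (2) divisors are \emph{not} reflective for $O^*(T)$ (since $q_T(\hat{v})\equiv 1$, $r_v$ acts nontrivially on $A_T$, so $r_v \in O^+(T)\setminus O^*(T)$) and keep coefficient $8$. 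This gives on $\calD_M/O^*(T)$ the relation $48\,\lambda(O^*(T)) \sim \tfrac12 H_0 + 8\sum_{i=1}^{9} H_i$ (relabelling the realized Type (2) classes).

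Finally I would push forward along $p\colon \calD_M/O^*(T)\to \calD_M/O^+(T)$, of degree $N=|W(E_6)|$. Here $p^*\lambda(O^+(T))=\lambda(O^*(T))$; moreover $p^*H_n = H_0$ (so $p_*H_0 = N H_n$), because $p$ is unramified along $H_0$ (the reflection already lies in $O^*(T)$), while $p^*H_t = 2\sum_{i=1}^{36} H_i$ (so $p_*H_i = \tfrac{N}{72}H_t$), because $p$ is ramified of index $2$ along each $H_i$ (the reflection enters only in $O^+(T)$). Applying $p_*$ turns $\tfrac12 H_0$ into $\tfrac N2 H_n$ and $8\sum_{i=1}^{9}H_i$ into $8\cdot 9\cdot\tfrac{N}{72}H_t = N H_t$, yielding $48N\,\lambda(O^+(T)) \sim \tfrac N2 H_n + N H_t$, that is $\lambda(O^+(T)) \sim H_n + 2H_t$. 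The hard part will be the root-counting of the saturated lattices — in particular isolating which Type (2) classes are realized by roots of $II_{26,2}$ — together with the careful bookkeeping of the reflective factor $\tfrac12$, for it is precisely the asymmetry between $H_0$ being reflective for $O^*(T)$ and $H_t$ becoming reflective only for $O^+(T)$ that produces the coefficient $2$.
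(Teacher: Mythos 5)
Your proof is correct, and it follows the paper's global strategy: the quasi-pullback $\Phi_T$ of $\Phi_{12}$ through the embedding of \lemmaref{embedTII}, the multiplicity formula $\tfrac12\bigl(|R(\mathrm{Sat}\langle\delta,T^{\perp}\rangle)|-|R(T^{\perp})|\bigr)$, descent to $\calD_M/O^*(T)$, and pushforward to $\calD_M/O^+(T)$. Where you genuinely depart from the paper is at the computational heart, and your version is the correct one. The paper asserts that \emph{every} one of the $36$ $O^*(T)$-orbits of Type (2) vectors is realized as $\nu(\delta)$ for some root $\delta$, writes $48\lambda(O^*(T))=\tfrac12 H_0+8(H_1+\cdots+H_{36})$, and then passes to $\calD_M/O^+(T)$ by ``dividing by the ramification order $8$'' along $H_t$, the $8$ being justified by a geometric local monodromy group. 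You instead show that a realizing root satisfies $2\delta=v+w$ with $w\in T^{\perp}=D_4^{\oplus3}$, $w^2=4$, $\mathrm{div}(w)=2$, and glue class matching $\hat v$; since a vector of $D_4$ with divisibility $2$ and nonzero class in $A_{D_4}$ has norm at least $4$, a class supported on all three $D_4$-blocks forces $w^2\ge 12$, so exactly the $9$ single-block orbits occur (those of $\alpha_i+\alpha_j$, $\beta_k+\beta_l$, $\gamma_s+\gamma_t$ in \lemmaref{typeb}), each with multiplicity $8$, and the $27$ orbits of $2e_1-2f_1+\cdots$ do not occur at all. This is right, and it is not a cosmetic difference: the gluing in \lemmaref{embedTII} is block-diagonal (indeed any gluing sends the $36$ non-isotropic classes of $A_T$ bijectively to those of $A_{T^{\perp}}$, of which only $9$ are single-block), so the paper's intermediate relation is genuinely false as stated.

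The second place where you are more careful is the descent bookkeeping, and here the comparison is illuminating. The ramification index of $p\colon\calD_M/O^*(T)\to\calD_M/O^+(T)$ along each $H_i$ is $2$ (because $r_v\in O^+(T)\setminus O^*(T)$ for Type (2) vectors), and $1$ along $H_0$ (the reflection already lies in $O^*(T)$); with this standard bookkeeping your relation $48\lambda(O^*(T))\sim\tfrac12 H_0+8\sum_{k=1}^{9}H_{i_k}$ pushes forward, via $p_*H_0=NH_n$, $p_*H_{i}=\tfrac{N}{72}H_t$ and $p_*\lambda(O^*(T))=N\lambda(O^+(T))$ with $N=|W(E_6)|$, to $48N\lambda(O^+(T))\sim\tfrac{N}{2}H_n+NH_t$, i.e.\ exactly $\lambda(O^+(T))\sim H_n+2H_t$. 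By contrast, the paper's intermediate relation with coefficient $8$ on all $36$ divisors would, under the same (correct) bookkeeping, push forward to $\lambda(O^+(T))\sim H_n+8H_t$; in the paper the overcount ($36$ orbits instead of $9$) and the division by $8$ instead of $2$ are compensating inaccuracies that happen to land on the right answer. A reassuring consistency check for your count: averaging your relation over $O^+(T)/O^*(T)$ replaces $8\sum_{k=1}^{9}H_{i_k}$ by the equal divisor class $2\sum_{i=1}^{36}H_i$, which pushes forward to the same $NH_t$. So your argument not only proves the proposition but also repairs the paper's own proof of it.
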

\begin{proof}
Let us embed $T$ into $II_{26,2}$ as in \lemmaref{embedTII}. Let $\Phi_T$ be the quasi-pullback of Borcherds' automorphic form $\Phi_{12}$ (see \cite[\S 3.3.1]{LOG_quartick3}). Then $\Phi_T$ is automorphic form on $\calD_M$ for $O^*(T)$. The weight of $\Phi_T$ is $12+|R(T^{\perp})|/2 = 12+|R(D_4^{\oplus 3})|/2 = 12+72/2=48$ (recall that $|R(D_k)|=2k(k-1)$). Set $\delta \in R(II_{26,2}) \backslash R(T^{\perp})$ to be a root with $\langle \delta, T^{\perp} \rangle$ positive definite and denote a generator of $(\bQ\delta \oplus \bQ T^{\perp}) \cap T$ by $\nu(\delta)$. Then $\Phi_T$ vanishes on a union of hyperplane arrangements $\calH_{\nu(\delta)}(O^*(T))$ (cf. \cite[Rmk. 3.3.3]{LOG_quartick3}). By \lemmaref{classifynudelta}, the vectors $\nu(\delta) \in T$ defining these hyperplane arrangements are exactly the vectors we consider in \lemmaref{typeab}. Type (1) vectors in \lemmaref{typeab} form a single $O^*(T)$-orbit and there are $36$ orbits for Type (2) vectors. For every orbit it is easy to find a vector which can be realized as $\nu(\delta)$ for some $\delta \in R(II_{26,2}) \backslash R(T^{\perp})$. The vanishing order of $\Phi_T$ along $\calH_{\nu(\delta)}(O^*(T))$ equals $\frac12(|R(\mathrm{Sat}\langle \delta, T^{\perp} \rangle)|-|R(T^{\perp})|)$. If $\nu(\delta)$ is of Type (1), then the lattice $\langle \delta, T^{\perp} \rangle$ is saturated and isomorphic to $A_1 \oplus D_4^{\oplus 3}$. If $\nu(\delta)$ is of Type (2), then the saturation of the lattice $\langle \delta, T^{\perp} \rangle$ is $D_5 \oplus D_4^{\oplus 2}$. Note also that $H_0$ is reflective. Putting everything together, we get an expression (in $\Pic(\calD_M/O^*(T))_{\bQ}$) for the descent of $\mathrm{div}(\Phi_T)$ from $\calD_M$ to $\calD_M/O^*(T)$: $48 \lambda(O^*(T)) = \frac12 H_0 + 8(H_1 + \dots + H_{36})$. Now let us push forward this relation via $\calD_M/O^*(T) \rightarrow \calD_M/O^+(T)$. This means one needs to divide the Borcherds' relation by the ramification order for the Heegner divisors $H_t$ which is $8$ (by \propositionref{geoHeegner} a generic point of $H_t$ corresponds to a cubic fourfold with a pair of conjugate $A_1$ singularities and hence the local monodromy group has order $8$: the product of Weyl groups for the two $A_1$ singularities and a involution interchanging the two singularities). Thus we obtain $\lambda(O^+(T)) \sim H_n + 2 H_t$.  
\end{proof}

\section{Extending the period map}
We compactify the period map $\calP_0: \calM_0 \rightarrow \calD_M/O^+(T)$ defined in Section \ref{pairs torelli}. Specifically, we show that a certain GIT compactification of the moduli $\calM_0$ of the pairs $(X,H)$ is isomorphic to the Baily-Borel compactification of the locally symmetric domain $\calD_M/O^+(T)$. 

The natural parameter space for cubic pairs $(X,H)$ consisting of a cubic threefold $X$ and a hyperplane $H$ is $$\bfP := \bP H^0(\bP^4, \calO(3)) \times \bP H^0(\bP^4, \calO(1)) \cong \bP^{34} \times \bP^{4}$$ on which the group $G := \SL(5, \bC)$ acts diagonally. Thus we consider the GIT quotient $$\bfP \git_{\calL} G = \Proj \bigoplus_{m \geq 0}H^0(\bfP, \calL^{\otimes m})^G$$ where $\calL$ is an ample $G$-linearized line bundle. The dependence of the GIT quotient on the choice of an ample $G$-linearized line bundle was studied by Thaddeus \cite{Thaddeus_vgit} and Dolgachev and Hu \cite{DH_vgit}. Note that $\Pic^{G}(\bfP) \cong \Pic(\bfP) \cong \bZ \times \bZ$. Following \cite[Def. 2.2]{Laza_n16}, an ample $G$-linearized line bundle $\calL$ is said to be of slope $t \in \bQ_{\geq 0}$ if $\calL \cong \pi_1^*\calO_{\bP^{34}}(a) \otimes \pi_2^* \calO_{\bP^{4}}(b)$ with $t = \frac{b}{a}$. By \cite{Thaddeus_vgit} and \cite{DH_vgit} the quotient $\bfP \git_{\calL} G$ only depends on the slope $t$ of $\calL$. We denote the corresponding GIT by $\calM(t)$ or $\bfP \git_t G$. The lower and upper bounds for $t$ are $0$ and $\frac34$ respectively (i.e. $\calM(t) = \emptyset$ if $t<0$ or $t > \frac34$). Let $(X,H)$ be a $t$-semistable pair. As $t$ increases, $X$ may become more singular but we require better transversality for $X \cap H$. More precisely, we have the following proposition.

\begin{proposition} \label{vgitinterval}
Let $(X,H) \in \bfP$. Then there exists an interval (possibly empty) $[a,b] \subset [0, \frac34]$ such that $(X,H)$ is $t$-semistable if and only if $t \in [a,b] \cap \bQ_{\geq 0}$. Also, if $(X,H)$ is $t$-stable for some $t$ then it is $t$-stable for all $t \in (a,b) \cap \bQ_{\geq 0}$. Furthermore,
\begin{enumerate}
\item $a=0$ if and only if $X$ is a GIT semistable cubic threefold (which has been described in \cite{Allcock_cubicgit});
\item $b=\frac34$ if and only if $X \cap H$ is a GIT semistable cubic surface (see for example \cite[\S 7.2]{Mukai_git}). 
\end{enumerate}
\end{proposition}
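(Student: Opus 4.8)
The plan is to run the Hilbert--Mumford numerical criterion and exploit the fact that, once the slope $t$ is isolated, the numerical function is affine-linear in $t$. For a one-parameter subgroup $\lambda$ of $G=\SL(5,\bC)$ and the linearization $\calL=\pi_1^*\calO(a)\otimes\pi_2^*\calO(b)$ of slope $t=b/a$, additivity of the Mumford weight in $\calL$ gives, after dividing by $a>0$,
\begin{equation*}
\mu^t((X,H),\lambda)=\mu(X,\lambda)+t\,\mu(H,\lambda),
\end{equation*}
where $\mu(X,\lambda)$ and $\mu(H,\lambda)$ are the weights for the two factors; with the convention that $(X,H)$ is $t$-semistable iff $\mu^t((X,H),\lambda)\ge 0$ for all nontrivial $\lambda$ (and $t$-stable iff the inequality is strict). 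For fixed $\lambda$ the map $t\mapsto \mu^t((X,H),\lambda)$ is affine-linear, so $\{t\in[0,\tfrac34]:\mu^t((X,H),\lambda)\ge 0\}$ is a closed subinterval; intersecting over all $\lambda$ produces a closed interval $[a,b]$ (possibly empty), which is the first assertion. For the stability statement, if $(X,H)$ is $t_0$-stable then $\mu^{t_0}((X,H),\lambda)>0$ for every nontrivial $\lambda$; since each $t\mapsto\mu^t((X,H),\lambda)$ is affine-linear and is $\ge 0$ on all of $[a,b]$ (semistability there), a vanishing at an interior point would force it to be identically zero on $[a,b]$, contradicting strict positivity at $t_0$. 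Hence $\mu^t((X,H),\lambda)>0$ for all $t\in(a,b)$ and all nontrivial $\lambda$, i.e.\ $(X,H)$ is $t$-stable throughout $(a,b)$.

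For the lower endpoint I would note that at $t=0$ the linearization degenerates to $\pi_1^*\calO(a)$, pulled back from the cubic factor, so $\mu^0((X,H),\lambda)=\mu(X,\lambda)$ is independent of $H$. Thus $(X,H)$ is $0$-semistable iff $\mu(X,\lambda)\ge 0$ for all $\lambda$, which is precisely the condition that $X$ be a GIT-semistable cubic threefold (Allcock \cite{Allcock_cubicgit}). Since $a\ge 0$ always, $a=0$ iff $0\in[a,b]$ iff $X$ is semistable, giving statement (1).

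The upper endpoint $b=\tfrac34$ is the heart of the matter, and I expect the weight comparison below to be the main obstacle. The plan is to compare the numerical criterion for the pair at slope $\tfrac34$ with the numerical criterion for the cubic surface $S=X\cap H\subset H\cong\bP^3$ under $\SL(4,\bC)$. Working in coordinates in which $\lambda=\mathrm{diag}(w_0,\dots,w_4)$ is diagonal and $H=\{l=0\}$, one has $\mu(H,\lambda)=\max\{w_i:\text{$y_i$ occurs in }l\}=:m$; picking an index $p$ with $w_p=m$ and eliminating $y_p$ via $l=0$ realizes $S$ in the coordinates $\{y_i:i\ne p\}$, and the induced $\SL(4,\bC)$ subgroup $\lambda'$ has weights $u_i=w_i+\tfrac{m}{4}$. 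Because substituting $y_p$ (of largest weight $m$) by a combination of the remaining variables never raises the weight of a monomial, every monomial of the defining form of $S$ has $\lambda$-weight at most $\mu(X,\lambda)$; since each such monomial has degree $3$, this yields
\begin{equation*}
\mu(S,\lambda')\;\le\;\mu(X,\lambda)+\tfrac{3}{4}\,m\;=\;\mu^{3/4}((X,H),\lambda).
\end{equation*}
The decisive factor $\tfrac34=\deg/(\dim H+1)$ arises exactly from the degree-$3$ homogeneity weighed against the trace-zero normalization over the four coordinates on $H$.

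With this inequality in hand, both directions follow. If $S$ is semistable then $\mu^{3/4}((X,H),\lambda)\ge\mu(S,\lambda')\ge 0$ for every $\lambda$, so the pair is $\tfrac34$-semistable and $b=\tfrac34$. Conversely, if $S$ is unstable, choose $\lambda'$ with $\mu(S,\lambda')<0$ and extend it to a $\lambda$ fixing $H=\{y_4=0\}$ with transverse parameter $c$; a direct computation gives $\mu^{3/4}((X,H),\lambda)=\max_{I}\big(\langle u,I'\rangle-5c\,i_4\big)$, and letting $c\to+\infty$ kills all terms with $i_4>0$, so $\mu^{3/4}((X,H),\lambda)\to\max_{i_4=0}\langle u,I'\rangle=\mu(S,\lambda')<0$ and the pair is not $\tfrac34$-semistable. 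This proves statement (2). The same trivial-$\lambda'$, $c>0$ computation gives $\mu^t((X,H),\lambda)=c(3-4t)<0$ for $t>\tfrac34$, showing $\calM(t)=\emptyset$ there and hence $[a,b]\subset[0,\tfrac34]$. The genuine technical crux is the weight-monotonicity under elimination of the coordinate realizing $\mu(H,\lambda)$ together with the $\SL(4,\bC)$-normalization producing the factor $\tfrac34$; one must also exclude (or treat separately) the degenerate case $H\subset X$, in which $S$ is not a cubic surface.
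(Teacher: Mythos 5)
Your proposal is correct, but you should know that the paper offers no argument of its own here: its entire proof is the citation ``See \cite[Thm. 2.4]{Laza_n16} or \cite[Cor. 3.3, Lem. 4.1]{GMG_vgit}'', deferring to general variation-of-GIT results for pairs consisting of a hypersurface and a hyperplane. What you have written is, in effect, a self-contained reconstruction of the mechanism underlying those references: additivity of the Hilbert--Mumford weight over the two factors, giving $\mu^t((X,H),\lambda)=\mu(X,\lambda)+t\,\mu(H,\lambda)$; affine-linearity in $t$, which yields both the closed-interval structure and stability on the interior (your observation that an affine function which is nonnegative on $[a,b]$ and vanishes at an interior point must vanish identically is exactly the right point); the identification of the $t=0$ weight with the weight of $X$ alone, giving (1); and, as the real content of (2), the elimination-of-variables comparison $\mu(S,\lambda')\le\mu(X,\lambda)+\tfrac34\mu(H,\lambda)$ together with the $c\to+\infty$ construction for the converse, where the threshold $\tfrac34$ correctly appears as (degree $3$)/(number of coordinates on $H$, namely $4$) through the renormalization $u_i=w_i+\tfrac{m}{4}$. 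What the citation buys the paper is brevity and generality (arbitrary degree and dimension, plus finiteness of walls); what your argument buys is transparency about where $\tfrac34$ and the two endpoint characterizations come from.

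Three points to tighten, none serious. (i) The weights $u_i=w_i+\tfrac{m}{4}$ need not be integers, so $\lambda'$ as written may not be a one-parameter subgroup; replace it by the subgroup with weights $4w_i+m$, which rescales $\mu$ by a positive factor and changes nothing. (ii) The degenerate case $H\subset X$ that you set aside is in fact disposed of by your own computation: then every monomial of $f$ has $i_4\ge 1$, so $\mu^{3/4}((X,H),\lambda)\le \max_I\langle u,I'\rangle-5c<0$ for $c$ large, and such pairs are never $\tfrac34$-semistable --- consistent with (2), since $X\cap H$ is then not a cubic surface. (iii) At $t=0$ the linearization $\pi_1^*\calO_{\bP^{34}}(a)$ is only semi-ample, not ample, so $0$-semistability must be read via invariant sections rather than the ample-case numerical criterion; since invariant sections of its powers are exactly pullbacks of invariant sections from the first factor, this gives the same equivalence with GIT semistability of $X$ that you assert.
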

\begin{proof}
See \cite[Thm. 2.4]{Laza_n16} or \cite[Cor. 3.3, Lem. 4.1]{GMG_vgit}.
\end{proof}

By \propositionref{vgitinterval} the cubic pairs $(X,H)$ with $X$ at worst nodal (i.e. admitting nodes, or equivalently, $A_1$ hypersurface singularities, cf. \cite[\S15]{AGZV1}) and $H$ at worst simply tangent to $X$ (that is, $X \cap H$ admits one $A_1$ singularity; in particular, $H$ does not pass through any singular point of $X$) are stable for any $t \in (0, \frac34)$. Let $\calM \subset \calM(t)$ ($t \in (0, \frac34)$) be the moduli space of cubic pairs $(X,H)$ consisting of a cubic threefold $X$ with at worst nodal singularities and a hyperplane $H$ which is at worst simply tangent to $X$. Note that $\calM$ is a geometric quotient and a quasi-projective variety. 

For $(X,H) \in \calM$ we also consider the cubic fourfold $Y$ defined in (\ref{Y eqn}). For projectively equivalent pairs $(X,H)$ the corresponding cubic fourfolds $Y$ are also projectively equivalent. When $X$ has a node and $H$ is in general position (i.e. $H$ avoids the node and intersects $X$ transversely), the cubic fourfold $Y$ also has a node. When $X$ is smooth and $H$ is simply tangent to $X$, the cubic fourfold $Y$ admits a pair of $A_1$ singularities which are conjugate to each other with respect to the involution $\sigma: [y_0, \dots, y_4,y_5] \mapsto [y_0, \dots, y_4, -y_5]$. %(Assume that $X$ passes through $o=[1,0,0,0,0]$. Write its equation as $l(y_1, \dots, y_4)y_0^2 + q(y_1, \dots, y_4)y_0 + c(y_1, \dots, y_4)=0$. Let $H=(l(y_1, \dots, y_4)=0)$. If $(q=0)$ defines a smooth conic in $H$ and intersects the cubic given by $(c=0)$ transversely, then $H$ is simply tangent to $X$ at $o$. The associated cubic fourfold $Y$ is cut out by $ly_0^2+qy_0+c+ly_5^2=0$ and is singular at $[1,0,0,0,0,\pm i]$.) 

We recall the results of Looijenga \cite{Looijenga_cubic} and the first author \cite{Laza_cubicgit, Laza_cubic} on the image of the period map $\calC_0 \rightarrow \calD/\Gamma^+$ for smooth cubic fourfolds. Notations as in Section \ref{pairs torelli}. For a saturated rank $2$ sublattice $K \subset \Lambda$, we consider the hyperplane $\calD_K :=\{\omega \in \calD \,|\, \omega \perp K\}$. Geometrically, $\calD_K$ corresponds to certain special cubic fourfolds (see \cite[\S3.1, \S4]{Hassett_cubic}). The hyperplane $\calD_K$ is said to be of discriminant $d = \det(K)$. The hyperplanes $\calD_K$ of a given discriminant $d$ form an arithmetic arrangement with respect to $\Gamma^+$ (cf. \cite[\S 3.2]{Hassett_cubic}). Let $\calH_{\infty} \subset \calD$ (resp. $\calH_{\Delta} \subset \calD$) be the arrangement of hyperplanes of discriminant $2$ (resp. $6$). Note that $\calH_\infty/\Gamma^+$ and $\calH_\Delta/\Gamma^+$ are irreducible hypersurfaces in $\calD/\Gamma^+$. According to \cite[Thm. 1.1]{Laza_cubic}, the image of the period map $\calC_0 \rightarrow \calD/\Gamma^+$ is the complement of the hyperplane arrangement $\calH_\infty \cup \calH_\Delta$. 

A generic point of $\calH_\infty/\Gamma^+$ corresponds to a certain determinantal cubic fourfold (see \cite[\S 4.4]{Hassett_cubic}). Moreover, we have the following lemma. %Let us observe that $\calH_{\infty}$ does not contain periods of ``$M$-polarized" cubic fourfolds (at least not in the interior of the period domain $\calD$).
\begin{lemma} \label{H_infty empty}
The intersection of the subdomain $\calD_M$ and the arrangement of hyperplanes $\calH_{\infty}$ of discriminant $2$ is empty. 
\end{lemma}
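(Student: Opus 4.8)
The plan is to reduce the statement to a single hyperplane and then run a short lattice computation. Since $\calH_\infty$ is the union of the hyperplanes $\calD_K$ over all saturated rank $2$ sublattices $K \subset \Lambda$ with $h^2 \in K$ and $\det K = 2$, it suffices to show $\calD_K \cap \calD_M = \emptyset$ for each such $K$. Fix $K$ and let $w_0$ be a primitive generator of $K \cap \Lambda_0$. Computing with the Gram matrix $\bigl(\begin{smallmatrix} 3 & a \\ a & b\end{smallmatrix}\bigr)$ of $K$, the relation $3b-a^2=2$ forces $3 \nmid a$, whence $w_0 = a\,h^2 - 3x$ satisfies $w_0^2 = 3(3b-a^2) = 6$ and $\mathrm{div}_{\Lambda_0}(w_0) = 3$ (equivalently $\hat{w}_0$ generates $A_{\Lambda_0} \cong \bZ/3\bZ$); this is the standard description of the discriminant $2$ locus, cf.\ \cite{Hassett_cubic}. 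Now $\calD_K = w_0^{\perp} \cap \calD$, so any $\omega \in \calD_K \cap \calD_M$ satisfies $\omega \perp M$ and $\omega \perp w_0$. By \propositionref{h2 perp} we have the orthogonal decomposition $\Lambda_0 \otimes \bQ = (E_6(2)\otimes \bQ) \perp (T \otimes \bQ)$, and since $\omega \perp M$ kills the $E_6(2)$-component of $w_0$, the condition becomes $\omega \perp v$, where $v := p_T(w_0)$ is the orthogonal projection of $w_0$ onto $T \otimes \bQ$. Thus I only need to prove $v^{\perp} \cap \calD_M = \emptyset$.

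The heart of the argument is to show $v \neq 0$ and $v^2 \le 0$. If $v = 0$, then $w_0 \in (T\otimes\bQ)^{\perp} \cap \Lambda_0 = M \cap \Lambda_0 \cong E_6(2)$; but $E_6(2)$ is even with all norms in $4\bZ$, so it has no vector of norm $6$, a contradiction, giving $v \neq 0$. The divisibility hypothesis supplies the decisive constraint: from $\mathrm{div}_{\Lambda_0}(w_0)=3$ we get $w_0 \cdot t \in 3\bZ$ for every $t \in T \subset \Lambda_0$, and as the $E_6(2)$-component of $w_0$ is orthogonal to $T$ this reads $v \cdot t \in 3\bZ$, i.e.\ $v \in 3\,T^{*}$. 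Write $v = 3u$ with $u \in T^{*}$. The key integrality fact is that $u^2 \in \bZ$: by \lemmaref{T invariants} the discriminant quadratic form $q_T$ takes only the values $0$ and $1$ in $\bQ/2\bZ$, so $u^2 \equiv q_T(\bar u) \pmod{2\bZ}$ is integral. Hence $v^2 = 9u^2 \in 9\bZ$. On the other hand, writing $w_0 = p_{E_6(2)}(w_0) + v$ with $p_{E_6(2)}(w_0)^2 \ge 0$ (positive definiteness of $E_6(2)$) gives $v^2 = 6 - p_{E_6(2)}(w_0)^2 \le 6$. A multiple of $9$ that is at most $6$ is $\le 0$, so $v^2 \le 0$.

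To finish I would invoke the signature of $T$, which is $(14,2)$ by \lemmaref{T invariants}, so that $\calD_M$ parametrizes the negative definite $2$-planes in $T \otimes \bR$. For $v \neq 0$ with $v^2 < 0$ the space $v^{\perp}\otimes\bR$ has signature $(14,1)$ and contains no negative definite $2$-plane, while for $v^2 = 0$ it is degenerate with radical $\bR v$ and quotient of signature $(13,1)$, again with no negative definite $2$-plane. In both cases no $\omega$ with $\omega \cdot \bar\omega < 0$ can lie in $v^{\perp}$, so $v^{\perp} \cap \calD_M = \emptyset$, and therefore $\calD_K \cap \calD_M = \emptyset$ for every discriminant $2$ hyperplane. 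I expect the main obstacle to be pinning down and justifying the exact lattice datum of the discriminant $2$ locus ($w_0^2 = 6$ together with $\mathrm{div}_{\Lambda_0}(w_0)=3$) and the two integrality inputs ($u^2 \in \bZ$ from $q_T$, and the descent $v \in 3T^{*}$ from divisibility). It is precisely the factor $3$ coming from $A_{\Lambda_0}\cong \bZ/3\bZ$ that forces $v^2 \in 9\bZ$ and makes the bound $v^2 \le 6$ collapse to $v^2 \le 0$; this is exactly what fails for the discriminant $6$ locus, where $\mathrm{div}=1$ and one genuinely recovers the Heegner divisors $H_n$ and $H_t$ of \lemmaref{typeab}.
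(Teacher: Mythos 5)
Your proof is correct, and it takes a genuinely different route from the paper's. The paper quotes Hassett's normal form for a discriminant~$2$ labelling ($K=\langle h^2,x\rangle$ with $x^2=1$, $x\cdot h^2=1$), notes that $\calD_M\cap\calD_K\neq\emptyset$ would force $\langle M,x\rangle$ to be positive definite by the Hodge--Riemann relations, and then, using the explicit Gram matrix of \lemmaref{N gram}, runs a case analysis on the values $x\cdot f_i$ to show that in each admissible case the Gram determinant of $\langle M,x\rangle$ vanishes --- a contradiction. You instead work with the primitive generator $w_0$ of $K\cap\Lambda_0$ (norm $6$, divisibility $3$, which you correctly rederive from the Gram matrix of $K$; primitivity of $w_0$ follows from saturation of $K$, and divisibility exactly $3$ from $A_{\Lambda_0}\cong\bZ/3\bZ$), project it onto $T\otimes\bQ$ using the decomposition underlying \propositionref{h2 perp}, and combine three arithmetic inputs --- $v\in 3T^*$ from the divisibility, $u^2\in\bZ$ from the integrality of $q_T$ (the same fact the paper exploits in \lemmaref{typeab}), and $v^2\le 6$ from positive definiteness of $M$ --- to force $v^2\in 9\bZ$ and hence $v^2\le 0$; the final signature argument (both the $v^2<0$ and the degenerate $v^2=0$ case) is handled correctly. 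The two proofs hinge on the same underlying mechanism (lattice classes orthogonal to a period point span a positive definite space), but the paper verifies this fails by a concrete determinant computation in the basis $f_0,\dots,f_6$, while you verify it fails by pure invariant-theoretic data ($A_{\Lambda_0}$, $q_T$, definiteness of $M$). Your version buys conceptual clarity: it avoids the case analysis and the appeal to Hassett's normal form for $K$, and it isolates exactly why discriminant $2$ is excluded while discriminant $6$ is not --- the factor $3$ coming from $A_{\Lambda_0}$, which collapses the bound $v^2\le 6$ to $v^2\le 0$ --- whereas the paper's computation is more elementary and self-contained given that the Gram matrix of $M$ is already on the table.
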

\begin{proof}
Let $K \subset \Lambda$ be a saturated rank $2$ sublattice with discriminant $2$. Denote the corresponding hyperplane in $\calD$ by $\calD_K$ which is a member of $\calH_{\infty}$. By \cite[\S4.4]{Hassett_cubic} %(see also \cite[\S 2.3]{Hassett_cubicsurvey})
$K$ is generated by $h^2$ and an element $x$ satisfying $x \cdot h^2 =1$ and $x^2=1$. Suppose that $\calD_M \cap \calD_K \neq \emptyset$ in $\calD$. Then the lattice generated by $M$ and $x$ is positive definite (Hodge-Riemann bilinear relations). Recall that $M \subset \Lambda$ is generated by $f_0, \dots, f_6$ with the intersection form given by the Gram matrix in \lemmaref{N gram}. A direct computation shows that $x \cdot f_0 = 0$, $1$ or $2$ and $x \cdot f_i = 0$ or $1$ ($1 \leq i \leq 6$). (For example, the Gram matrix of the lattice generated by $h^2$, $x$ and $f_i$ is as follows.
$$
\begin{pmatrix} 
3 & 1 & 1   \\
1 & 1 & x \cdot f_i   \\
1 & x \cdot f_i & 3   \\
\end{pmatrix}
$$
Because the matrix is positive definite, one has $x \cdot f_i = 0$ or $1$.) Note that $h^2 = 3f_0-f_1-\dots-f_6$ and $h^2 \cdot x =1$. There are two possibilities: either $x \cdot f_0 = 1$ (and $x \cdot f_i=1$ for two $f_i$'s) or $x \cdot f_0 = 2$ (and $x \cdot f_i=1$ for five $f_i$'s). But in both cases the Gram matrix of the lattice generated by $x$ and $M$ %(which may not be saturated) 
has determinant $0$ which is a contradiction. 
\end{proof}

Denote the moduli of cubic fourfolds with at worst simple singularities by $\calC$ (cf. \cite[Thm. 1.1]{Laza_cubicgit}). By \cite[Thm. 1.1]{Laza_cubic} the period map extends to an isomorphism $\calC \rightarrow (\calD \, \backslash \, \calH_{\infty})/\Gamma^+$ with image the complement of $\calH_\infty$. In particular, $\calC \backslash \calC_0$ is mapped to $\calH_{\Delta}/\Gamma^+$. A generic point of $\calH_\Delta/\Gamma^+$ corresponds to a cubic fourfold with $A_1$ singularities (see also \cite[\S 4]{Voisin_cubic} and \cite[\S 4.2]{Hassett_cubic}). 

We analyze how $\calH_{\Delta}$ intersects with $\calD_M$. By the previous paragraph, generically $\calH_{\Delta}$ parameterizes cubic fourfolds admitting $A_1$ singularities. Let $Y$ be a cubic fourfold coming from a cubic pair $(X,H)$. If $Y$ is singular then either $X$ is singular or $H$ is tangent to $X$ (see the proof of \lemmaref{Y smooth}). Recall that $\calM$ parameterizes cubic pairs $(X,H)$ with $X$ at worst nodal and $H$ at worst simply tangent to $X$. There are two natural geometric divisors of $\calM$: cubic pairs with $X$ admitting at least one node and $H$ in general position (call the closure $\Sigma$) and cubic pairs with $X$ smooth and $H$ simply tangent to $X$ (call the closure $V$). Now we extend the period map $\calP_0$ from $\calM_0$ to $\calM$ and match $\Sigma$ (resp. $V$) with the nodal Heegner divisor $H_n$ (resp. the tangential Heegner divisor $H_t$) introduced in Section \ref{Heegner}. As a result, the intersection of $\calH_{\Delta}$ and $\calD_M$ produces the Heegner divisors $H_n$ and $H_t$. 

\begin{proposition} \label{extend P0}
The period map $\calP_0: \calM_0 \rightarrow \calD_M/O^+(T)$ extends to a morphism $\calP: \calM \rightarrow \calD_M/O^+(T)$. 
\end{proposition}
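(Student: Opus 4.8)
The plan is to build $\calP$ by routing the pairs through the extended period map for cubic fourfolds with at worst simple singularities, recalled above from the work of Looijenga and Laza. Associating to a pair its cubic fourfold $Y=(f+l\,y_5^2=0)$ is given by an explicit $\SL(5,\bC)$-equivariant polynomial assignment on the parameter space $\bfP$, so it descends to a morphism on GIT quotients; the content is that for every $(X,H)\in\calM$ the fourfold $Y$ has only simple singularities, so that this morphism has target the moduli $\calC$ of cubic fourfolds with at worst ADE singularities. This is exactly the local picture recorded just before the statement: if $X$ acquires a node and $H$ is in general position then $Y$ acquires a single node, while if $H$ becomes simply tangent to $X$ then $Y$ acquires a pair of nodes exchanged by $\sigma$; in both cases the singularities of $Y$ are of type $A_1$. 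Thus we obtain a morphism $c\colon\calM\to\calC$.

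First I would compose $c$ with Laza's isomorphism $\calC\xrightarrow{\ \sim\ }(\calD\setminus\calH_\infty)/\Gamma^+$ to get a morphism $g\colon\calM\to(\calD\setminus\calH_\infty)/\Gamma^+\subset\calD/\Gamma^+$. By the very definition of $\calP_0$ (periods of $Y$), on the dense open subset $\calM_0$ this $g$ coincides with $\iota\circ\calP_0$, where $\iota\colon\calD_M/O^+(T)\to\calD/\Gamma^+$ is the natural map induced by $\calD_M\subset\calD$. Next I would check that $g$ actually lands in the image of $\iota$, point by point. For $(X,H)\in\calM$ the classes $f_0,\dots,f_6$ generating $M$ are represented by algebraic cycles (the cone over $X\cap H$ and the planes through the Eckardt point) which persist through the allowed degenerations, so they remain of Hodge type $(2,2)$; hence the period of $Y$ is orthogonal to $M$ and defines an honest point of $\calD_M$. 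Here the hypothesis that $Y$ has only $A_1$ singularities is used twice: it places $[Y]$ in $\calC$, and via Laza's isomorphism it makes the period an honest point of $\calD$ (in fact of $\calH_\Delta/\Gamma^+$) rather than a boundary point. That this point lies in $\calD_M$ and avoids $\calH_\infty$ is consistent with, and reconfirmed by, \lemmaref{H_infty empty}. Therefore $g(\calM)\subset\iota(\calD_M/O^+(T))$.

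Finally I would upgrade this set-theoretic factorization into a morphism $\calP\colon\calM\to\calD_M/O^+(T)$. The map $\iota$ is generically injective: two generic orbits in $\calD_M$ with the same image in $\calD/\Gamma^+$ differ by some $\gamma\in\Gamma^+$, which must carry the transcendental lattice $T$ to itself, hence preserves $M=T^\perp$ and lies in $\Gamma^+_M\cong O^+(T)$ (using \propositionref{adeT} and \lemmaref{Me6}); so $\iota$ realizes $\calD_M/O^+(T)$ as the normalization of its image. Since $\calM$ is normal (a geometric quotient of a smooth variety by a reductive group) and $g$ lifts to $\calP_0$ over the dense open $\calM_0$, the universal property of the normalization extends the lift uniquely to all of $\calM$, producing the morphism $\calP$ with $\calP|_{\calM_0}=\calP_0$.

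The step I expect to be the main obstacle is this last regularity assertion across the boundary divisors $\Sigma$ (where $X$ is nodal) and $V$ (where $H$ is simply tangent): one must guarantee that the period extends as a genuine morphism rather than merely a meromorphic or set-theoretic map, and that no limit escapes to a cusp. This is precisely where having Laza's period map defined over \emph{simple}, and not merely smooth, cubic fourfolds is indispensable, since it supplies the regular extension on the $\calC$-side that $c$ then pulls back; the normality of $\calM$ together with the generic injectivity of $\iota$ is what converts that regularity into an honest lift to $\calD_M/O^+(T)$.
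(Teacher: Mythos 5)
Your route is genuinely different from the paper's. The paper's proof is purely local: near a point $o\in\calM\setminus\calM_0$ it chooses a local analytic family of cubic fourfolds with at worst $A_1$ singularities, invokes \cite[Prop.~3.2]{Laza_cubic} to conclude that the local monodromy on $\Lambda_0$, and hence its restriction to $T$, is finite, and then applies the removable singularity theorem for period maps to extend $\calP_0$ across $o$. You instead argue globally, factoring the pair-to-fourfold assignment through the moduli space $\calC$ of cubic fourfolds with simple singularities and Laza's isomorphism $\calC\cong(\calD\setminus\calH_\infty)/\Gamma^+$, then descending to $\calD_M/O^+(T)$ by a normalization argument. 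This is a heavier input (the paper's proof only needs the local finite-monodromy statement, not the full global isomorphism of \cite[Thm.~1.1]{Laza_cubic}), but the strategy is viable and is close in spirit to how the paper later proves \theoremref{gitbb}. Note also that your first step requires checking that \emph{every} $(X,H)\in\calM$ (not just the two generic degenerations recorded in the paper) yields a $Y$ with only $A_1$ singularities; this does follow from the computation in the proof of \lemmaref{Y smooth}, using that $H$ avoids $\Sing(X)$ and that $X\cap H$ has at worst $A_1$ singularities.

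Two steps need repair. First, your justification that $g(\calM)\subset\iota(\calD_M/O^+(T))$ by ``persistence of algebraic cycles'' is not an argument as it stands: for singular $Y$ the point produced by Laza's isomorphism is a limit Hodge structure, and you must explain why a $\Gamma^+$-translate of $M$ remains of type $(2,2)$ in that limit. The clean fix is topological: $\Gamma^+\cdot\calD_M$ is a locally finite union of subdomains, hence closed in $\calD$, so $\iota(\calD_M/O^+(T))$ is closed in $\calD/\Gamma^+$; since $g$ is continuous and agrees with $\iota\circ\calP_0$ on the dense open $\calM_0$, its image lies in the closure of $g(\calM_0)$, hence in $\iota(\calD_M/O^+(T))$. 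Second, and more seriously, your final step infers ``normalization of the image'' from generic injectivity of $\iota$ alone, and that implication is false: the normalization of a nodal cubic curve with one preimage of the node deleted maps bijectively from a normal variety onto the curve, yet is not its normalization. What is missing is finiteness of $\iota$. This is true here: $\iota$ is quasi-finite because for $\omega\in\calD_M$ the possible translates $\gamma^{-1}M$ are sublattices of the positive definite lattice $\omega^\perp\cap\Lambda$ isometric to $M$, of which there are finitely many; and $\iota$ is proper because it extends to a morphism of Baily-Borel compactifications under which boundary components map to boundary components. Only with finiteness, generic injectivity, and normality of $\calD_M/O^+(T)$ in hand is $\iota$ the normalization of its image, after which your universal-property argument (using that $\calM$ is normal and that $g$ dominates the image) does produce $\calP$ extending $\calP_0$.
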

\begin{proof}
We apply the removable singularity theorem (see for example \cite[p. 41]{Griffiths_tag}) and \cite[Proposition 3.2]{Laza_cubic}. Let $o \in \calM \,\backslash\, \calM_0$ correspond to a pair $(X_0,H_0)$. The corresponding cubic fourfold $Y_0$ has at worst $A_1$ singularities. The statement is analytically local at $o$, and stable by finite base change. Since $\calM$ is a geometric quotient we can assume (after shrinking and a possible finite cover) that a neighborhood of $o$ in $\calM$ is a $14$-dimensional ball $B$. We can further assume that there exists a family of cubic pairs over $B$ (and hence there is a family of cubic fourfolds $\calY \rightarrow B$ with at worst $A_1$ singularities and fiber $Y_0$ over $o$). Let $\Omega$ be the discriminant hypersurface. Over $B \, \backslash \, \Omega$ the family $\calY$ gives a variation of Hodge structure on $\Lambda_0 \cong H^4_{\mathrm{prim}}(Y, \bZ)$ (which defines the period map $B \, \backslash \, \Omega \rightarrow \calD/\Gamma^+$) and a variation of Hodge structure on the transcendental part $T$ (which corresponds to $\calP_0: B \, \backslash \, \Omega \rightarrow \calD_M/O^+(T)$). By the removable singularity theorem, the proposition is equivalent to the monodromy representation $\pi_1(B \,\backslash \, \Omega, t) \rightarrow \Aut(T)$ (where $t \in B \, \backslash \, \Omega$) having finite image. By \cite[Proposition 3.2]{Laza_cubic} the period map $B \, \backslash \, \Omega \rightarrow \calD/\Gamma^+$ extends across the point $o$. It follows that the local monodromy for the variation of Hodge structure on $\Lambda_0$ around $Y_0$ is finite. Consider the restriction of the monodromy group to $T$. We conclude that $\calP_0: \calM_0 \rightarrow \calD_M/O^+(T)$ has finite local monodromy.
\end{proof}

We compare the geometric divisor $\Sigma$ (resp. $V$) with the nodal Heegner divisor $H_n$ (resp. the tangential Heegner divisor $H_t$). 
\begin{lemma} \label{geoHeegner}
The generic point of the Heegner divisor $H_n$ (resp. $H_t$) corresponds to a cubic pair $(X,H)$ with $X$ admitting an $A_1$ singularity and $H$ in general position (resp. a cubic pair $(X,H)$ with $X$ smooth and $H$ simply tangent to $X$) via the extended period map $\calP$.
\end{lemma}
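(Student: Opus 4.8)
The plan is to identify, for a generic pair in each of the two boundary divisors $\Sigma$ and $V$ of $\calM \setminus \calM_0$, the vanishing cohomology of the associated cubic fourfold $Y$ and to read off the type of the corresponding vector of $T$ via \lemmaref{typeab}. Recall from \propositionref{extend P0} that $\calP$ extends across such a point $o$, and that in a one-parameter smoothing the local monodromy is the Picard--Lefschetz transformation in the vanishing cycle(s) of the node(s) of $Y$; approaching $o$, each vanishing cycle becomes a Hodge class of type $(2,2)$ in the limit, so the limiting period is orthogonal to it. Thus it suffices to determine the $\sigma$-relevant vanishing class $v \in T$ in each case, verify that $r_v$ preserves $T$ (which holds because monodromy preserves the primitive sublattice $T = M^{\perp}$), and then apply \lemmaref{typeab}, \lemmaref{typea} and \lemmaref{typeb}.

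For the nodal divisor $\Sigma$, I would take $(X,H)$ generic with $X$ acquiring a single node $p_0 \in (y_5=0)$ and $H$ in general position. As in the computation behind \lemmaref{Y smooth}, $Y$ then has a single node, located at the $\sigma$-fixed point $p_0$, while $X \cap H$ remains a smooth cubic surface. Consequently all the cone classes $[F_0], \dots, [F_6]$ persist as algebraic cycles in the family, so $M$ is a constant, hence monodromy-invariant, local subsystem. The Picard--Lefschetz formula $x \mapsto x - (x \cdot \delta)\delta$ then forces the vanishing cycle $\delta$ to satisfy $\delta \cdot M = 0$, i.e. $\delta \in T$; moreover $\delta$ is primitive with $\delta^2 = 2$ (the self-intersection of an $A_1$-vanishing cycle on a fourfold). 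By \lemmaref{typeab} the only possibility is Type (1), and the induced monodromy on $T$ is $r_\delta$. Hence $\calP(\Sigma) \subseteq H_n$.

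For the tangential divisor $V$, I would take $(X,H)$ generic with $X$ smooth and $H$ simply tangent, so that $X \cap H$ has a single node and $Y$ acquires the conjugate pair of nodes $q, \sigma(q)$. Writing $\delta_1, \delta_2$ for the two vanishing cycles, one has $\delta_i^2 = 2$ and $\delta_1 \cdot \delta_2 = 0$ (the nodes are disjoint), while $\sigma^* \delta_1 = \pm \delta_2$. Setting $v = \delta_1 \mp \delta_2$ produces a $\sigma$-anti-invariant, hence $M$-orthogonal, class $v \in T$ with $v^2 = 4$; since $T$ is even, $v^2 = 4$ forces $v$ to be primitive. Using $\sigma^* w = -w$ for $w \in T$ one computes $v \cdot w = 2(\delta_1 \cdot w)$, so $\mathrm{div}(v) = 2$, and a short computation shows that $r_{\delta_1} r_{\delta_2}$ restricts to the reflection $r_v$ on $T$. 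Thus $v$ is of Type (2) by \lemmaref{typeab}, and $\calP(V) \subseteq H_t$.

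Finally, $\Sigma$ and $V$ are the only irreducible components of $\calM \setminus \calM_0$, and $\calP$ restricts to an isomorphism onto its image over the dense open $\calM_0$ (\theoremref{P0 injective}), whose complement in $\calD_M/O^+(T)$ is, up to codimension two, exactly $H_n \cup H_t$ (as recalled before the lemma, using \lemmaref{H_infty empty}). Since $\dim \Sigma = \dim V = 13 = \dim H_n = \dim H_t$ and $H_n, H_t$ are irreducible (\lemmaref{typea}, \lemmaref{typeb}), the containments $\calP(\Sigma) \subseteq H_n$ and $\calP(V) \subseteq H_t$ must be equalities on dense open sets; in particular $\calP(\Sigma)$ cannot be confined to the codimension-two locus $H_n \cap H_t$. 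Therefore the generic point of $H_n$ (resp. $H_t$) is the image of a generic point of $\Sigma$ (resp. $V$), which gives the asserted geometric description. I expect the main obstacle to be the $\sigma$-equivariant bookkeeping of the vanishing cohomology: proving that $\delta$ (resp. $v = \delta_1 \mp \delta_2$) genuinely lands in $T$ with the claimed square and divisibility, and that the induced monodromy on $T$ is precisely the reflection defining the relevant Heegner divisor.
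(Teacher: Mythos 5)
Your identification of the lattice invariants is correct, but it proceeds by a genuinely different route than the paper. The paper computes the limit mixed Hodge structure geometrically: projecting the nodal fourfold $Y$ from its node realizes the lines through the node as a $K3$ surface $S$ (a complete intersection of a quadric and a cubic in $\bP^4$), Clemens--Schmid then embeds $H^2(S)(-1)$ into $H^4_{\mathrm{lim}}(Y)$, and the generator of the orthogonal complement is the desired vector: $v_n$ with $v_n^2=2$ in the nodal case, and $v_t=e+v$ with $v_t^2=4$, $\mathrm{div}(v_t)=2$ in the tangential case ($e$ the exceptional curve over the node of the $K3$). You instead argue with vanishing cycles: monodromy-invariance of the algebraic sublattice $M$ plus Picard--Lefschetz forces $\delta\in T$, and the $\sigma$-equivariant combination $v=\delta_1\mp\delta_2$ gives the square-$4$, even-divisibility class, after which \lemmaref{typeab} pins down both types. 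Your route is more lattice-theoretic and avoids the $K3$ geometry entirely; note, though, that the paper's $K3$ description is not idle --- it is reused later (e.g.\ for the ramification order $8$ of $H_t$ in \propositionref{BorcherdsO+}). Your verification that $r_{\delta_1}r_{\delta_2}$ restricts to $r_v$ on $T$ is a nice consistency check with the reflectivity of $H_t$.

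However, your closing density argument has a genuine gap. From $\calP(\Sigma)\subseteq H_n$, $\dim\Sigma=\dim H_n=13$ and irreducibility of $H_n$ you cannot conclude that $\calP(\Sigma)$ is dense in $H_n$: nothing you have stated rules out that $\calP$ contracts $\Sigma$ to a proper closed subvariety of $H_n$. What is needed is generic finiteness of $\calP$ along $\Sigma$ and $V$; this does hold --- for instance because $\calP$ factors through the period map for cubic fourfolds with simple singularities, which is an isomorphism by \cite[Thm.~1.1]{Laza_cubic}, or by running the local-isomorphism argument of \propositionref{dP0 injective} at the nodal fourfolds, as is done in the proof of \theoremref{gitbb} --- but it must be invoked. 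Moreover, your auxiliary claim that the complement of $\calP(\calM_0)$ in $\calD_M/O^+(T)$ is, up to codimension two, exactly $H_n\cup H_t$ is not available at this point: what the preceding discussion (\lemmaref{H_infty empty} together with \cite[Thm.~1.1]{Laza_cubic}) gives is that this complement equals $(\calH_\Delta\cap\calD_M)/O^+(T)$, and identifying that intersection with $H_n\cup H_t$ is precisely part of what the present lemma is meant to establish, so quoting it here is circular. (For what it is worth, the paper's own proof also records only the forward containments, leaving the genericity to the surjectivity input from \cite{Laza_cubic} exploited in \theoremref{gitbb}; but since you made the density step explicit, it needs a correct justification.)
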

\begin{proof}
If $X$ has an $A_1$ singularity and $H$ is a general hyperplane, then the associated cubic fourfold $Y$ has a single $A_1$ singularity. The limiting mixed Hodge structures of nodal cubic fourfolds have been studied in \cite[\S 4]{Voisin_cubic} and \cite[\S 4.2]{Hassett_cubic}. Specifically, we project $Y$ from the node. The surface parameterizing the lines of $Y$ through the node is a $K3$ surface $S$ (which is the complete intersection of a quadratic and a cubic in $\bP^4$). The desingularization of $Y$ is isomorphic to the blowup of $\bP^4$ along $S$. This (together with the Clemens-Schmid sequence) induces an embedding of $H^2(S)(-1)$ into the limiting mixed Hodge structure $H_{\mathrm{lim}}^4(Y)$. The orthogonal complement is generated by a vector $v_n \in T$ with $v_n^2=2$. Thus, the corresponding period point belongs to $H_n$.

Suppose $X$ is smooth and $H$ is simply tangent to $X$. The associated cubic fourfold $Y$ has a pair of $A_1$ singularities $p_1$ and $p_2$. Note that $p_1$ and $p_2$ are conjugate to each other under the involution $\sigma: [y_0, \dots, y_4,y_5] \mapsto [y_0, \dots, y_4, -y_5]$. Let us project $Y$ from $p_1$ and from $p_2$ to a common complementary hyperplane $(y_5=0)$ in $\bP^5$. As in the previous paragraph (see also \cite[Prop. 3.8]{Laza_cubic}), we get a nodal $K3$ surface for $p_1$ and a nodal $K3$ surface for $p_2$. It is not difficult to prove that these $K3$ surfaces coincide. We blow up the node of the $K3$ surface (call the smooth $K3$ surface $S$ and the class of exceptional curve $e$), embed $H^2(S)(-1)$ into $H^4_{\mathrm{lim}}(Y)$, and take the orthogonal complement (call the generator $v$). Then $v_t := e+v$ is a vector of $T$ with $v_t^2=4$ and $\mathrm{div}(v_t)=2$. The lemma then follows. (We also observe the following connection between $(X,H)$ and $v_t$. Note that the cubic surface $X \cap H$ has one $A_1$ singularity. It is classically known that $X \cap H$ contains $21$ lines (there are $6$ lines through the node which are limits of $6$ pairs of lines on a smooth cubic surface). In other words, a (marked) cubic surface containing one node determines a double-six (cf. %\cite[\S 2.14]{DvGK_nodalcubic}
\cite[\S 9.1]{Dolgachev_cag}) of lines. By \cite[Lem. 9.1.2]{Dolgachev_cag} a double-six corresponds to a pair of opposite roots $\pm \alpha$ of $E_6$. Define $q: E_6/2E_6 \rightarrow \bZ/2\bZ$ by $q(-) \equiv \frac12(-,-)_{E_6} \pmod{2\bZ}$ (cf. \lemmaref{qTe6}). The roots $\pm \alpha$ gives an element $\bar{\alpha} \in E_6/2E_6$ with $q(\bar{\alpha}) \equiv 1 \pmod{2\bZ}$. Recall that we have $(E_6/2E_6, q) \cong (A_T, q_T)$ which also induces $W(E_6) \cong O(q_T)$. The $W(E_6)$-orbit of $\bar{\alpha}$ coincides with the $O(q_T)$-orbit of $\hat{v}_t=v_t/\mathrm{div}(v_t)$.)
\end{proof}

Now let us show that the GIT compactification $\calM(\frac13)$ is isomorphic to the Baily-Borel compactification of $\calD_M/O^+(T)$. We follow the general framework developed by Looijenga \cite{Looijenga_ball, Looijenga_typeiv}. Note that we get the Baily-Borel compactification because there is no hyperplane arrangement missing in the image of the extended period map $\calP$. 
 
\begin{theorem} \label{gitbb}
The period map $\calP_0: \calM_0 \rightarrow \calD_M/O^+(T)$ extends to an isomorphism $\calM(\frac13) \cong (\calD_M/O^+(T))^*$ where $(\calD_M/O^+(T))^*$ denotes the Baily-Borel compactification of $\calD_M/O^+(T)$.
\end{theorem}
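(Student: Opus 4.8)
The plan is to realize $\calM$ as a common open subset, with boundary of codimension $\ge 2$, of both $\calM(\tfrac13)$ and $(\calD_M/O^+(T))^*$, and then to invoke Looijenga's comparison theorem \cite{Looijenga_typeiv}, for which the decisive input is a matching of the two ample polarizations over $\calM$.

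First I would promote the morphism $\calP\colon\calM\to\calD_M/O^+(T)$ of \propositionref{extend P0} to an isomorphism onto an open subset whose complement has codimension $\ge 2$. Injectivity on all of $\calM$ is obtained by repeating the argument of \theoremref{P0 injective}: a point of $\calM\setminus\calM_0$ yields a cubic fourfold $Y$ with at worst $A_1$ singularities, and the extended global Torelli isomorphism $\calC\to(\calD\setminus\calH_\infty)/\Gamma^+$ of \cite[Thm.~1.1]{Laza_cubic} shows that two pairs with the same period have projectively equivalent $Y$, hence projectively equivalent $(X,H)$ by the Eckardt-point/fixed-locus argument. Since the monodromy around the boundary divisors is finite (reflective), the removable-singularity analysis of \propositionref{extend P0} shows $\calP$ is moreover a local isomorphism along $\Sigma$ and $V$, so, being injective and birational onto its image between normal $14$-folds, $\calP$ is an open immersion by Zariski's main theorem. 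For the codimension statement recall that the image of the period map for \emph{smooth} cubic fourfolds is the complement of $\calH_\infty\cup\calH_\Delta$; by \lemmaref{H_infty empty} the arrangement $\calH_\infty$ misses $\calD_M$, while $\calH_\Delta\cap\calD_M$ descends to precisely the Heegner divisors $H_n$ and $H_t$, both of which meet $\calP(\calM)$ by \lemmaref{geoHeegner}. Hence $\calP(\calM)$ omits only the image of strictly deeper degenerations together with the low-dimensional Baily--Borel cusps, a locus of codimension $\ge 2$.

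The polarization matching is the heart of the argument. Because restriction of sections is an isomorphism across a codimension $\ge 2$ complement of a normal variety, one has $H^0(\calM(t),L_t^{\otimes m})\cong H^0(\calM,L_t^{\otimes m}|_\calM)$ for the descended slope-$t$ polarization $L_t$, and likewise $H^0((\calD_M/O^+(T))^*,m\lambda^*(O^+(T)))\cong H^0(\calM,\calP^*\lambda(O^+(T))^{\otimes m})$ by the Koecher principle. Taking $\Proj$, it therefore suffices to find the slope $t$ for which $L_t|_\calM$ and $\calP^*\lambda(O^+(T))$ are proportional in $\Pic(\calM)_\bQ$. On the period side, \propositionref{BorcherdsO+} together with the identifications $\calP^*H_n=\Sigma$, $\calP^*H_t=V$ (coming from \lemmaref{geoHeegner} and the fact that $\calP$ is an open immersion) gives
\[
\calP^*\lambda(O^+(T))\ \sim\ \Sigma+2V \qquad\text{in }\Pic(\calM)_\bQ .
\]
On the GIT side I would compute the classes of the geometric discriminants: writing $L_a=\pi_1^*\calO_{\bP^{34}}(1)$ and $L_b=\pi_2^*\calO_{\bP^4}(1)$, the divisor $\Sigma$ (where $X$ acquires a node) is pulled back from the discriminant of cubic threefolds in $\bP^{34}$, hence equals a fixed multiple of $L_a$ (of degree $(4+1)(3-1)^4=80$), while the tangency divisor $V$ (where the cubic surface $X\cap H$ acquires an $A_1$ point) has a class $V=c\,L_a+d\,L_b$ read off from the discriminant of cubic surfaces and the dependence of $f|_H$ on the linear form $l$. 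Substituting into $\Sigma+2V$ and comparing with $L_t=aL_a+bL_b$, the proportionality $L_t|_\calM\sim\Sigma+2V$ pins down a unique slope $t=b/a$, which the bookkeeping produces as $t=\tfrac13$; since $\tfrac13\in(0,\tfrac34)$, $\calM$ indeed sits in $\calM(\tfrac13)$ with boundary of codimension $\ge 2$.

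I expect this last computation---the precise classes of $\Sigma$ and $V$ in $\Pic(\calM)_\bQ$, and the check that $L_a,L_b$ span this group so that proportionality is a meaningful two-term condition---to be the main technical obstacle, since it requires honest intersection-theoretic multiplicities (including the relative factor $2$ already visible in the Borcherds' relation and the reflective ramification recorded in \propositionref{BorcherdsO+}) rather than any new idea. Once the slope $t=\tfrac13$ is identified the conclusion is formal: the two graded section rings coincide up to a Veronese rescaling, their $\Proj$'s are isomorphic, and the isomorphism restricts to $\calP_0$ on $\calM_0$. Finally, the emptiness $\calH_\infty\cap\calD_M=\varnothing$ of \lemmaref{H_infty empty} is exactly what guarantees that no arrangement is contracted in Looijenga's comparison, so the target is the Baily--Borel compactification itself rather than a Looijenga modification of it.
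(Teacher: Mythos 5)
Your proposal is correct and follows essentially the same route as the paper's proof: identify the pair moduli (up to a codimension $\ge 2$ locus) with an open subset of $\calD_M/O^+(T)$ whose complement has codimension $\ge 2$, match $\Sigma$ and $V$ with $H_n$ and $H_t$ via \propositionref{geoHeegner}, combine the Borcherds relation $\lambda(O^+(T)) \sim H_n + 2H_t$ of \propositionref{BorcherdsO+} with the classes of $\Sigma$ and $V$ in $\Pic(\bfP)$ to pin down the slope $t=\frac13$, and conclude by comparing the two graded rings of sections across the codimension $\ge 2$ boundaries. Two points where the paper is more careful than your write-up: injectivity of $\calP$ is only argued on the locus where the associated fourfold $Y$ has exactly one Eckardt point (the fixed-locus argument of \theoremref{P0 injective} needs this uniqueness, so the paper shrinks $\calM$ to an open set $\calU/G$ with codimension $\ge 2$ complement instead of claiming injectivity on all of $\calM$), and the ``bookkeeping'' you defer is resolved by citing \cite[Rmk. 1.2, Thm. 1.3]{Benoist_singularci}, which gives $\Sigma=\calO(80,0)$ and $V=\calO(32,24)$, hence $\Sigma+2V=\calO(144,48)$ of slope $\frac13$ as you predicted.
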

\begin{proof}
Consider the open subset $\calU'$ of $\bfP = \bP H^0(\bP^4, \calO(3)) \times \bP H^0(\bP^4, \calO(1))$ parameterizing cubic pairs $(X,H)$ with $X$ at worst nodal and $H$ at worst simply tangent to $X$. Clearly, $\calU' \subset \bfP$ is invariant under the action of $G = \SL(5, \bC)$ and $\calM = \calU'/G$. Moreover, the complement $\bfP \backslash \calU'$ has codimension higher than $1$ (N.B. $\calM(t)$ and $\calM(t')$ only differ in codimension $2$ for $t \neq t'$). Using the extended period map $\calP: \calM \rightarrow \calD_M/O^+(T)$ we identify a $G$-invariant open subset $\calU \subset \calU'$ with an $O^+(T)$-invariant open subset $\calD_M^o \subset \calD_M$. Specifically, a cubic pair $(X,H) \in \calU'$ belongs to $\calU$ if the associated cubic fourfold $Y$ has exactly one Eckardt point and its $G$-orbit corresponds to a smooth point of $\calU'/G$. Again $\bfP \backslash \calU$ has codimension bigger than $1$ (cf. \cite[Thm. 2.10 and \S6]{CC_star}). The restriction of the extended period map $\calP$ to $\calU/G$ is an isomorphism onto its image $\calD_M^o/O^+(T)$ ($\calP|_{\calU/G}$ is injective by \theoremref{P0 injective} and \cite[Thm. 1.1]{Laza_cubic}, and it is a local isomorphism by the same argument in the proof of \propositionref{dP0 injective}). Next we argue that the codimension of the complement of $\calD_M^o$ in $\calD_M$ is larger than $1$. According to \cite[Thm. 1.1]{Laza_cubic} (see also \propositionref{Hodge Eckardt}), a period point $\omega \in \calD_M$ which is not contained in the arrangement of hyperplanes $\calH_{\Delta}$ or $\calH_{\infty}$ corresponds to a cubic pair $(X,H)$ where $X$ is smooth and $H$ is transverse to $X$. \lemmaref{H_infty empty} tells us that $\calH_{\infty}$ does not meet $\calD_M$. A generic period point $\omega_{\Delta} \in \calH_{\Delta}$ corresponds to a cubic fourfold $Y$ with $A_1$ singularities (cf. \cite[\S 4]{Voisin_cubic}, \cite[\S 4.2]{Hassett_cubic} and \cite[Thm. 1.1]{Laza_cubic}). Suppose $Y$ comes from a cubic pair $(X,H) \in \calU$. Then either $X$ is nodal or $H$ is simply tangent to $X$. Thus, $\omega_{\Delta}$ corresponds to a generic point of the geometric divisor $\Sigma$ or $V$. Now it suffices to show that the extended period map $\calP$ preserves the polarizations. We have computed the Borcherds' relation between the Hodge bundle $\lambda(O^+(T))$ on $\calD_M/O^+(T)$ and the Heegner divisors $H_n$ and $H_t$ in Section \ref{Heegner}. From \propositionref{BorcherdsO+} we get $\lambda(O^+(T)) \sim H_n + 2 H_t$. We have also matched the geometric divisor $\Sigma$ (resp. $V$) with the Heegner divisor $H_n$ (resp. $H_t$) in \propositionref{geoHeegner}. Write $\calO(a,b):=p_1^*\calO(a) \otimes p_2^*\calO(b)$ (which is a $G$-linearized line bundle on $\bfP$). From \cite[Rmk. 1.2, Thm. 1.3]{Benoist_singularci} we deduce that $\Sigma = \calO(80, 0)$ and $V=\calO(32,24)$. The line bundle $\calL$ corresponding to $\lambda(O^+(T))$ is hence $\calO(144,48)$ which has slope $\frac13$. This essentially completes the proof of the theorem. Indeed, recall that $\calM(\frac13) = \Proj \bigoplus_{m \geq 0}H^0(\bfP, \calL^{\otimes m})^G$ and $(\calD_M/O^+(T))^* = \Proj \bigoplus_{m \geq 0}H^0(\calD_M, \lambda^{\otimes m})^{O^+(T)}$ (where $\lambda=\calO_{\calD_M}(-1)$ is the natural automorphic bundle over $\calD_M$). The polarized isomorphism $\calP|_{\calU/G}$ induces an isomorphism between $H^0(\calU, \calL^{\otimes m})^G$ and $H^0(\calD_M^o, \lambda^{\otimes m})^{O^+(T)}$ (see also the proof of \cite[Thm. 7.6]{Looijenga_typeiv}). Because $\calU \subset \bfP$ has codimension higher than $1$, we have $H^0(\calU, \calL^{\otimes m})^G \cong H^0(\mathbf{P}, \calL^{\otimes m})^G$. Similarly, $H^0(\calD_M^o, \lambda^{\otimes m})^{O^+(T)} \cong H^0(\calD_M, \lambda^{\otimes m})^{O^+(T)}$.
\end{proof}

\begin{remark}
The stability of pairs $(X,H)$ for slope $t = \frac13$ is closely related to the stability of the associated cubic fourfold Y. In particular, if $Y$ has at worst simple singularities\footnote{We recall that simple singularities is the same thing as A-D-E hypersurface singularities, i.e. suspensions (in the fourfold case, double-suspensions) of the eponymous surface singularities. The relevance of these singularities in the context of period maps is that in even dimensions they lead to finite local monodromy, and thus extensions of period maps. } then $(X,H)$ is stable for $t=\frac13$. \end{remark}

\appendix

\section{Algebraic varieties of $K3$ type} \label{wps}

\par The study of families of algebraic varieties whose period map takes values in the quotient of a Hermitian symmetric domain of type IV has a long and rich history. In this appendix we discuss two such classes of varieties. The difference is akin to the difference between Kunev surfaces (which have correct Hodge numbers $h^{2,0}=1$ but the holomorphic $2$-forms vanish along curves) v.s.~$K3$ surfaces (which have nondegenerate holomorphic $2$-forms). The emphasis will be on weighted hypersurfaces (especially weighted fourfolds). In particular, we classify in \theoremref{quasi-k3-4fold} families of weighted fourfolds satisfying:
\begin{enumerate}
\item A general member is a quasi-$K3$;
\item The family contains a Fermat hypersurface.  
\end{enumerate}

\par Let $\mathbb W$ be a well formed weighted projective space and 
$s(\mathbb W)$ be the sum of the weights of $\mathbb W$. Let $Z$ be a quasi-smooth closed subvariety of $\mathbb W$. Then, $H^k(Z,\mathbb Q)$ admits a pure Hodge structure of weight $k$. Suppose $Z$ has complex dimension $2n$. 
\begin{definition}
\leavevmode
\begin{itemize}
\item We say that $Z$ is a \emph{numerical $K3$} if $H^{2n}(Z,\mathbb Q)$ is a Hodge structure of level 2 with $h^{n+1,n-1}(Z)=1$. 
\item In the case where $Z=(f=0)$ is a quasi-smooth hypersurface, we say that $Z$ is a \emph{quasi-$K3$} if 
           \begin{equation}
           \frac{1}{2}(\dim_{\mathbb C} Z)\text{\rm deg}(f) 
            = s(\mathbb W).    
           \label{quasi-K3}
           \end{equation}
\end{itemize}
\end{definition}

\begin{lemma} 
Every quasi-$K3$ is a numerical $K3$.
\end{lemma}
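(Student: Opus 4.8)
The plan is to compute the Hodge numbers of $H^{2n}(Z,\mathbb{Q})$ directly via the Griffiths residue calculus for quasi-smooth weighted hypersurfaces. Write $\mathbb{W} = \mathbb{P}(w_0, \dots, w_N)$, so that $\dim_{\mathbb{C}} Z = N - 1 = 2n$ and $s(\mathbb{W}) = \sum_{i=0}^{N} w_i$. Let $d = \deg(f)$ and form the Jacobian ring $R = \mathbb{C}[x_0, \dots, x_N]/J(f)$, where $J(f) = (\partial f/\partial x_0, \dots, \partial f/\partial x_N)$, graded by $\deg x_i = w_i$. The primitive middle cohomology is governed entirely by $R$: the residue map furnishes isomorphisms
\[
H^{N-1-q,\,q}_{\mathrm{prim}}(Z) \cong R_{(q+1)d - s(\mathbb{W})}, \qquad 0 \le q \le N-1 .
\]
Since $Z$ is quasi-smooth and $\mathbb{W}$ is well formed, this is precisely the setting in which the weighted version of Griffiths' residue theorem (Steenbrink, Dolgachev) applies, and the purity of $H^{2n}(Z,\mathbb{Q})$ recorded above lets us read the Hodge decomposition off the graded pieces of $R$.

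First I would dispose of the non-primitive part. The orthogonal complement of $H^{2n}_{\mathrm{prim}}(Z)$ in $H^{2n}(Z)$ is spanned by the restriction of $c_1(\mathcal{O}_{\mathbb{W}}(1))^{n}$, which is algebraic and hence of pure type $(n,n)$; since $H^{2n}(\mathbb{W},\mathbb{Q})$ is one-dimensional, this accounts for the entire ambient contribution. Consequently both the level of $H^{2n}(Z,\mathbb{Q})$ and the number $h^{n+1,n-1}$ are computed on the primitive part alone. Next I translate the defining equality: because $\dim_{\mathbb{C}} Z = 2n$, condition (\ref{quasi-K3}) reads $n\,d = s(\mathbb{W})$. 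Substituting into the displayed isomorphism, the term $q = n-1$ gives
\[
H^{n+1,\,n-1}_{\mathrm{prim}}(Z) \cong R_{\,nd - s(\mathbb{W})} = R_0 = \mathbb{C},
\]
so that $h^{n+1,n-1}(Z) = \dim R_0 = 1$.

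It then remains to check that $H^{2n}(Z,\mathbb{Q})$ has level exactly $2$, i.e. that $h^{p,q}_{\mathrm{prim}} = 0$ whenever $|p-q| > 2$. For $q \le n-2$ the relevant degree is $(q+1)d - s(\mathbb{W}) = (q+1-n)d \le -d < 0$, and $R$ vanishes in negative degrees, so $H^{N-1-q,\,q}_{\mathrm{prim}}(Z) = 0$ for every $p = N-1-q \ge n+2$; Hodge symmetry $H^{p,q}_{\mathrm{prim}} = \overline{H^{q,p}_{\mathrm{prim}}}$ then kills the conjugate pieces with $p \le n-2$. Hence the primitive cohomology is concentrated in types $(n+1,n-1),\,(n,n),\,(n-1,n+1)$ with one-dimensional extremal pieces, which is exactly the assertion that $Z$ is a numerical $K3$. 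The only genuinely delicate point is invoking the residue isomorphism in the weighted, merely quasi-smooth setting: I would cite the weighted Griffiths theory referenced in this appendix, and use well-formedness (together with the standing exclusion of linear cones, which guarantees $R_0 = \mathbb{C}$) to ensure both that $R$ truly vanishes below degree $0$ and that the ambient class of type $(n,n)$ is the entire non-primitive contribution. Everything after that reduces to the degree bookkeeping above.
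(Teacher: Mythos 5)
Your proof is correct and follows essentially the same route as the paper: the Griffiths--Steenbrink--Dolgachev residue isomorphism identifying $H^{2n-j,j}_{\mathrm{prim}}(Z)$ with the graded piece $(R/J(f))_{(j+1)d-s(\mathbb W)}$ of the Jacobian ring, followed by the observation that the quasi-$K3$ condition $nd=s(\mathbb W)$ makes these degrees negative for $j<n-1$ and zero for $j=n-1$. Your additional remarks (the non-primitive part being the type-$(n,n)$ ambient class, Hodge symmetry for the conjugate pieces, and the linear-cone caveat ensuring $(R/J(f))_0=\mathbb C$) are points the paper leaves implicit, but they do not change the argument.
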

\begin{proof} 
We begin by recalling the following general description of the cohomology of a quasi-smooth hypersurface in a weighted projective space~\cite{Dolgachev_wps}:

\par Let $R=\mathbb C[z_0,\dots,z_m]$ where $z_j$ has weight $w_j$, and 
$\mathbb U$ be the associated weighted projective space.  Let 
$d\text{\rm Vol} = dz_0\wedge\cdots\wedge dz_m$ be the projective 
volume form and $E=\sum_{j=0}^m\, w_jz_j\frac{\partial}{\partial z_j}$ be 
the Euler vector field.  Then, $\Omega = i(E)d\text{\rm Vol}$ is 
a homogeneous differential form of degree 
$s(\mathbb U)=\sum_{j=0}^m\, w_j$. If 
$V = (g=0) \subset \mathbb U$ is a quasi-smooth hypersurface, then 
$F^{m-q}H^{m-1}_{\mathrm{prim}}(V)$ is spanned by the Poincar\'e residues 
of the algebraic differential forms 
$\Omega(A) = (A\Omega)/(g^q)$ of homogeneous degree $0$ (where $A$ is 
a homogeneous polynomial on $\mathbb U$).  The residue of $\Omega(A)$ belongs to $F^{m+1-q} H^{m-1}_{\mathrm{prim}}(V)$ if and only if $A$
belongs to the Jacobian ideal $J(g)$ of $g$. In particular, this description of the Hodge filtration of the primitive cohomology of $V$ implies that:
\begin{equation}
         h^{m-1-j,j}_{\mathrm{prim}}(V) 
         = \dim (R/J(g))_{(j+1)\text{\rm deg}(g)-s(\mathbb U)}
         \label{hodge-numbers}
\end{equation}
where $(R/J(g))_{\ell}$ is the subspace of homogeneous forms of degree 
$\ell$ in $R/J(g)$.

\par If $Z\subset\mathbb W$ is a quasi-smooth hypersurface of dimension $2n$ defined by the vanishing of a polynomial $f$ of degree $d$ then application of \eqref{hodge-numbers} with $m=2n+1$ and $s(\mathbb W) = nd$ by \eqref{quasi-K3} implies that 
$
 h^{2n-j,j}_{\mathrm{prim}}(Z) = \dim (R/J(f))_{(j+1-n)d}
$
which is zero for $j<n-1$, and equals to $\dim (R/J(f))_0 = 1$ for $j=n-1$ (N.B. $h^{n+1,n-1}(Z) = h^{n+1,n-1}_{\mathrm{prim}}(Z)$).
\end{proof}

\begin{remark} 
The Fermat surface $Z$ of degree 15 in $\mathbb P(1,3,5,5)$ gives an example of a quasi-smooth hypersurface which is numerically of $K3$ type but not a quasi-$K3$. Indeed, by Equation \eqref{hodge-numbers}, 
$
       h^{2,0}(Z) = \dim (R/J)_{15-14} = 1
$.
\end{remark}

\begin{remark}
Suppose $Z = (f=0)$ is a numerical $K3$ weighted hypersurface which is not a quasi-$K3$. Let $\dim_{\mathbb C} Z = 2n$. Then $H^{n+1, n-1}(Z)$ is generated by the Poincar\'e residue of an algebraic differential form $\Omega(A) = (A \Omega)/(f^{n})$ where $A$ has positive degree. A weighted version of the ``\v{C}echist" residue formula in \cite[\S3.B]{CG_infvhs} allows us to show that the residue of $\Omega(A)$ vanishes along a divisor.
\end{remark}

\begin{remark} 
Each surface appearing on Reid's list (see for instance \cite{Reid_weightedk3} or \cite{Yonemura_wtk3}) of $95$ weighted $K3$ hypersurfaces is a quasi-$K3$.
\end{remark}

\par If $Z=(\sum_j\, z_j^{n_j} = 0)$ is a quasi-$K3$ Fermat (all $n_j>1$) then dividing both sides of \eqref{quasi-K3} by the degree of $f$ implies that
\begin{equation}
        \sum_j\, \frac{1}{n_j} = \frac12 \dim_{\mathbb C}Z    \label{recip-weight}.
\end{equation}
In the case where $Z$ is a surface, this leads to the following $14$ partitions of $1$ into a sum of $4$ unit fractions:
\begin{equation}
\aligned
%\begin{alignat*}{2}
        1 &= \frac12 + \frac13 + \frac17 + \frac{1}{42}  
           =\, \frac12 + \frac13 + \,\frac18\, + \frac{1}{24}  \\
          &= \frac12 + \frac13 + \frac19 + \frac{1}{18}  
           = \frac12 + \frac13 + \frac{1}{10} + \frac{1}{15} \\
          &= \frac12 + \frac13 + \frac{1}{12} + \frac{1}{12} 
           = \frac12 + \frac14 + \frac15 + \frac{1}{20}  \\
          &= \frac12 + \frac14 + \frac16 + \frac{1}{12} 
           = \frac12 + \frac14 + \frac18 + \frac18 \\
          &= \frac12 + \frac15 + \frac15 + \frac{1}{10} 
           = \frac12 + \frac16 + \frac16 + \frac16  \\
          &= \frac13 + \frac13 + \frac14 + \frac{1}{12} 
           = \frac13 + \frac13 + \frac16 + \frac16 \\
          &= \frac13 + \frac14 + \frac14 + \frac16 
           = \frac14 + \frac14 + \frac14 + \frac14.
\endaligned
\label{partition-4}
%\end{alignat*}
\end{equation}

\begin{theorem} \label{quasi-k3-4fold} 
Let $Z=(z_0^{n_0}+z_1^{n_1}+z_2^{n_2}+z_3^{n_3}+z_4^{n_4}+z_5^{n_5}=0)$ be a quasi-$K3$ Fermat fourfold (all $n_j>1$).  Then,
$
      2 = \frac{1}{n_0} + \frac{1}{n_1} + \frac{1}{n_2} + \frac{1}{n_3} + \frac{1}{n_4} + \frac{1}{n_5}
$
is either a sum $\frac12 + \frac12 + \frac1a + \frac1b + \frac1c + \frac1d$, and hence classified by \eqref{partition-4} or a sum $\alpha + \beta$ where $\alpha$ and $\beta$ are one of the following partitions:
\begin{equation}
     1 = \frac13+\frac13+\frac13 = \frac12 + \frac13 + \frac16 = \frac12 + \frac14 + \frac14. \label{partition-3}
\end{equation}
\end{theorem}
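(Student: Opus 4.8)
The plan is to reduce the statement to a purely Diophantine classification and then settle it by an ordered, finite case analysis. By the Fermat quasi-$K3$ relation \eqref{recip-weight}, for a fourfold ($\dim_{\mathbb C} Z = 4$) the six exponents satisfy $\sum_{j=0}^5 \frac{1}{n_j} = 2$ with each $n_j \geq 2$ an integer, so the entire content of the theorem is to classify the ways of writing $2$ as a sum of six unit fractions with denominators $\geq 2$. I would order the denominators as $n_0 \leq n_1 \leq \cdots \leq n_5$, so that $\frac{1}{n_0} \geq \cdots \geq \frac{1}{n_5}$, and repeatedly apply the trivial averaging bound: if $m$ of the remaining unit fractions each have denominator $\geq k$, then their sum is at most $m/k$. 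Making this inequality explicit once and reusing it keeps the analysis short and uniform.

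First I would show $n_0 \leq 3$: if $n_0 \geq 4$ then every term is $\leq \frac14$ and the total is at most $\frac64 < 2$. If $n_0 = 3$, then all six terms are $\leq \frac13$ and sum to $6 \cdot \frac13 = 2$ only when each equals $\frac13$; this yields the all-threes solution $\{3,3,3,3,3,3\}$, which is $(\frac13+\frac13+\frac13)+(\frac13+\frac13+\frac13)$, of the second type with $\alpha=\beta=\frac13+\frac13+\frac13$. In the remaining case $n_0 = 2$ I would split on $n_1$. The averaging bound forces $n_1 \leq 3$ (five terms of denominator $\geq 4$ sum to at most $\frac54 < \frac32$). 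If $n_1 = 2$, the remaining four terms sum to $2-\frac12-\frac12 = 1$, which is exactly the first alternative $\frac12+\frac12+\frac1a+\frac1b+\frac1c+\frac1d$, and the quadruple $(a,b,c,d)$ appears in \eqref{partition-4} by the very definition of that list (it is the complete list of partitions of $1$ into four unit fractions).

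If instead $n_1 = 3$, I would iterate the same bound down the chain. The four terms $\frac1{n_2}+\cdots+\frac1{n_5}=\frac76$ force $n_2=3$ (otherwise the sum is $\leq 1$); then $\frac1{n_3}+\frac1{n_4}+\frac1{n_5}=\frac56$ forces $n_3=3$ (otherwise $\leq \frac34$); and finally $\frac1{n_4}+\frac1{n_5}=\frac12$ with $n_4 \leq n_5$ forces $\frac1{n_4}\geq\frac14$, so $n_4\in\{3,4\}$, giving $(n_4,n_5)=(3,6)$ or $(4,4)$. These are the two solutions $\{2,3,3,3,3,6\}$ and $\{2,3,3,3,4,4\}$, which split as $(\frac12+\frac13+\frac16)+(\frac13+\frac13+\frac13)$ and $(\frac12+\frac14+\frac14)+(\frac13+\frac13+\frac13)$ respectively, both of the second type by \eqref{partition-3}.

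Since the argument is essentially bookkeeping, there is no deep obstacle; the only point requiring care is to verify that the dichotomy is exhaustive and to note the harmless overlap between the two alternatives. Concretely, a solution with at least two denominators equal to $2$ is covered by the first alternative, whereas a solution with at most one $2$ must, by the case analysis above, be one of $\{3,3,3,3,3,3\}$, $\{2,3,3,3,3,6\}$, or $\{2,3,3,3,4,4\}$, each of which decomposes into two of the triples in \eqref{partition-3}. I would conclude by observing that these two families together exhaust all solutions of $\sum_{j=0}^5 \frac{1}{n_j}=2$, which is exactly the assertion of the theorem.
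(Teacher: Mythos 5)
Your proof is correct, and at bottom it uses the same elementary method as the paper: reduce via \eqref{recip-weight} to the Diophantine equation $\sum_{j=0}^{5}\frac{1}{n_j}=2$, order the denominators, and run a descending case analysis with averaging bounds. But the case tree is decomposed differently, and the difference is worth recording. The paper's first dichotomy is on whether the three largest fractions sum to exactly $1$ or to more than $1$; in the latter case it either finds two $\frac12$'s (the \eqref{partition-4} alternative) or identifies the leading triple as one of $(2,3,3)$, $(2,3,4)$, $(2,3,5)$, dismisses the last two by ``similar arguments'' (they are in fact vacuous, by the same averaging bound you use), and for $(2,3,3)$ applies a regrouping trick: it forces $n_3=3$ and then observes that $\frac{1}{n_0}+\frac{1}{n_4}+\frac{1}{n_5}=1$, so the six fractions split into two triples each summing to $1$ --- without ever solving for $(n_4,n_5)$. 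Your version instead cases on the two smallest denominators ($n_0\le 3$, then $n_1\le 3$), which eliminates the vacuous subcases at one stroke and terminates in a complete explicit list of solutions: the two-halves family together with $\{3,3,3,3,3,3\}$, $\{2,3,3,3,3,6\}$, and $\{2,3,3,3,4,4\}$. That explicit enumeration is slightly more information than the paper extracts, and it also makes the logical dependencies lighter: you need only the completeness of the list \eqref{partition-4} (for the two-halves case) and then verify membership in \eqref{partition-3} for the three sporadic solutions, whereas the paper's argument also leans on \eqref{partition-3} being the complete list of three-term unit-fraction partitions of $1$ (both in its first case and after regrouping). Your closing remark on the overlap of the two alternatives (e.g.\ $\{2,2,3,3,6,6\}$ is covered by both) is a correct observation that the paper leaves implicit.
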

\begin{proof} By reordering the variables as necessary, we assume that $1 > \frac{1}{n_0} \geq \dots \geq \frac{1}{n_5}$. Then, either $\frac{1}{n_0} + \frac{1}{n_1} + \frac{1}{n_2} = 1$ or $\frac{1}{n_0} + \frac{1}{n_1} + \frac{1}{n_2}>1$. In the first case, we have a partition $2=\alpha + \beta$ with $\alpha$, $\beta$ from \eqref{partition-3}. In the second case, either $\frac{1}{n_0} = \frac{1}{n_1} = \frac12$ or $\frac{1}{n_0}+\frac{1}{n_1}<1$. If $\frac{1}{n_0} = \frac{1}{n_1}=\frac12$ the remaining fractions give a partition from \eqref{partition-4}. If $\frac{1}{n_0}+\frac{1}{n_1}<1$ then $\frac{1}{n_0}+\frac{1}{n_1} + \frac{1}{n_2}$ equals one of the following sums: $\frac12+\frac13+\frac13 = \frac76$, $\frac12+\frac13+\frac14=\frac{13}{12}$ or $\frac12+\frac13+\frac15 = \frac{31}{30}$. If $\frac{1}{n_0} + \frac{1}{n_1} + \frac{1}{n_2} = \frac76$ then $\frac{1}{n_3} + \frac{1}{n_4} + \frac{1}{n_5} = \frac56$. This forces $\frac{1}{n_3} = \frac13$.  But then $\frac{1}{n_0} + \frac{1}{n_4} + \frac{1}{n_5}=1$. Similar arguments work for the other two cases. 
\end{proof}

\par Altogether, \theoremref{quasi-k3-4fold} produces 17 families of quasi-smooth weighted fourfolds which are summarized in Table \ref{wps-table}. For every family, a general hypersurface is a quasi-$K3$ and there exists a Fermat member.

\begin{table}[h!]
\centering
\begin{tabular}{|c|c|c|c||c|c|c|c|}
\hline
Case & WPS & $\deg$ & $h_{\mathrm{prim}}^{2,2}$ & Case & WPS & $\deg$ & $h_{\mathrm{prim}}^{2,2}$    \\ \hline \hline
N1 & $\bP(1,1,1,1,1,1)$ & $3$ & $20$ & N10 & $\bP(1,2,6,9,9,9)$ & $18$ & $14$  \\ \hline 
N2 & $\bP(1,2,2,2,2,3)$ & $6$ & $14$ & N11 & $\bP(1,2,3,6,6,6)$ & $12$ & $13$  \\ \hline
N3 & $\bP(3,3,4,4,4,6)$ & $12$ & $2$ & N12 & $\bP(1,3,8,12,12,12)$ & $24$ & $12$   \\ \hline
N4 & $\bP(1,1,1,1,2,2)$ & $4$ & $19$ & N13 & $\bP(1,3,4,4,6,6)$ & $12$ & $10$  \\ \hline
N5 & $\bP(1,1,1,3,3,3)$ & $6$ & $19$ & N14 & $\bP(1,4,5,10,10,10)$ & $20$ & $10$  \\ \hline
N6 & $\bP(1,1,4,6,6,6)$ & $12$ & $18$ & N15 & $\bP(1,6,14,21,21,21)$ & $42$ & $10$  \\ \hline
N7 & $\bP(1,1,2,4,4,4)$ & $8$ & $17$ & N16 & $\bP(2,3,3,4,6,6)$ & $12$ & $8$  \\ \hline
N8 & $\bP(1,1,2,2,3,3)$ & $6$ & $16$ & N17 & $\bP(2,3,10,15,15,15)$ & $30$ & $8$  \\ \hline
N9 & $\bP(1,2,2,5,5,5)$ & $10$ & $14$ &  &  & &   \\ \hline
\end{tabular}
\caption{}
\label{wps-table}
\end{table}

\par Cases N1 (cubic fourfolds), N2 (studied in this paper) and N3 are new to dimension $4$. The families N4-N17 have essentially appeared in Reid's list of $95$ weighted $K3$ hypersurfaces (cf. \cite{Reid_weightedk3} or \cite{Yonemura_wtk3}) by dropping the last two weights. (N.B. Imposing the existence of a Fermat member, as we do here, reduces the list of weighted $K3$ hypersurfaces to $14$ (vs. $95$) cases; see for example \cite[Table 2.2]{Yonemura_wtk3}, the relevant cases being the first $14$ rows.)

\begin{remark} 
The degree $12$ Fermat hypersurface in $\mathbb P(2,3,3,4,4,6)$ is an example of numerical $K3$ type fourfold which is not a quasi-$K3$. A general hypersurface of degree $20$ in $\mathbb P(1,4,5,5,10,15)$ is a quasi-$K3$ but the family of fourfolds of this type does not contain a Fermat type member. In fact, there are infinitely many families of quasi-$K3$s without a Fermat member; for example, the degree $2^d$ hypersurfaces in $\bP(2^{d-2}, 2^{d-2}, 2^{d-2}, 2^{d-2}, 2^{d-1} - e, 2^{d-1}+e)$ with $d \gg 0$ and $e > 0$. 
\end{remark}

\begin{remark} 
Let $Z\subset\mathbb W$ be a quasi-smooth hypersurface appearing in Table \eqref{wps-table}.  Then, there exists a weight $w$ of $\mathbb W$ such that $Z$ intersects the general member of $|\mathcal O_{\mathbb W}(w)|$ in a quasi-smooth threefold $V$ for which $H^{3,0}(V)=0$ and $H^{2,1}(V)$ has dimension equal to $|\mathcal O_{\mathbb W}(w)|$. In the case N1, take $w=1$ to obtain a $5$-dimensional family of cubic threefolds $V$.  In the case N2, the family of degree $2$ hypersurfaces in $\mathbb W$ has dimension $4$, and $V$ has $h^{2,1}=4$ by applying \eqref{hodge-numbers} to the threefold of the same degree as $Z$ in the weighted projective space $\mathbb U$ obtained by omitting one copy of the weight $w=2$ from $\mathbb W$.  In the case N3, the family of degree $4$ hypersurfaces in $\mathbb W$ has dimension $2$ and $V$ has $h^{2,1}=2$.
\end{remark}

\begin{remark} 
More generally, if $Z$ is an almost K\"ahler $V$-manifold then $H^k(Z,\mathbb Q)$ carries a pure Hodge structure of weight $k$. Accordingly, we say that $Z$ is a \emph{numerical $K3$} if the rational cohomology of $Z$ coincides with projective space except in the middle dimension (say $\dim_{\mathbb C} Z = 2n$), which is of Hodge level $2$ with $h^{n+1,n-1}=1$.
\end{remark}

\bibliography{ref}
\end{document}